\documentclass{article}

\usepackage[english]{babel}

\usepackage[letterpaper,top=2cm,bottom=2cm,left=3cm,right=3cm,marginparwidth=1.75cm]{geometry}

\usepackage{amsmath}
\usepackage{amsthm}
\usepackage{graphicx}
\usepackage[colorlinks=true, allcolors=blue]{hyperref}
\usepackage{amssymb}
\usepackage{tikz}
\usetikzlibrary{positioning}
\usepackage[utf8]{inputenc}
\usepackage[T1]{fontenc}
\usepackage{tcolorbox}
\usepackage{enumitem}
\usepackage{multicol}
\renewcommand{\arraystretch}{1.5} 
\usepackage[table]{xcolor} 
\usepackage{array} 
\usetikzlibrary{arrows}
\usetikzlibrary{arrows.meta}
\usepackage{cmap}
\usepackage{subcaption}

\usepackage{svg}

\newtheorem{theorem}{Theorem}[section]
\newtheorem{corollary}[theorem]{Corollary}
\newtheorem{conjecture}[theorem]{Conjecture}
\newtheorem{lemma}[theorem]{Lemma}
\newtheorem{problem}[theorem]{Problem}
\newtheorem{question}[theorem]{Question}

\newcounter{example}
\renewcommand{\theexample}{\Alph{example}}

\usepackage{xcolor}

\newcommand{\vu}[0]{\mathbf{u}}
\newcommand{\vv}[0]{\mathbf{v}}

\newcommand{\vn}[0]{\mathbf{n}}
\newcommand{\vc}[0]{\mathbf{c}}

\newcommand{\vx}[0]{\mathbf{x}}
\newcommand{\vy}[0]{\mathbf{y}}
\newcommand{\R}[0]{\mathbb{R}}
\newcommand{\Pb}[0]{\mathbb{P}}

\newcommand{\Kc}[0]{\mathcal{K}}

\newcommand{\Bc}[0]{\mathcal{B}}

\newcommand{\skel}[0]{\mathtt{skel}}
\newcommand{\bd}[0]{\mathtt{bd}}
\newcommand{\diam}[0]{\mathtt{diam}}
\newcommand{\diag}[0]{\mathtt{diag}}
\newcommand{\ess}[0]{\mathtt{ess}}
\newcommand{\vertices}[0]{\mathtt{vert}}
\newcommand{\cone}[0]{cone}

\newcommand{\qt}{\mathcal{Q}}
\newcommand{\gr}{\mathtt{gr}}
\newcommand{\id}{\mathtt{id}}

\title{Geometric Realizations with Strong Self-Duality \\[0.5em]
\large Part II: Diameter Graphs, Reuleaux Polyhedra, and Thrackles}

\author{Gábor Damásdi\footnote{Supported by the ERC Advanced Grant no. 882971 GeoScape, by the János Bolyai Research Scholarship of the Hungarian Academy of Sciences, the STARTING-25 project no. 152199 and the EXCELLENCE-24 project no. 151504 Combinatorics and Geometry of the NRDI Fund.} \\ \small HUN-REN Alfr\'ed R\'enyi Institute of Mathematics,\\ \small ELTE Eötvös Loránd University, Budapest }

\begin{document}
\maketitle
\begin{abstract}
 In this paper we construct new examples of diameter graphs and Reuleaux polyhedra in $\mathbb{R}^3$, obtaining a full characterization of their combinatorial structure. For a finite set of points $X\subset\mathbb{R}^d$, its diameter graph is the graph on vertex set $X$ where pairs forming a diameter pair are connected by an edge. Grünbaum, Heppes and Straszewicz independently proved that the diameter graph of $X\subset \R^3$ has at most $2|X|-2$ edges, answering a question of Vázsonyi. Their proof relied on ball polytopes. The ball polytope $\mathcal{B}(X)$ is the intersection of the unit balls centered at the points of $X$. 
 
 We call a ball polytope a Reuleaux polyhedron if the centers form a family with $2|X|-2$ diameter pairs. Kupitz, Martini and Perles showed that the skeleton of a Reuleaux polyhedron must be a 2-connected strongly involutive self-dual graph. They conjectured that in the simple 3-connected case this is also sufficient. We not only confirm this conjecture, but we show that any 2-connected (not necessarily simple) strongly involutive self-dual graph arises as the skeleton of a Reuleaux polyhedron. 
 
To construct the new Reuleaux polyhedra we construct new diameter graphs.  It was known that any 3-dimensional diameter graph is a subgraph of a non-bipartite quadrangulation of the projective plane. We show that the reverse holds. That is, for any such graph we construct a diameter realization.  This also confirms and strengthens a conjecture of Montejano, Pauli, Raggi, Roldán-Pensado on metric embeddings of strongly involutive self-dual graphs. The construction relies on ideas from rigidity theory.

We also discuss a  number of applications of these results, such as the construction of bodies of constant width and connections to Steinitz’s theorem and Borsuk's conjecture.
 \end{abstract}

\section{Introduction}

In the first part of this series we studied the correspondence between 2-connected strongly involutive self-dual maps (SISD maps) and  simple non-bipartite quadrangulations of the projective plane (NBQP maps). In this paper, expanding the work of Kupitz, Martini and Perles \cite{kupitz2010ball}, we  show how this correspondence naturally arises in extremal diameter graphs in $\R^3$ and skeletons of the corresponding Reuleaux polyhedra. Furthermore, we show that any non-bipartite quadrangulation map gives rise to an extremal diameter graph and hence any 2-connected strongly involutive self-dual map is the skeleton of a Reuleaux polyhedron. This settles a question of Kupitz-Martini-Perles and confirms a conjecture of Montejano, Pauli, Raggi, Roldán-Pensado.  

\subsection{Summary of Part I}

In Part I of the series we studied simple non-bipartite quadrangulations of the projective plane and strongly involutive self-dual graphs and showed a natural correspondence between them. A simple non-bipartite quadrangulation (NBQP for short) is a 2-cell embedding $Q$ in the projective plane of a simple non-bipartite graph such that each face has size four. The diagonal map $\diag(Q)$ of $Q$ is a planar map, constructed as follows. We lift $Q$ to the sphere and obtain a planar embedding $\widetilde Q$ of the canonical double cover $\gr(Q)\times K_2$ of $Q$. This cover is a planar quadrangulation, hence it is bipartite. Then $\diag(Q)$ is drawn by selecting one of the color classes and drawing the diagonal spanned by the selected color class in each quadrangle. 

If $G=(V,E,F)$ is a self-dual planar map, we say that the  self-duality  $\tau_G:V\cup E\cup F \rightarrow V\cup E\cup F$ is \emph{strongly involutive} if it satisfies the following conditions:
\begin{enumerate}
    \item[1)]  The self-duality is an involution, i.e., $\tau_G^2=\id$.
    \item[2)]  For every vertex $u\in V$ we have $u \notin \tau_G(u)$.
\end{enumerate}
Planar maps that have a strongly involutive self-duality are known as strongly involutive self-dual maps, SISD maps for short.
The diagonal map of an NBQP is always a 2-connected SISD map. 

In the other direction, consider a planar map $G=(V,E,F)$. One can construct its radial map $R(G)$, also known as its vertex-face incidence map, as follows. The vertex set is $V\cup F$. We place each face-vertex in the corresponding face of $G$. The edges are given by face-vertex incidences, that is, we connect each face-vertex to the vertices on the boundary of the corresponding face. The radial map is always a planar quadrangulation. If $G$ is strongly involutive self-dual, then the quadrangulation  has an automorphism given by the self-duality of the original graph. As the self-duality is strongly involutive, this will be a fixed point free automorphism and we can factor the map by this automorphism. We denote the quotient map by $\overline R(G):=R(G)/\overline{\tau_G}$; it is a quadrangulation of the projective plane. There can be only one strongly involutive self-duality, hence this is well defined. The main result of Part I states that these operations are inverse.

\begin{theorem}[SISD--NBQP correspondence]\label{thm:commute} The two operations, $\diag()$ and $\overline R()$, are inverse bijections between the family of simple non-bipartite quadrangulations of the projective plane  and the family of 2-connected strongly involutive self-dual maps. They satisfy the following properties.  
\begin{enumerate}
    \item[a)] If $G$ is a 2-connected strongly involutive self-dual map, then $G=\diag(\overline R(G))$ as planar maps.
    \item[b)] If $Q$ is a non-bipartite quadrangulation of the projective plane, then $Q=\overline R(\diag(Q))$ as planar maps.
    \item[c)] If $Q$ is a non-bipartite quadrangulation of the projective plane with cover map $\widetilde Q$, then $\widetilde Q= R(\diag(Q))$ as projective planar maps.

\end{enumerate}
    \end{theorem}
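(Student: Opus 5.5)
Both directions of the theorem run through the sphere cover, so I would organize the proof around the radial map $R(G)$ and the lift $\widetilde Q$.

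\textbf{Part (a).} Let $G=(V,E,F)$ be a 2-connected SISD map with strongly involutive self-duality $\tau_G$. The radial map $R(G)$ is a planar quadrangulation with bipartition $V\cup F$. Its faces correspond bijectively to the edges of $G$: the face for $e=uv$ has boundary $u,f,v,g$, where $f,g$ are the two faces incident to $e$. Two points need care.
\begin{itemize}
\item \emph{Simplicity of $R(G)$.} Two-connectedness is what ensures this: every face boundary is a cycle, so no vertex appears twice on a face.
\item \emph{Action of $\overline{\tau_G}$.} The map $\overline{\tau_G}$ sends $V$ to $F$ and $F$ to $V$, preserves incidences, and so is an automorphism of $R(G)$ swapping the colour classes. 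Condition~1) makes it an involution. Condition~2) says $u\notin\tau_G(u)$, which rules out fixed vertices, fixed edges $u\,\tau_G(u)$, and fixed faces. The quotient $\overline R(G)$ is therefore a quadrangulation of the projective plane, and $R(G)$ is its orientation double cover.
\end{itemize}
$\overline R(G)$ is non-bipartite because the deck involution swaps the two colour classes of the cover. Its simplicity follows from condition~2) together with simplicity of $R(G)$: a loop or double edge downstairs would force an edge $u\,\tau_G(u)$ or a pair of faces identified improperly.

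Next, lifting $\overline R(G)$ back to the sphere returns $R(G)$, which proves (c) for $Q=\overline R(G)$. Choosing the colour class $V$ and drawing the diagonal $uv$ in each quadrangle $u,f,v,g$ recovers exactly the edge $e$, with the cyclic orders inherited from $R(G)$. This gives $\diag(\overline R(G))=G$ as maps. Choosing $F$ instead yields $G^*$, which is isomorphic to $G$ via $\tau_G$, so $\diag$ is well defined.

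\textbf{Parts (b) and (c).} Take an NBQP $Q$ with lift $\widetilde Q$, a spherical quadrangulation with bipartition $A\cup B$ and deck involution $\sigma$. Since $Q$ is non-bipartite, its lift is connected and $\sigma$ swaps $A$ and $B$. Let $D=\diag(Q)$ be drawn on $A$. Each vertex $b\in B$ sits inside a region bounded by the diagonals of the quadrangles around $b$, so $B$ is identified with the faces of $D$. Under this identification $\widetilde Q$ is exactly the vertex–face incidence map $R(D)$, which proves (c).

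Now $\sigma$ induces a duality $\tau_D$ on $D$:
\begin{itemize}
\item a vertex $a$ is sent to the face $\sigma(a)$;
\item an edge, which is the diagonal of a quadrangle $\{a,b,a',b'\}$, is sent to the dual diagonal of $\sigma$ of that quadrangle.
\end{itemize}
Incidence reversal comes from $\sigma$ being a map automorphism, and $\tau_D^2=\mathrm{id}$ because $\sigma^2=\mathrm{id}$. The condition $a\notin\tau_D(a)$ means $a$ is not adjacent to $\sigma(a)$ in $\widetilde Q$. If it were, that edge would project to a loop in $Q$, contradicting simplicity.

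Two-connectedness of $D$ comes from $\widetilde Q=R(D)$ being simple: a cut vertex of $D$ would appear twice on a face boundary, producing a multiple edge in $R(D)$. Finally $\overline R(D)=\widetilde Q/\sigma=Q$.

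\textbf{Bijectivity and the main obstacle.} Combining (a) and (b) gives inverse bijections, using the paper's remark that a SISD map has a unique strongly involutive self-duality. The step I expect to be the main obstacle is the equivalence \emph{simple $Q$ $\Leftrightarrow$ 2-connected $G$} in both directions. I would handle it by a careful local analysis of the ways a loop or multi-edge in the quotient can arise: an edge $u\,\tau(u)$, or two quadrangles of $R(G)$ sharing two vertices. Each case would be translated into either a cut vertex or a violation of condition~2).
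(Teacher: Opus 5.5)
Your proposal is correct and takes the same approach as the constructions the paper describes for this correspondence: identify the spherical lift $\widetilde Q$ with $R(\diag(Q))$, read the deck involution as the strongly involutive self-duality of $\diag(Q)$, and recover the quadrangulation as $R(G)/\overline{\tau_G}$. The step you flag as the main obstacle needs no case analysis of quadrangles (and ``two quadrangles sharing two vertices'' is not the right mechanism), since it is a one-line lifting argument: a quotient edge joining $v$ and $w$ has exactly one lift of the form $v\,\tau_G(w)$, so a loop or parallel pair in $\overline R(G)$ gives an edge $v\,\tau_G(v)$ or a parallel pair in $R(G)$ (i.e.\ a vertex repeated on a facial walk, hence a cut vertex of $G$), and the same argument applied to the lifts $a_1b_2$, $a_2b_1$ of an edge $ab$ shows that $\widetilde Q$ is simple whenever $Q$ is, which you use without proof.
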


This gives a very strong correspondence between the properties of SISD maps and NBQP maps, which are summarized in Table \ref{tab:nbqp-sisd-dictionary}.

 \begin{table}[ht]
\centering
\renewcommand{\arraystretch}{1.25}
\begin{tabular}{c|c}
\textbf{NBQP side} & \textbf{SISD side} \\
\hline
Non-bipartite quadrangulation $Q$ of $\Pb^2$
&
Reduced radial map $\overline R(G)$
\\
Diagonal map  $\diag(Q)$
&
2-connected strongly involutive self-dual planar map $G$
\\

Canonical double cover map $\widetilde Q$
&
Radial map $R(G)$
\\

A face quadrangle of $Q$
&
A primal-dual edge pair $e,e^*$ of $G$
\\

The two diagonal classes of $\widetilde Q$
&
The primal and dual edge sets of $G$
\\

No separating $4$-cycle in $Q$
&
3--connectivity of $\diag(Q)$
\\

A separating $4$-cycle in $Q$
&
A $2$-cut in $\diag(Q)$
\\

\end{tabular}
\caption{Dictionary between non-bipartite quadrangulations of the projective plane and 2-connected strongly involutive self-dual planar maps.}
\label{tab:nbqp-sisd-dictionary}
\end{table}

\subsection{New results}
Borsuk conjectured in 1932 that any bounded subset $X$ of the space $\R^d$ can be partitioned into $(d + 1)$ sets, each of which has diameter smaller than that of $X$ \cite{borsuk1933drei}. Although it was shown to be false by Kahn and Kalai \cite{kahn1993counterexample}, this question has remained a central question in discrete geometry ever since. See \cite{Kalai_2015} for a survey of the topic. 

Let $\diam(X)$ denote the diameter of the set $X$. For a finite $X\subset \R^d$, the \emph{diameter graph} $\Gamma(X)$ is the graph on vertex set $X$ where pairs forming a diameter pair are connected. For finite $X$, Borsuk's question is equivalent to determining the chromatic number of diameter graphs in $\R^d$. 

In $\R^2$ diameter graphs are well understood. 
Hopf and Pannwitz showed that a cycle in a diameter graph in the plane is necessarily odd \cite{ERD46,hopf1934aufgabe}.  Sutherland and Fenchel showed that any diameter graph in $\R^2$ has at most $n$ edges, and this is attainable for $n\ge 3$. Their results imply  the following characterization of diameter graphs in $\R^2$. A graph $G$ is a \emph{odd hairy cycle} if it can be obtained by attaching degree 1 vertices to the vertices of an odd cycle.   A graph $G$ is a diameter graph in $\R^2$ if and only if it is a subgraph of an odd hairy cycle and it has at least one edge (see, for example, Figure \ref{fig:2dexample}). 
\begin{figure}[!ht]
    \centering
    \includegraphics[width=0.2\linewidth]{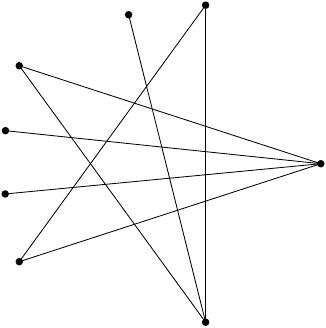}
    \includegraphics[width=0.2\linewidth]{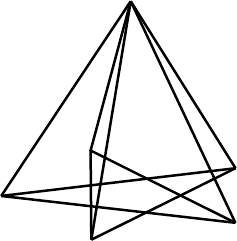}
    \caption{A maximal diameter graph in $\R^2$ and in $\R^3$.}
    \label{fig:2dexample}
\end{figure}

Vázsonyi conjectured that any diameter graph in $\R^3$ has at most $2n-2$ edges \cite{ERD46}. This was shown to be true independently by Grünbaum, Heppes, and Straszewicz using ball polytopes \cite{GRU56,HEP56, STR57}. Alternative proofs were given by Kupitz, Martini, Perles \cite{kupitz2010ball}, Pinchasi \cite{ROM08}, and Swanepoel \cite{sw08}. Dolnikov showed that in a diameter graph in $\R^3$ any two odd cycles have a common vertex \cite{DOL00}.  Diameter graphs are also studied in higher dimensions, see for example \cite{erdHos1959sets,Kupavskii2015,swanepoel2009unit,KupavskiiPolyanskii2017Schur}.

In this paper we focus on the three-dimensional case. We say that $X\subset \R^3$ is an \emph{extremal set} for the Vázsonyi problem if $\diam(X)=1$, $|X|=n$ and the diameter graph has $2n-2$ edges. The most detailed analysis of extremal sets was done in \cite{kupitz2010ball}. They showed that extremal sets are closely connected to a special family of ball polytopes. Let $B(x)$ denote the unit ball centered at $x$. Let $\Bc(X):=\bigcap_{x\in X} B(x)$, that is $\Bc(X)$ is the intersection of the unit balls centered at the points of $X$. If $X$ is finite, we say that $\Bc(X)$ is a \emph{ball polytope}, that is a ball polytope is the intersection of finitely many unit balls.  Ball polytopes play a key part in several important problems in discrete geometry, see \cite{bezdek2007ball,sallee1970reuleaux,bezdek2002pushing,bezdek2026selected}. If $X$ is extremal, then we say that $\Bc(X)$ is an \emph{Reuleaux polyhedron}. We must emphasize that there are several different notions in the literature under this name. Here we decided to follow the convention of Hynd \cite{hynd2024density}. This is somewhat different from the choice of  Montejano and Roldán-Pensado \cite{MON17} and Sallee \cite{sallee1970reuleaux}, who require various additional conditions from $X$, not just extremality.

The boundary of the ball polytope $\Bc(X)$ is similar to the boundary of a polytope: it has spherical facets, circular edges and vertices. (We also allow dangling vertices; see Section \ref{sec:cellstructure} for details.) The skeleton $\skel\Bc(X)$ of $\Bc(X)$ is the map formed by the vertices and edges of the boundary with its natural planar embedding. The skeleton is always planar, but, contrary to the usual convex 3-polytopes, it is not always simple or 3-connected, and the face structure is not necessarily a lattice.  The skeleton of a ball polytope is always 2-connected; furthermore, if $X$ is extremal for the Vázsonyi problem, then $\skel \Bc(X)$ is a strongly involutive self-dual graph \cite{kupitz2010ball}.

Our first contribution is that we show that the correspondence from Part I naturally arises in the setting of extremal sets and Reuleaux polyhedra. We show that if $X$ is extremal, then the diameter graph $\Gamma(X)$ has a natural embedding $\qt(X)$ in the projective plane as a non-bipartite quadrangulation. We will call $\qt(X)$ the \emph{diameter quadrangulation of $X$}. See Figure \ref{fig:diamquadex} and Appendix \ref{app:examples} for examples. This idea is implicit in the literature, see \cite{kupitz2010ball}, but it will be very useful to formulate it explicitly. 

\begin{figure}[!ht]
    \centering
    \includegraphics[width=0.2\linewidth]{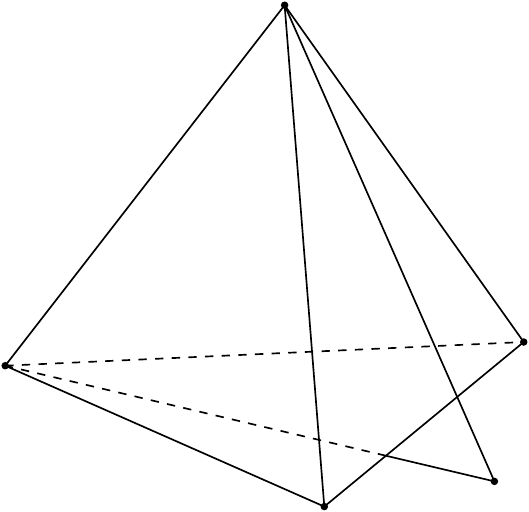}
    \includegraphics[width=0.2\linewidth]{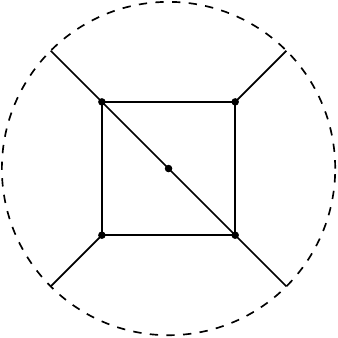}
    \caption{An extremal set $X$ in $\R^3$ and the corresponding quadrangulation $\qt(X)$.}
    \label{fig:diamquadex}
\end{figure}

We define $\qt(X)$ as the reduced radial map $\overline R(\skel\Bc(X))$. Since $\skel\Bc(X)$ is 2-connected and strongly involutive self-dual, this is indeed a quadrangulation. Then, in Section \ref{sec:cellstructure}, we show that the graph of $\qt(X)$ is  the diameter graph of $X$. A good image to keep in mind is that a diameter pair $(x,y)$ encodes that the face $F_x$ of $\Bc(X)$ that is centered at $x$ is incident to the vertex $y$ and equivalently the face $F_y$ is incident to $x$. Hence, the diameter graph is the graph of the reduced radial map of the skeleton of $\Bc(X)$. Furthermore, the embedding of the reduced radial map can also be found in the geometric setting in the following way. Consider a point $p\in X$ in an extremal set. The neighbors of $p$ in $\Gamma(X)$ span a pointed cone with apex at $p$; we call this the neighborhood cone. Each neighbor of $p$ determines an extremal vector of this cone. We will see that the order of the neighbors of $p$ in $\qt(X)$  matches the order of the extremal vectors around the neighborhood cone at $p$. (See Lemma \ref{lem:diameterispolyhedral} for details).

The definition of $\qt(X)$ and Theorem \ref{thm:commute} immediately implies the following.

\begin{corollary}\label{cor:skeldiagonbody}
    If $X$ is an extremal set of $n\ge 4$ distinct points, then the  skeleton of the corresponding Reuleaux polyhedron is the same as the diagonal map of the corresponding diameter quadrangulation. In other words, $\skel(\Bc(X))=\diag(\qt(X))$ as planar maps.
\end{corollary}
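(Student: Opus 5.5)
The corollary follows by unwinding the definition of $\qt(X)$ and applying Theorem~\ref{thm:commute}\,a). By definition, $\qt(X)=\overline R(\skel\Bc(X))$. So the claim $\skel(\Bc(X))=\diag(\qt(X))$ reads $G=\diag(\overline R(G))$ with $G:=\skel\Bc(X)$. This is exactly part a) of Theorem~\ref{thm:commute}, provided $G$ is a 2-connected strongly involutive self-dual map. The proof therefore consists of checking these hypotheses for $G$ and checking that $\overline R(G)$ is well defined.

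First, I would invoke the result of Kupitz, Martini and Perles \cite{kupitz2010ball} recalled above. For a ball polytope, $\skel\Bc(X)$ is 2-connected, and when $X$ is extremal it is strongly involutive self-dual. The self-duality is the natural one: it sends the face $F_x$ centered at $x\in X$ to the vertex $x$ of $\Bc(X)$, and vice versa, so that every point of an extremal set is a vertex of $\Bc(X)$. Second, as recalled in the summary of Part~I, a map admits at most one strongly involutive self-duality. Hence $\overline R(G)=R(G)/\overline{\tau_G}$ does not depend on any choice, and $\qt(X)$ is a well-defined projective quadrangulation. With these two facts, Theorem~\ref{thm:commute}\,a) gives $G=\diag(\overline R(G))=\diag(\qt(X))$ as planar maps.

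The only point needing care is the hypothesis $n\ge 4$ and possible degenerate cell structures, such as dangling vertices or faces whose boundary is not a simple cycle. The bijection in Theorem~\ref{thm:commute} is stated for 2-connected SISD maps, which in Part~I correspond to \emph{simple} NBQPs. So I would check that for $n\ge4$ the skeleton is a genuine 2-connected planar map with at least two faces, meaning $\Bc(X)$ is not a lens coming from $n=2$ or a degenerate configuration. For $n=2$ the diameter graph has one edge rather than $2n-2=2$, so no extremal set exists. For $n=3$ one needs three edges, namely an equilateral triangle, whose ball polytope has a degenerate skeleton. For $n\ge4$, extremality forces $X$ to be non-coplanar, since a planar diameter graph has at most $n<2n-2$ edges. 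Then $\Bc(X)$ has nonempty interior and at least four spherical faces, so its skeleton is a bona fide map to which Part~I applies.

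I expect this bookkeeping about degenerate cells, rather than any new geometry, to be the main obstacle. If a subtlety remains, for example dangling vertices in the cell structure of Section~\ref{sec:cellstructure}, I would handle it by noting that Theorem~\ref{thm:commute} is stated for maps rather than simple graphs. Dangling vertices are then allowed as long as 2-connectivity, in the map sense of Part~I, holds.
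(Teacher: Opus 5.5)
Your proposal is correct and follows the paper's argument: $\qt(X)$ is by definition $\overline R(\skel\Bc(X))$, the skeleton is 2-connected (Theorem~\ref{thm:2connect}, since extremal sets are tight) and strongly involutive self-dual (Corollary~\ref{cor:canonicalissisd}), so Theorem~\ref{thm:commute}\,a) gives $\skel\Bc(X)=\diag(\qt(X))$. The extra bookkeeping about degenerate small cases is unnecessary because the hypothesis $n\ge 4$ is already built into the cited results, and it contains a slip: for $n=3$ extremality would require $2n-2=4>3$ diameters, so no extremal triangle exists at all.
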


This implies the correspondence in Table \ref{tab:extremal-ball polytope-dictionary}.

\begin{table}[ht]
\centering

\renewcommand{\arraystretch}{1.25}
\begin{tabular}{c|c}
\textbf{Extremal set / diameter side}
&
\textbf{Reuleaux polyhedra }
\\
\hline

Extremal point set $X\subset \mathbb R^3$
&
Ball polytope $\Bc(X)$
\\

Diameter pair $pq$
&
Vertex--facet incidences $p\in F_q \Leftrightarrow  q\in F_p$
\\

Diameter quadrangulation $\qt(X)$
&
Reduced radial map $\overline R(\skel(\Bc(X)))$
\\

Face quadrangle of $\qt(X)$
&
Dual pair of edges in $\skel(\Bc(X))$
\\
$\diag(\qt(X))$
&
Skeleton $\skel(\Bc(X))$
\\

\end{tabular}
\caption{Dictionary between extremal diameter configurations and their associated ball polytopes.}
\label{tab:extremal-ball polytope-dictionary}
\end{table}

Secondly, we give a full characterization of skeletons of Reuleaux polyhedra. Kupitz et al. formulated the following natural question (Question 2 in \cite{kupitz2010ball}). Which self-dual simple 3-connected planar graphs are combinatorially equivalent to the cell structure of $\Bc(X)$ for some extremal $X$? They conjectured the following.

\begin{conjecture}[Conjecture 9.1 in \cite{kupitz2010ball}]\label{conj:kupitz}
A necessary and sufficient condition for the situation that the face structure of a $3$-polytope $P$ be isomorphic to the face structure of $\Bc(X)$ for some extremal configuration $X \subset \mathbb{R}^3$ is that $P$ is strongly involutive self-dual.
\end{conjecture}

 We confirm their conjecture in a much stronger form\footnote{They mention that there will be a subsequent paper where they construct new families of Reuleaux polyhedra, and they also remark that there is a connection to negatively self-polar polytopes and give some ideas for generating these. As far as the author of this paper knows, the announced paper never appeared. We also note that we are following a different path for solving the conjecture.  
}. We show  that strongly involutive self-duality is also sufficient in the non-simple and only 2-connected setting.

\begin{theorem}\label{thm:main_skeleton}
For every planar (not necessarily simple) 2-connected strongly involutive self-dual map $G$ there is an $X\subset \R^3$ such that $X$ is extremal for the Vázsonyi problem and the skeleton of the Reuleaux polyhedron $\Bc(X)$  is isomorphic to $G$ as a map. 
\end{theorem}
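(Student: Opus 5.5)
The plan is to reduce Theorem \ref{thm:main_skeleton} to a statement about diameter graphs, using the correspondence of Theorem \ref{thm:commute} together with Corollary \ref{cor:skeldiagonbody}.

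Let $G$ be a 2-connected SISD map. By Theorem \ref{thm:commute}, $Q:=\overline R(G)$ is a simple non-bipartite quadrangulation of the projective plane, and $\diag(Q)=G$. It therefore suffices to prove a realization theorem: for every NBQP $Q$ there is an extremal set $X\subset\R^3$ with $\qt(X)=Q$ as projective planar maps. Given this, Corollary \ref{cor:skeldiagonbody} yields
\[
\skel\Bc(X)=\diag(\qt(X))=\diag(Q)=G .
\]
Small cases ($n\le 3$, or degenerate maps with loops or multiple edges in $G$) are checked directly, for example via the tetrahedron and Reuleaux-type configurations. The main body of the proof is thus the realization theorem. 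Note that matching only the diameter graph is not enough; the rotation system must match as well. Lemma \ref{lem:diameterispolyhedral} supplies this, since the embedding $\qt(X)$ is read off from the cyclic order of the extremal rays of the neighborhood cones.

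To prove the realization theorem I would argue inductively over a generation scheme for NBQPs. Every NBQP can be reduced to a base case by a small set of local operations, namely face contractions (collapsing a quadrangle along a diagonal) and the removal of degree-2 or degree-3 vertices. The base case is $K_4$ embedded as a quadrangulation of $\Pb^2$, which is realized by the regular tetrahedron of side $1$. The inductive step reverses one operation: starting from an extremal realization of the smaller quadrangulation, I would add or split a point so that exactly the required new diameter pairs appear. The new point must lie at distance $1$ from its prescribed neighbors and at distance strictly less than $1$ from all other points. For a vertex split this means perturbing the configuration so that an old point separates into two points. The edge count $2n-2$ is then automatic from Euler's formula for quadrangulations of $\Pb^2$, which give $e=2n-2$.

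The main obstacle is the splitting step. The realization is a framework with edge-length constraints $|x-y|=1$ on the edges and strict inequalities elsewhere. A solution space of the right local dimension must exist, so that after the split the configuration can be moved to a position where the new diameter pairs are exact and no spurious ones arise. Here I would use rigidity theory. I would show that the diameter framework of an extremal set is infinitesimally rigid, modulo congruences, in a suitable generic sense. Equivalently, the constraint Jacobian has full rank $2n-2$ in the $3n-6$-dimensional configuration space, and it remains so after the augmentation by the new vertex. Full rank lets the implicit function theorem produce the split configuration with prescribed small edge-length changes, while the strict inequalities persist by continuity.

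To prove the full-rank claim I would use a stress argument. A nonzero equilibrium stress on the diameter edges would contradict the local convexity or polarity structure: at each vertex the diameter edges point into a pointed neighborhood cone, so the stress coefficients at that vertex would have to have mixed signs. I would then derive a contradiction via an Alexandrov/Cauchy-type sign-change count around the quadrangulation, using the fact that the faces are quadrangles. If this sign argument fails in degenerate (non-3-connected) cases, I would first handle those cases with an explicit gluing along separating 4-cycles, which by the dictionary correspond to the 2-cuts of $G$.
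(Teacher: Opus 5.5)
Your top-level reduction ($Q=\overline R(G)$, realize $Q$ as $\qt(X)$, then apply Corollary \ref{cor:skeldiagonbody}) is exactly the paper's proof. Your plan for the realization theorem is also the right one: induction over vertex splits from the regular tetrahedron, with a Cauchy-type sign count giving stress-freeness. But the splitting step, which carries all the difficulty, is not actually handled.

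First, there is a conceptual slip. For $n>4$ the diameter framework is not infinitesimally rigid, since it has only $2n-2<3n-6$ bars. What you need, and what the sign count gives, is independence: a rigidity matrix of full row rank $2n-2$, i.e. stress-freeness. The point is then to exploit the resulting flexibility. The split framework has an $(n+2)$-dimensional space of infinitesimal motions, versus $n+1$ for $(Q/a_1a_2,\vu')$, and this is what produces a motion with $\vv_{a_1}\neq\vv_{a_2}$. Moreover, the split framework with $\vu_{a_1}=\vu_{a_2}$ is not generic, and at $b_1,b_2$ two bars run along the same ray. So its stress-freeness does not follow from the sign count. The paper obtains it by pushing a stress down to $Q/a_1a_2$ and using that $\vu_{b_1}-\vu_{a_1}$ and $\vu_{b_2}-\vu_{a_1}$ are not parallel.

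The genuine gap is the direction of the split. At the start, the non-edges $a_1j$ ($j\in N_2$) and $a_2i$ ($i\in N_1$) have length exactly $1$. So ``strict inequalities persist by continuity'' says nothing about them, and a separating flex can just as well push them above $1$. The bars to $b_1,b_2$ force $\vv_{a_1}-\vv_{a_2}$ to be parallel to the normal $\vn$ of the plane through $\vu'_a,\vu_{b_1},\vu_{b_2}$. Since $-\vv$ is also a motion, you must choose the sign with $\vn^\top(\vv_{a_1}-\vv_{a_2})<0$. Then the perpendicular bisector plane of $\vx_{a_1}(\varepsilon)$ and $\vx_{a_2}(\varepsilon)$ puts $N_2$ on the side of $a_1$ and $N_1$ on the side of $a_2$, which makes those distances strictly less than $1$.

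The same choice makes the two new rays at $b_1$ appear in the order $a_2,a_1$ (and $a_1,a_2$ at $b_2$), so the cone rotation system stays equal to $\rho$. You need this for two reasons. It is required for $\qt(X)=Q$, because Lemma \ref{lem:diameterispolyhedral} only says the rotations of $\qt(X)$ are the cone rotations, not that they are the prescribed ones. It is also required to reapply stress-freeness in the next induction step.

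You cannot bypass this by prescribing lengths below $1$ on those pairs via the implicit function theorem. With those bars added, the framework at $\vu_{a_1}=\vu_{a_2}$ carries a nonzero stress as soon as $\deg a\ge 4$: take a linear dependence among the vectors $\vu_j-\vu'_a$ and put opposite coefficients on $a_1j$ and $a_2j$.

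Minor points:
\begin{itemize}
\item Deleting a degree-3 vertex does not leave a quadrangulation. Face contractions alone suffice, by Negami--Nakamoto together with the fact that contraction preserves non-bipartiteness.
\item No separate treatment of small or non-simple $G$, or gluing along separating 4-cycles, is needed. The correspondence and the quiet-node argument treat all 2-connected cases uniformly.
\end{itemize}
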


As the skeleton of a Reuleaux polyhedron is always 2-connected and strongly involutive self-dual, Theorem \ref{thm:main_skeleton} gives a full characterization. 

Our proof of Theorem \ref{thm:main_skeleton}  follows a plan suggested by Montejano, Pauli, Raggi and  Roldán-Pensado \cite{montejano2020graphs}. They noted that if $G$ is a strongly involutive self-dual map which is the skeleton of a Reuleaux polyhedron $\Bc(X)$, then we can use $G$ to calculate the diameter graph $\Gamma(X)$ without actually knowing $X$ itself. Hence, if $G$ is given and we are looking for a Reuleaux polyhedron with skeleton $G$, then we can first calculate the desired diameter graph and look for a set $X$ with that diameter graph. They formulated the idea as follows. Let $G=(V,E)$ be a strongly involutive self-dual map with self-duality $\tau_G$.  A \emph{metric mapping} of $G$ is a vector labeling $\vu : V \rightarrow \mathbb{R}^3$ such that the diameter of $\vu(V)$ is 1, and for every $v \in V$ and $u \in \tau_G(v)$ we have $|\vu(u)- \vu(v)| = 1$. If $\vu$ is injective, then it is called a \emph{metric embedding}. In other words, we want to realize the reduced radial map $\overline R(G)$ as a diameter graph in $\R^3$. Montejano, Pauli, Raggi and  Roldán-Pensado studied metric mappings of strongly involutive graphs \cite{montejano2020graphs} and showed that given a strongly involutive self-dual graph $G$, there exists a (not necessarily injective) metric mapping of $G$.  
They conjectured that if $G$ is a polyhedral graph, i.e. 3-connected and simple, then there is always a metric embedding. 

\begin{conjecture}[Montejano, Pauli, Raggi, Roldán-Pensado \cite{montejano2020graphs}]\label{conj:montereuleaux}
 For every simple planar 3-connected strongly involutive self-dual graph $G$ there is a metric embedding.
\end{conjecture}

Of course, their aim was to use this for solving Conjecture \ref{conj:kupitz} by showing that if we have a metric embedding $\vu$ of a simple 3-connected SISD map $G$, then the boundary complex of  $\Bc(\vu(V))$ is isomorphic to the face structure of $G$\footnote{They say that this follows from \cite{kupitz2010ball}. Although this is true, it is highly non-trivial and fails in the non-3-connected case. }. This implies that Conjecture \ref{conj:kupitz} and Conjecture \ref{conj:montereuleaux} are equivalent. To support their conjecture, they enumerated all simple 3-connected strongly involutive self-dual graphs up to 14 vertices and numerically computed metric embeddings for them.

For a given $G$ the metric mapping was found in the following way. First they showed that $\overline R(G)$ is 4-colorable. Then we can simply map the vertices to the corners of a regular tetrahedron using the colors. They noted that we can also view this as a rigidity theoretic framework, we have vertices in $\R^3$ and certain pairs have a fixed distance 1. They argued that the resulting framework is probably not rigid for $n>4$ as we only have $2n-2< 3n-6$ edges. Our proof of Conjecture \ref{conj:montereuleaux} was inspired by this idea. We show that all possible $\overline R(G)$ can be generated starting from a fixed quadrangulation $Q_4$ and applying a vertex splitting operation. Then we show that the same vertex splitting operation can be realized geometrically in diameter graphs using rigidity theory, see Figure \ref{fig:idea}.  

\begin{figure}[!ht]
    \centering
        \includegraphics[width=0.2\linewidth]{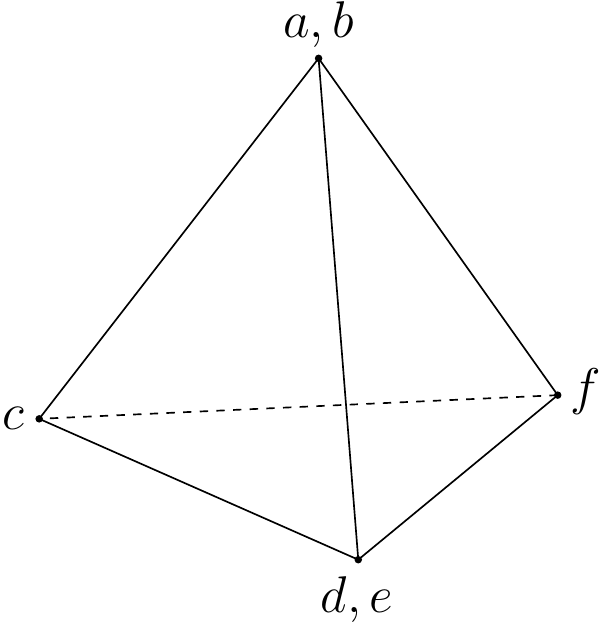}
    \includegraphics[width=0.2\linewidth]{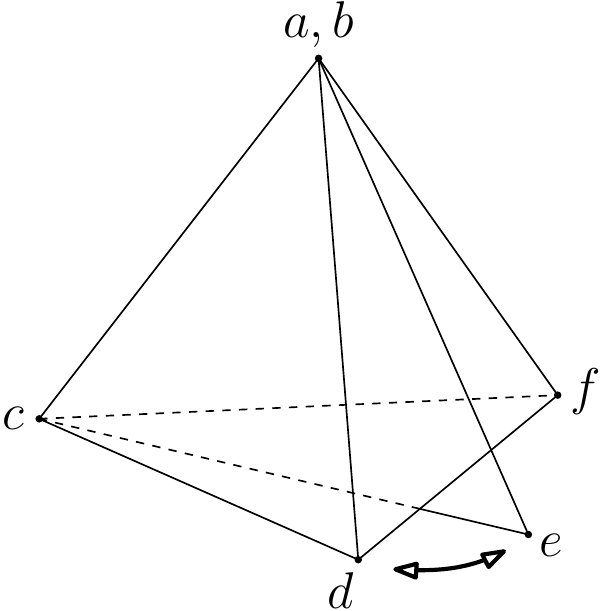}
    \includegraphics[width=0.2\linewidth]{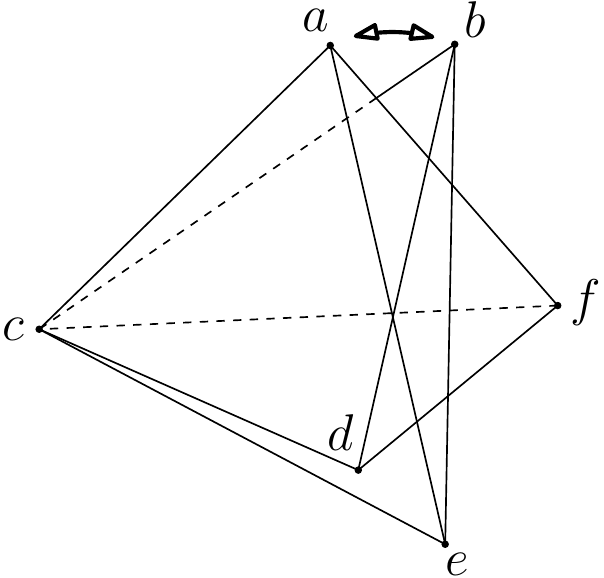}
    \caption{The geometric vertex splitting procedure using rigid frameworks.}
    \label{fig:idea}
\end{figure}

As we will work in the general 2-connected setting, we will actually require something more than a metric embedding of $\overline R(G)$; we want an extremal set $X$ such that its diameter quadrangulation $\qt(X)$ is isomorphic to $\overline R(G)$ as a map\footnote{The reason for this  comes from the following phenomenon. If $G$ is a 2-connected SISD map then $\overline R(G)$ is an NBQP map. But if $G$ is not 3-connected, then the abstract graph of $\overline R(G)$ might have other NBQP embeddings. The metric embedding condition only considers the graph of $\overline R(G)$, which is not enough to apply Theorem \ref{thm:main_skeleton}.}. We show the following strengthening of Conjecture \ref{conj:montereuleaux}.

\begin{theorem}\label{thm:main_nbqp_to_diam}
    For every simple non-bipartite quadrangulation of the projective plane $Q$ there is an extremal set $X\subset \R^3$ such that the diameter quadrangulation $\qt(X)$ is $Q$. 
\end{theorem}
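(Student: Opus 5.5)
We argue by induction along a generation sequence for non-bipartite quadrangulations of the projective plane, following the plan sketched before the statement. The combinatorial input is that every NBQP $Q$ can be obtained from the smallest one, $Q_4$ (the embedding of $K_4$ in $\Pb^2$ with three quadrangular faces), by a finite sequence of vertex splittings. We use the splitting that is inverse to contracting a face along a diagonal: a vertex $v$ whose neighbors are cyclically ordered $a_1,\dots,a_k$ is replaced by two vertices $v',v''$ and a new face $v'a_i v''a_j$. Here $v'$ inherits the neighbors from $a_i$ to $a_j$ on one side and $v''$ those on the other side. We first prove the reduction lemma: every NBQP with more than four vertices has a face that can be contracted along a diagonal while staying simple and non-bipartite. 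Non-bipartiteness is automatic, since contraction preserves the homotopy class of cycles and $Q$ has an essential odd cycle. Simplicity fails only when the two endpoints of the diagonal have a common neighbor outside the face, i.e.\ when the face lies inside a separating $4$-cycle through that diagonal. A standard minimality argument then applies: take an innermost separating $4$-cycle, or use the non-contractible structure. On the SISD side (Table \ref{tab:nbqp-sisd-dictionary}) this corresponds to deleting or contracting an edge pair $e,e^*$, which gives a second way to argue.

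The base case is the regular tetrahedron. Its four points form an extremal set with $6=2\cdot4-2$ diameter pairs, and $\qt(X)=Q_4$.

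For the inductive step, let $X$ be extremal with $\qt(X)=Q'$, where $Q'$ is obtained from $Q$ by contracting the face $v'a_iv''a_j$ into $v$. Let $p=\vu(v)$. Consider the framework formed by $p$ and its neighbors $N(p)$, with all distances $|p-q|=1$ for $q\in N(p)$. We want to replace $p$ by two points $p',p''$. We require $p'$ to be at distance $1$ from the neighbors on one side of $a_i,a_j$ together with $a_i,a_j$, and $p''$ to be at distance $1$ from the other side together with $a_i,a_j$. Both must lie strictly inside the unit balls around all other points of $X$, and we need $|p'-p''|<1$.

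The key step is to use the flexibility of the framework: $X$ has only $2n-2<3n-6$ distance constraints. By Lemma~\ref{lem:diameterispolyhedral}, the neighbors of $p$ span a pointed cone whose extremal rays appear in the rotation order of $\qt(X)$. The two arcs of neighbors $\{a_i,\dots,a_j\}$ and $\{a_j,\dots,a_i\}$ therefore each lie on a circle-like curve on the unit sphere around $p$. The set of points at distance $1$ from one arc is locally a smooth curve through $p$ when that arc has at most two neighbors. When it has more, we first perturb $X$ within its configuration space: the space of realizations with the same diameter graph has positive dimension, and we choose a generic point of it. We then move $p'$ and $p''$ slightly off $p$ in opposite directions.

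Strict inequalities to all non-neighbors survive small perturbations. The cone condition shows that moving along the right direction keeps the remaining distances to the other arc strictly less than $1$. Hence the new set has $n+1$ points and $2n-2+2$ diameter pairs, exactly the required count, and its rotation orders match $Q$.

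The main obstacle is this geometric splitting when an arc contains three or more neighbors. In that case the constraints on $p'$ become overdetermined, and the neighbors themselves must move. To handle it, we first try to realize a stronger induction hypothesis. Besides extremality, we require $X$ to be \emph{generic}: the diameter framework is infinitesimally flexible in a controlled way, so that for any vertex $p$ and any splitting of its rotation into two arcs there is an infinitesimal motion of the framework that pulls the two arcs apart. Concretely, we remove $p$, add $p'$ and $p''$ with the split edges and the two edges to $a_i,a_j$, and show with a count (or an explicit rigidity-matrix rank argument) that this framework has a realization near the degenerate one where $p'=p''=p$. We would prove this via an implicit function theorem argument on the rigidity matrix of the new framework, evaluated at the degenerate configuration.
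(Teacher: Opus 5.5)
\textbf{There is a genuine gap: the step that actually separates $p'$ from $p''$ is never established.} Your architecture matches the paper's: generate $Q$ from $Q_4$ by splittings, start from the regular tetrahedron, place both copies $p',p''$ at $p$, then flex them apart. The substitutes you offer for the flexing step do not work.

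\begin{itemize}
\item Perturbing $X$ ``generically'' within its realization space does not help. Every such configuration still has $p$ on the unit spheres about all its neighbors. So whenever one side together with $a_i,a_j$ has at least three points, $p'$ is pinned: three extreme rays of a pointed cone are linearly independent, so the three spheres meet in at most two points while the other points stay fixed.
\item The count $2n-2<3n-6$ does not show that the realization space is positive dimensional or that a separating motion exists. Over $\R$, and at a configuration as special as yours (two coincident vertices, all bars of length $1$), solution sets can be smaller than the count predicts.
\item Your fallback, an implicit function theorem at the degenerate configuration, is the right mechanism. But it requires exactly that the rigidity matrix of the split framework has full row rank, i.e.\ that this framework is stress-free. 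You never prove this. The proposed ``generic'' strengthening of the induction hypothesis is neither made precise nor shown to survive a split, and it is in fact unnecessary.
\end{itemize}

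\textbf{The missing ingredient is the Cauchy-type Lemma~\ref{lem:stressfree}.} It says that any neighborhood-coned realization of an NBQP whose cone rotations agree with a rotation system is stress-free. A nonzero stress $2$-colors its support by sign. The quiet-node lemma in $\Pb^2$ then gives a vertex at which each color is consecutive in the rotation. Since the neighborhood cone is pointed with that rotation, a plane through the vertex separates its red bars from its blue bars, so the vertex cannot be in equilibrium.

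Stress-freeness passes to the split framework. Adding the stresses on $p'a_i,p''a_i$ and on $p'a_j,p''a_j$ gives a stress on the contracted framework, hence zero. Since $\vu_{a_i}-p$ and $\vu_{a_j}-p$ are not parallel, the individual stresses vanish too. Three consequences follow:
\begin{itemize}
\item $\dim\mathbf{Inf}$ increases by exactly one under the split, giving an infinitesimal motion with $\vv_{p'}\neq\vv_{p''}$.
\item The two shared bars force $\vv_{p'}-\vv_{p''}$ to be normal to the plane through $p,\vu_{a_i},\vu_{a_j}$. So the sign can be chosen to move each copy toward the arc it is \emph{not} joined to.
\item Lemma~\ref{lem:flexing} turns this infinitesimal motion into a genuine flexing.
\end{itemize}

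\textbf{You also still need to verify two things you only assert.} First, the former unit distances from each copy to the opposite arc must become strictly less than $1$. This follows because the perpendicular bisector of $p'p''$ tends to the plane above. Second, the two new rays at $a_i$ and at $a_j$ must appear in the order the rotation system demands. This is what makes $\qt(X)=Q$ as maps rather than merely as graphs.

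\textbf{A smaller point concerns your reduction lemma.} Your sketch ignores that a face diagonal may itself be an edge; every diagonal of $Q_4=K_4$ is. The ``minimality argument'' is also not given. Cite Negami--Nakamoto instead, noting that the other exception $Q_{3,4}$ is bipartite, and that non-bipartiteness is preserved because a proper $2$-coloring of $Q/ac$ lifts to $Q$.
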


With these tools in hand the proof of Theorem \ref{thm:main_skeleton} is going to be straightforward. Given a 2-connected SISD map $G$, the idea is to take the corresponding quadrangulation $Q$ given by Theorem \ref{thm:commute}. Then we realize $Q$ as a diameter quadrangulation $\qt(X)$ (with correct rotations) using Theorem \ref{thm:main_nbqp_to_diam}. Finally, using Corollary \ref{cor:skeldiagonbody}, we argue that the extremal set $X$ gives a Reuleaux polyhedron $\Bc(X)$  with skeleton $G$. See Figure \ref{fig:mainsteps} for an illustration.

\begin{figure}[!ht]
    \centering
    \begin{subfigure}[b]{0.3\linewidth}
        \centering
        \includegraphics[width=\linewidth]{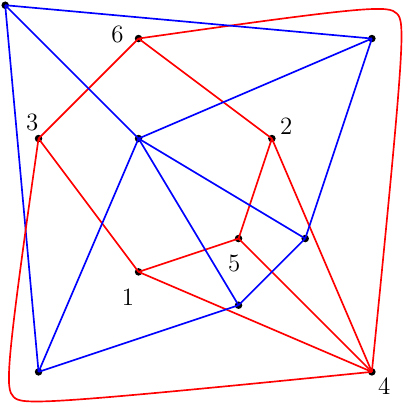}
        \caption{A strongly involutive self-dual map $G$.}
        \label{fig:mainsteps-a}
    \end{subfigure} 
    \hspace{15px}
    \begin{subfigure}[b]{0.3\linewidth}
        \centering
        \includegraphics[width=\linewidth]{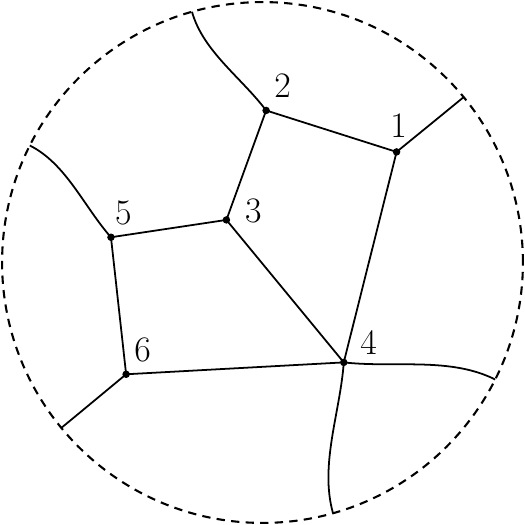}
        \caption{The corresponding quadrangulation $Q=\overline{R}(G)$.}
        \label{fig:mainsteps-b}
    \end{subfigure}
    
    \begin{subfigure}[b]{0.3\linewidth}
        \centering
        \includegraphics[width=\linewidth]{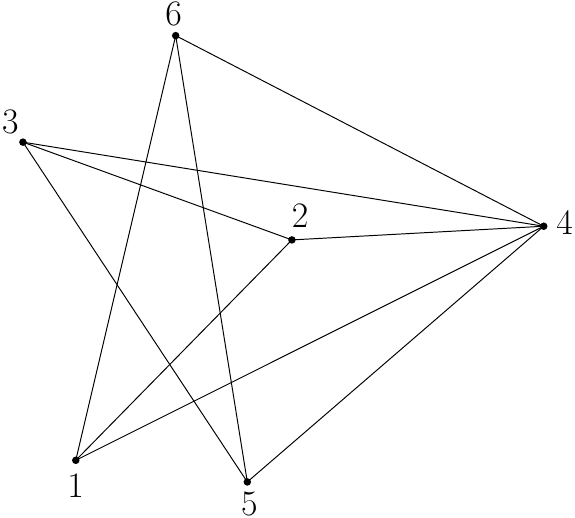}
        \caption{An extremal set $X$ so that $Q=\qt(X)$.}
        \label{fig:mainsteps-c}
    \end{subfigure}
    \hspace{15px}
    \begin{subfigure}[b]{0.3\linewidth}
        \centering
        \includegraphics[width=\linewidth]{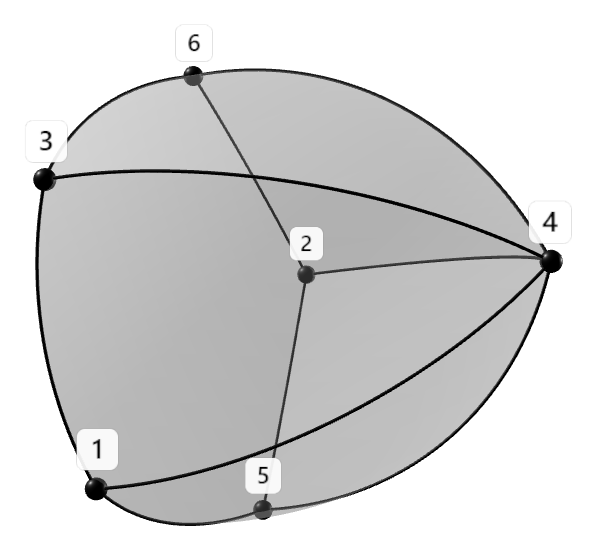}
        \caption{The Reuleaux polyhedron $\Bc(X)$ whose skeleton is $G$.}
        \label{fig:mainsteps-d}
    \end{subfigure}
    \caption{Steps of the proof of Theorem \ref{thm:main_skeleton}.}
    \label{fig:mainsteps}
\end{figure}

We also show a surprising application of our results. A \emph{topological graph} is a graph drawn in the plane so that each edge is represented by a Jordan arc, any two edges of the graph intersect finitely many times, no edge passes through a vertex different from its endpoints, and no two edges touch. A  topological graph is a \emph{thrackle} if  every pair of edges meet exactly once, where a common endpoint is also considered as a meeting point. A graph that has a thrackle drawing is called \emph{thrackleable}. A famous conjecture of Conway says that any thrackle on $n$ vertices has at most $n$ edges. After a number of subsequent improvements \cite{lovasz1997conway,CN00,fulek2011computational} the best known upper bound is $1.393n$ by Xu \cite{xu2021new}. 

Lovász et al. studied the notion of \emph{generalized thrackles}. These are topological graphs in which every pair of edges meet an odd number of times, where common endpoints are also counted \cite{lovasz1997conway}. They showed that every generalized thrackleable graph has at most $3n-4$ edges, which was improved by Cairns and Nikolayevsky to the sharp upper bound $2n-2$ \cite{CN00}. The connection to diameter graphs came from Perlstein and Pinchasi.

\begin{theorem}[Perlstein, Pinchasi \cite{ROM08}]\label{thm:diamisgent}
    If $X$ is a finite set of points in $\mathbb{R}^3$, then the diameter graph $\Gamma(X)$ is generalized thrackleable.
\end{theorem}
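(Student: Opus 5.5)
The plan is to draw the diameter graph on a sphere, where every pair of diameter edges meets an odd number of times, and then pass to the plane by a stereographic projection.

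\textbf{Normalization and the map to the sphere.} Assume $\diam(X)=1$. For a diameter pair $xy$, send it to the point
\[
\phi(xy)=\frac{y-x}{|y-x|}=y-x\in S^2 .
\]
The map $\phi$ depends on the orientation of the pair, so each edge gives two antipodal points. The drawing on the sphere will be the following.
\begin{enumerate}
\item Each vertex $x$ is represented by a small region around a point chosen from its spherical set $\{y-x : xy\in\Gamma(X)\}$. More precisely, following Perlstein--Pinchasi, vertex $x$ is drawn as a point $p_x$ on $S^2$, and the unit vectors $y-x$ lie on the boundary of the neighborhood cone of $x$.
\item The edge $xy$ is drawn as an arc from near $p_x$ to near $p_y$ that passes through $\phi(xy)$ or $-\phi(xy)$.
\end{enumerate}

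\textbf{Key geometric facts.}
\begin{enumerate}
\item If $xy$ and $uv$ are diameter pairs with no common endpoint, then the segments $[x,y]$ and $[u,v]$ cannot be disjoint and skew in a "non-linked" way. Indeed, any two diameter segments in $\R^3$ either intersect or are linked in the sense of Dol'nikov. This follows from the standard lemma that the four points $x,y,u,v$ with $|xy|=|uv|=1=\diam$ form a configuration in which the segments cross in projection along $\phi(xy)\times\phi(uv)$.
\item For a fixed $x$, the directions $y-x$ over the neighbors $y$ lie on a spherical polygon of spherical diameter less than $\pi/2$. This is because $|y-y'|\le 1$ forces the angle $y x y'$ to be at most $60^\circ$.
\end{enumerate}
Using these facts, I would draw each vertex $x$ as the point $x\in\R^3$ itself, pushed radially onto a large sphere, and draw the edge $xy$ as a path following great arcs. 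Then I would count the intersections modulo $2$ via a parity/linking-number argument. For disjoint edges, the crossing parity equals the linking parity of the two segments closed up at infinity, which is odd by fact 1. For adjacent edges $xy$ and $xz$, the arcs at $x$ meet at the common endpoint and nowhere else, since the directions differ and the angle is small.

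\textbf{Main obstacle.} The hardest step is the clean parity count: turning "pairwise linked segments" into "odd number of crossings" for a concrete planar drawing. The route I would try first is to project all segments orthogonally onto a generic plane. In this projection two segments cross at most once. The problem is that disjoint diameter segments might project to disjoint segments, so the drawing must be modified by routing edges around the antipodal sphere. This is exactly where the Perlstein--Pinchasi construction places edges along the spherical images $\pm\phi$ of the directions. I would then verify oddness by a continuity argument: the crossing parity is invariant under generic motions that preserve the diameter conditions, and it can be checked on a local model of two diameter pairs.
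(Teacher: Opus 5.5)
There is a genuine gap: the argument never produces a drawing whose crossings can actually be counted. The one drawing you pin down (vertices pushed radially onto a sphere, edges as great arcs), and the orthogonal projection you mention, already fail for the regular tetrahedron, whose diameter graph is $K_4$. A generalized thrackle drawing must have all three pairs of opposite edges crossing an odd number of times. Radial projection from an interior center gives a crossing-free spherical $K_4$. From an exterior center, or by parallel projection, you get a straight-line drawing of $K_4$, in which at most one opposite pair crosses.

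Your Fact 1 is true, but the plane it projects onto is spanned by $\phi(xy)$ and $\phi(uv)$. So it changes with the pair and says nothing about one fixed drawing. Identifying crossing parity with a ``linking parity of the segments closed up at infinity'' is not meaningful: open segments have no linking number, and for two closed curves the total number of crossings in a projection is always even. The continuity step also fails. The crossing parity of two independent edges flips whenever an endpoint of one sweeps across the other, and diameter-preserving motions do not prevent this: rotating the tetrahedron changes which opposite pair crosses in a fixed projection. Nor do you say how an arbitrary $X$ is connected to a ``local model''. Finally, the choice between $\phi(xy)$ and $-\phi(xy)$ in your first description is exactly where global consistency is needed, and you leave it open.

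What is missing is one global drawing plus the geometric lemma making it consistent. (The paper does not prove this statement; it quotes Perlstein--Pinchasi.) One way to supply both:

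Let $P_x\subset S^2$ be the spherical convex hull of the unit vectors $y-x$, $y\in N(x)$; by your Fact 2 it has spherical diameter at most $\pi/3$. Since $\diam(X)=1$, $\Bc(X)$ is a strictly convex body containing $X$, so every unit vector is an outer normal at exactly one boundary point.
\begin{itemize}
\item Each $x-y$ with $y\in N(x)$ is an outer normal at $x$ (as $x\in S(y)$ and $\Bc(X)\subseteq B(y)$). Normal cones are convex, so every point of $-P_x$ is an outer normal at $x$.
\item For $u\in P_y$, the point $y+u$ lies in $F_y$ (by spherical convexity of $F_y\subset S(y)$) and has outer normal $u$.
\end{itemize}
Hence $P_x\cap P_{x'}=(-P_x)\cap(-P_{x'})=\emptyset$ for $x\neq x'$. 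Moreover $(-P_x)\cap P_y$ equals $\{x-y\}$ if $xy$ is a diameter and is empty otherwise. So in $\Pb^2=S^2/\{\pm1\}$ the images of the $P_x$ are pairwise disjoint, except that the images of $P_x$ and $P_y$ touch in the single point $\pm(y-x)$ for each diameter $xy$. This is what fixes your sign choice.

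Now place $x$ in the image of $P_x$ and route $xy$ through that touching point. After small local adjustments when some $P_x$ is degenerate, this is an embedding of $\Gamma(X)$ in $\Pb^2$. The lift to $S^2$ of an edge leaving $P_x$ arrives in $-P_y$, so a cycle lifts to a closed curve if and only if it is even. Thus odd cycles are essential and even ones are not, i.e.\ this is a parity embedding, and Theorem~\ref{thm:thrpairtyemb} (Cairns--Nikolayevsky) turns it into a generalized thrackle.
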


It turns out that a graph is generalized thrackleable if and only if it is the subgraph of an NBQP graph. Using this idea, we will show the following equivalence. 

\begin{theorem}[Diameter graph characterization]\label{thm:main_thrackleable}
    Let $G$ be a simple graph with at least one edge. Then the following are equivalent:
    \begin{enumerate}
        \item[a)] $G$ is a diameter graph in $\R^3$.
        \item[b)] $G$ is generalized thrackleable.
        \item[c)] $G$ is a subgraph of a simple non-bipartite quadrangulation of the projective plane.
    \end{enumerate}
     
\end{theorem}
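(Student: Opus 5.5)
We prove the cycle of implications a) $\Rightarrow$ b) $\Rightarrow$ c) $\Rightarrow$ a). The first is exactly Theorem \ref{thm:diamisgent} of Perlstein and Pinchasi: if $G\cong\Gamma(X)$ for a finite $X\subset\R^3$, then $G$ is generalized thrackleable. For c) $\Rightarrow$ a) we use Theorem \ref{thm:main_nbqp_to_diam}. Suppose $G$ is a subgraph of a simple NBQP $Q$. Take an extremal set $X$ with $\qt(X)=Q$; then $\Gamma(X)$ is the graph of $Q$, and we can index its points as $x_v$ for $v\in V(Q)$.

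We now have to pass to a subgraph. Deleting vertices is immediate: for $S\subseteq V(Q)$ the diameter graph of $\{x_v:v\in S\}$ is the induced subgraph, provided it still has an edge (the diameter stays $1$). Deleting edges is the real issue. For an edge $uv$ of $Q$ not in $G$, we plan to perturb. Move $x_u$ slightly inward along a direction $d$ so that $|x_u-x_v|<1$, while every other diameter edge at $u$ that we want to keep stays at length exactly $1$.

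This is a local rigidity-style argument. The neighbours of $x_u$ lie on the unit sphere around it, and we need $d$ to satisfy $\langle d, x_w-x_u\rangle=0$ for the kept neighbours $w$ and $\langle d,x_v-x_u\rangle<0$. The last condition is what makes $|x_u-x_v|$ drop below $1$ to first order. Moving along the circle through the kept neighbours, rather than a straight line, handles the second-order terms.

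A cleaner alternative is to replace $x_u$ by a new point at the right position on the boundary of $\Bc$ of the rest. I expect this edge-deletion step to be the main obstacle, because Vázsonyi-extremal configurations can be rigid at a vertex, e.g.\ when $u$ has many kept neighbours. If it fails, a fallback is to realize the subgraph by vertex deletion alone: first modify $Q$ combinatorially, for instance by subdividing faces or adding vertices, into a larger NBQP $Q'$ in which $G$ is an induced subgraph, and then apply Theorem \ref{thm:main_nbqp_to_diam} to $Q'$.

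For b) $\Rightarrow$ c) we take a generalized thrackle drawing of $G$ and use the Cairns--Nikolayevsky framework. A generalized thrackle of $G$ yields an embedding of $G$ (or a closely related graph) in a nonorientable surface, the projective plane, in which all cycles of $G$ are one-sided exactly when they are odd. Using this, we plan to show that $G$ embeds in $\Pb^2$ with no bipartite-and-orientable obstruction. Concretely, every odd cycle is essential and every even cycle is contractible, so the embedding is non-bipartite-compatible.

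We then extend this embedding to a quadrangulation. First add edges inside faces so that every face has even length, which is possible since parity of face boundaries is controlled by the essential/contractible dichotomy. Then further subdivide faces into quadrangles by adding vertices and edges, keeping the graph simple and non-bipartite; non-bipartiteness is inherited from any odd cycle. If $G$ is bipartite, we first add a single essential odd cycle. This gives the required simple NBQP containing $G$, completing the cycle of implications.
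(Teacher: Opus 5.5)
The genuine gap is the edge-deletion step in c)$\Rightarrow$a); the other two directions are essentially the paper's.

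\textbf{The other directions.} Your a)$\Rightarrow$b) is the paper's argument. For b)$\Rightarrow$c) you are essentially reproving Lemma~\ref{lem:eqmaxcases}. Citing it together with Theorem~\ref{thm:thrpairtyemb} settles that direction. Your face-completion sketch is loose: for a 2-cell parity embedding the faces are already even, non-2-cell embeddings are not addressed, and adding edges between old vertices can destroy simplicity or the parity property.

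\textbf{Why moving one point fails.} By Lemma~\ref{lem:diameterispolyhedral} the neighbours of $x_u$ give extreme rays of a pointed cone. Any three distinct extreme rays of a pointed cone in $\R^3$ are linearly independent. So as soon as $u$ has three kept neighbours, the conditions $\langle d,x_w-x_u\rangle=0$ force $d=0$. In general, both endpoints of an edge you must delete can have three or more kept neighbours.

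\textbf{The paper's global argument.} The realization of $Q$ from Theorem~\ref{thm:main_nbqp_to_diam} is neighborhood-coned with the correct rotations, hence stress-free by Lemma~\ref{lem:stressfree}. Its rigidity matrix therefore has full row rank $2n-2$, and by the implicit function theorem every small change of the edge lengths is attained by a nearby configuration. The paper prescribes length $1$ on $E(G)$ and lengths slightly below $1$ on $E(Q)\setminus E(G)$. Non-edges stay below $1$ under a small perturbation, and deleting the points outside $V(G)$ finishes. All points move at once, which is exactly what gets around the local obstruction you anticipated.

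\textbf{Your fallback.} It is true and would even avoid the perturbation, but as stated it is an unproved assertion at the crux. Also, ``subdividing faces or adding vertices'' alone can never make two adjacent vertices of $Q$ non-adjacent; you have to delete edges of $Q$ and re-quadrangulate. One way to do it:
\begin{enumerate}
\item[1)] Take $uv\in E(Q)\setminus E(G)$ with $u,v\in V(G)$, and let $uvcd$ and $uvfe$ be the two faces at $uv$.
\item[2)] Deleting $uv$ leaves a face with boundary walk $u\,d\,c\,v\,f\,e$. Insert a new vertex $z$ joined either to $d,v,e$ (faces $zdcv$, $zvfe$, $zeud$) or to $u,c,f$ (faces $zudc$, $zcvf$, $zfeu$).
\item[3)] One of the two choices gives three distinct neighbours. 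Indeed, $d=e$ together with $c=f$ would make both faces bound the same $4$-cycle, i.e.\ form a sphere.
\item[4)] The result is a simple quadrangulation of $\Pb^2$. An odd cycle through $uv$ reroutes along $u\,d\,z\,v$ or $u\,z\,c\,v$, so the graph stays non-bipartite. Every new edge meets $z\notin V(G)$.
\end{enumerate}
Iterating gives an NBQP $Q'$ with $Q'[V(G)]=G$. Restricting a diameter realization of $Q'$ from Theorem~\ref{thm:main_nbqp_to_diam} to $V(G)$ realizes $G$, and the diameter stays $1$ because $G$ has an edge. Compared with the paper, this trades the rigidity argument for a purely combinatorial reduction.
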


The paper is structured as follows. In Section \ref{sec:rotation} we show how to define rotation systems for NBQP maps. In Section \ref{sec:cellstructure} we recall the results of \cite{kupitz2010ball} on the cell structure of Reuleaux polyhedra and their connection to diameter graphs. In Section \ref{sec:quadofext} we introduce and study $\qt(X)$, the diameter quadrangulation of an extremal set $X$.   In Section \ref{sec:generating} we recall a generating method for NBQP maps due to Negami and Nakamoto. We show how it implies a generating method for SISD maps. In Section \ref{sec:rigidity} we introduce the rigidity theoretic tools and show that frameworks given by extremal diameter graphs in $\R^3$ are always stress-free. In Section \ref{sec:qtexsists} we show our main results, Theorem \ref{thm:main_nbqp_to_diam} and Theorem \ref{thm:main_skeleton}. In Section \ref{sec:thrackles} we discuss the connection to thrackles and prove Theorem \ref{thm:main_thrackleable}. In Section \ref{sec:furtherconnections} and \ref{sec:openproblems} we discuss open problems and further applications. Appendix \ref{app:examples} shows a collection of examples.

\section{Preliminaries}

All terminology follows Part~I. In particular, isomorphisms, automorphisms, and dualities are map-theoretic unless explicitly stated otherwise. We use $M$ for general maps, which are equivalence classes of topologically equivalent embeddings.  We use $\gr(M)$ to denote the abstract graph of the map $M$. A graph (or map) is \emph{simple} if it does not contain loop edges or parallel edges. We do not assume that our graphs are simple.

We use $G$ for SISD maps. The unique strongly involutive self-duality of a SISD map $G$ is denoted by $\tau_G$.  We write $R(G)$ for the radial map and $\overline R(G):=R(G)/\overline{\tau_G}$ for the reduced radial map, where $\overline{\tau_G}$ is the map automorphism of $R(G)$ induced by $\tau_G$. 

 NBQP maps are denoted by $Q$; these are always simple graphs by definition. The canonical cover of an NBQP map $Q$ is denoted by $\widetilde Q$, which is a map of the canonical double cover $\gr(Q)\times K_2$. The diagonal map of $Q$ is denoted by $\diag(Q)$.

Finite point sets in $\R^3$ are denoted by $X$.  The diameter of a set is denoted by $\diam(X)$, while the diameter graph is denoted by $\Gamma(X)$.  The ball polytope generated by $X$ is $\Bc(X)$, and the NBQP map associated with an extremal set $X$ is denoted by $\qt(X)$.

Let $\Kc \subset \R^d$ be a closed convex cone. We say that $\Kc$ is \emph{pointed} if $\Kc\cap -\Kc=\{\mathbf{0}\}$. 
A cone $\Kc$ is \emph{polyhedral} if it can be written as a set of solutions of finitely many linear equalities and inequalities. The \emph{face} of a polyhedral cone $\Kc$ is a convex cone $F\subset \Kc$ that can be written as $F= \Kc\cap H$ for a supporting hyperplane $H$ of $\Kc$. An \emph{extreme ray} is a 1-dimensional face. A 3-dimensional pointed polyhedral cone has finitely many extreme rays. The \emph{rotation} of a pointed cone is the cyclic order of the extreme rays, that is, we list the extreme rays from the viewpoint of the apex in counterclockwise order; see Figure \ref{fig:conerotdef}.

\begin{figure}[!h]
    \centering
    \includegraphics[width=0.15\linewidth]{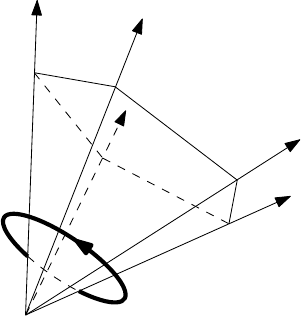}
    \caption{The rotation of a 3-dimensional pointed polyhedral cone.}
    \label{fig:conerotdef}
\end{figure}
Note that in the definition of the rotation we do not assume that the cone is full dimensional. In particular, the cone might have only 2 extreme rays $\mathbf{a}$ and $\mathbf{b}$. In this case the rotation is simply $(\mathbf{a},\mathbf{b})$.

For $\vc\in \R^d$ and $S\subset \R^d$ let $\cone_\vc(S)$ denote the translated cone that is generated by $\{\vv-\vc\mid \vv\in S\}$ and translated by $\vc$. We call $\vc$ the \emph{apex} of this translated cone, and use the terms pointed, polyhedral, face, extreme ray and rotation for the corresponding notions after translating the apex to the origin.

\section{Rotation systems for NBQP maps}\label{sec:rotation}

One way to describe the combinatorial structure of a simple connected planar map is by listing its vertices, edges and faces, and specifying the incidences between them. Another option is to use rotation systems. For a planar embedding, the \emph{rotation} of a vertex $v$ is the order of the edges around $v$ in counterclockwise order in the embedding. The \emph{rotation system} of a planar embedding $G$ is the collection of the rotations of its vertices. For isomorphic embeddings, the rotation system is either the same or each vertex rotation is reversed, depending on whether the homeomorphism is orientation-preserving or not. Hence a planar map $G$ determines the rotation system up to reversing all vertex rotations. If $G$ is connected, the converse also holds: the rotation system determines the combinatorial structure.

We will need a similar notion for graphs embedded in the projective plane. Unfortunately, the generalization is not straightforward, as the projective plane is not an orientable surface. The general idea is to fix one of the two cyclic orderings around each vertex and fix a signature for each edge, which describes whether following the curve of the edge flips the local orientation or not. See, for example, \cite{gross2001topological} for details. We will only need rotation systems for NBQPs, which simplifies the setting: we can assume that every edge has negative signature, i.e. they are all locally orientation-changing. We will only describe this simplified variant that is tailored for our needs.

Let $Q$ be an NBQP map. If we lift $Q$ to the sphere, we get a  planar map  $\widetilde Q$ of the canonical double cover $\gr(Q) \times K_2$ of $Q$ (Section 4 in Part I). Let $V_1,V_2$ be the color classes of $\gr(Q) \times K_2$ and suppose that each vertex $i$ of $Q$ is covered by $i_1\in V_1$ and by $i_2\in V_2$.  The \emph{rotation} of a vertex $i$ in $Q$ is defined based on the rotation of $i_1$. If the rotation of $i_1$ in $\widetilde Q$ is $[a_2,b_2,c_2,\dots]$, then we define the rotation of $i$ as $[a,b,c,\dots]$, see Figure \ref{fig:rotation} for an example. The collection of the vertex rotations is a \emph{rotation system of $Q$}. 
This is well-defined up to selecting the vertex classes $V_1$ and $V_2$, i.e. up to reversing all the vertex rotations. Hence, we have exactly two rotation systems of $Q$. Contrary to the planar case there is no natural way to pick one of these two rotation systems. 

\begin{figure}[!ht]
    \centering
  
    \begin{subfigure}[t]{0.25\textwidth}
    \centering
    \includegraphics[width=0.99\linewidth]{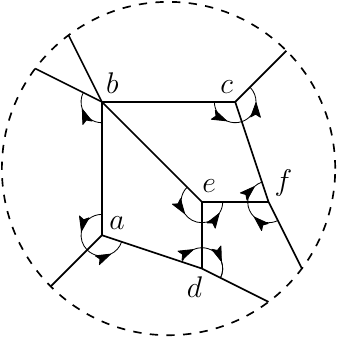}
    
    \label{fig:rotationa}
  \end{subfigure}
  \hspace{10px}
    \begin{subfigure}[b]{0.2\textwidth}
    \centering
    \includegraphics[width=0.99\linewidth]{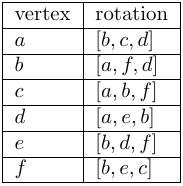}
    
    \label{fig:rotationb}
  \end{subfigure}
  \hspace{10px}
    \begin{subfigure}[t]{0.3\textwidth}
    \centering
    \includegraphics[width=0.99\linewidth]{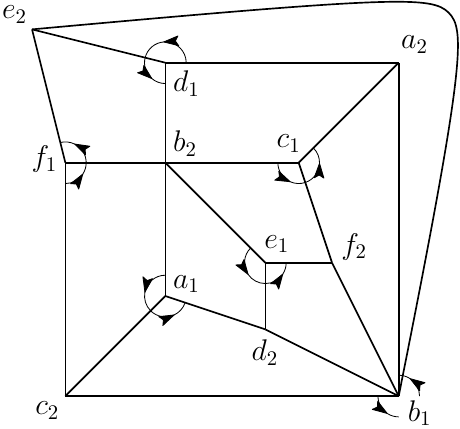}
    
    \label{fig:rotationc}
  \end{subfigure}
    \caption{Rotation in a non-bipartite quadrangulation of $\Pb^2$.}
    \label{fig:rotation}
\end{figure}

Suppose $Q_1$ and $Q_2$ are NBQP maps such that their graphs are isomorphic and the isomorphism also preserves the vertex rotations. Then $Q_1$ and $Q_2$ are the same NBQP map. Indeed, if they have matching rotation systems, then $\widetilde Q_1$ and $\widetilde Q_2$ have matching planar rotation systems. Hence $\widetilde Q_1$ and $\widetilde Q_2$ are equivalent planar maps and therefore so are $Q_1$ and $Q_2$.

Also note that the rotation system of $Q$ naturally interlaces with the rotation system of the diagonal graph $\diag(Q)$.

\section{Cell structure of Reuleaux polyhedra}\label{sec:cellstructure}
The purpose of this section is to recall some results on the structure of ball polytopes that are needed later. We follow the terminology of Kupitz--Martini--Perles~\cite{kupitz2010ball}. The structure of ball polytopes was also studied in \cite{bezdek2007ball,martini2019bodies,almohammad2022analogue,lopez2023borsuk}. 

Throughout, $X\subset \R^3$ is a finite set.  For $p\in\R^3$, let $B(p):=\{x\in \R^3:\|x-p\|\leq 1\}$ be the unit ball, and let $S(p):=\bd B(p)=\{x\in \R^3:\|x-p\|=1\}$ be the unit sphere centered at $p$. The circumradius of a bounded set \(W\) is denoted by \(cr(W)\). For a finite set \(X\subset \R^3\) with $cr(X)<1$, its associated ball polytope is
$$
        \Bc(X):=\bigcap_{x\in X} B(x).
$$

As $cr(X)<1$, $\Bc(X)$ is full-dimensional. The ball polytope $\Bc(X)$ is not a polytope in the ordinary sense: its two-dimensional faces lie on unit spheres, its edges are circular arcs, and its face structure may not be isomorphic to the face structure of a convex 3-polytope.

A point $p\in X$ is \emph{essential} if removing the unit ball centered at $p$ strictly changes the ball polytope: $        \Bc(X)\subsetneq \Bc(X\setminus\{p\})$. Equivalently, $p$ is \emph{inessential} if $\Bc(X)=\Bc(X\setminus\{p\})$. Let $\ess(X):=\{p\in X:p\text{ is essential}\}$. Then  $\Bc(\ess(X))=\Bc(X)$. A finite set $X\subset\R^3$ is called \emph{tight} if $cr(X)<1$ and $X=\ess(X)$. That is, a tight set has no redundant ball centers.

\begin{theorem}[Theorem 5.2 from \cite{kupitz2010ball}]
Let \(X\subset\R^3\) be finite and assume \(\diam(X)=1\).  Then:
\begin{enumerate}[label=\textup{(\roman*)}]
    \item \(cr(X)<1\).
    \item If \(p\in X\) is incident with at least two diameters of \(X\),
    then \(p\) is essential.
    \item If \(X\) is extremal for the Vázsonyi problem, then \(X\) is tight.
\end{enumerate}
\end{theorem}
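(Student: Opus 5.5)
(i) is Jung's theorem in $\R^3$: a set of diameter $1$ has $cr(X)\le\sqrt{3/8}<1$. I would either quote Jung or give the short argument. Let $c$ be the circumcenter. Then $c$ lies in the convex hull of the points of $X$ at distance $cr(X)$ from $c$; otherwise $c$ could be moved to shrink the enclosing ball. By Carathéodory, take at most four such points $x_i$ with $c=\sum\lambda_i x_i$. Averaging $\|x_i-x_j\|^2\le 1$ against $\lambda_i\lambda_j$ gives $2r^2(1-\sum\lambda_i^2)\le 1-\sum\lambda_i^2$. Since $\sum \lambda_i^2\le 1$, with equality only when $X$ is a single point, the bound $r^2\le 3/8$ follows from $\sum\lambda_i^2\ge 1/4$ after the standard refinement. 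The details are routine.

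(ii) Suppose $p$ has two diameter partners $q_1\neq q_2$. Then $q_1,q_2\in S(p)$ and $q_1,q_2\in\Bc(X)$. I must show that removing $B(p)$ enlarges $\Bc(X)$. Let $u=(q_1+q_2)/2-p$. Then $\|u\|<1$, since $q_1,q_2$ are distinct points on the unit sphere around $p$. Consider points of the form $y=p+t\,w$ with $\|w\|=1$ slightly larger than $1$, pushed outward from $p$ through the lens between $q_1$ and $q_2$. The cleanest route is a point near $q_1$: show that $y=q_1+\varepsilon v$ lies in $B(x)$ for all $x\neq p$ but not in $B(p)$, for a suitable direction $v$. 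For $x$ with $\|x-q_1\|<1$ this holds for small $\varepsilon$. The remaining $x$ are diameter partners of $q_1$, including $p$. So I need a direction $v$ with $\langle v,q_1-p\rangle>0$ and $\langle v,q_1-x\rangle<0$ for every other diameter partner $x$ of $q_1$.

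Such a $v$ exists iff $q_1-p$ is not in the cone spanned by the vectors $q_1-x$. These vectors are unit vectors, so $p-q_1$ could only lie in the cone if $p$ were a positive combination of other partners with norm constraint. That forces the partners to be collinear with equal unit vectors, which is impossible for distinct points. This is where the single partner $q_1$ may fail, so the second partner $q_2$ must be used.

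I expect the main obstacle to be exactly here. A unit vector can lie in the interior of the cone spanned by other unit vectors, so the claim that $p$ is extreme among the partners of $q_1$ is false in general. My fix would be to consider both $q_1$ and $q_2$ together with the circle $S(p)\cap S(q_1)\cap\dots$, and to use the known fact that the diameter partners of a point lie on a spherical cap of radius $<\pi/2$ forming a "spherical odd-cycle" structure. Alternatively, I would fall back on the citation to \cite{kupitz2010ball}.

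(iii) Let $X$ be extremal, so $|\Gamma(X)|=2n-2$. If some $p$ had degree at most $1$, deleting it would leave $n-1$ points with at least $2n-3>2(n-1)-2$ diameter pairs, since the diameter is preserved. This contradicts the Grünbaum--Heppes--Straszewicz bound. The case $n\le 2$ is trivial. Hence every point has degree at least $2$, so every point is essential by (ii). Combined with (i), this shows $X$ is tight.
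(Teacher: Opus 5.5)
The paper gives no proof of this statement; it is quoted from Kupitz--Martini--Perles, so your proposal has to stand on its own. Parts (i) and (iii) are fine. Part (i) is Jung's theorem. Your averaging step should read $2r^2=\sum_{i\neq j}\lambda_i\lambda_j\|x_i-x_j\|^2\le 1-\sum_i\lambda_i^2\le\tfrac34$, but even the weaker inequality you wrote gives $r\le 1/\sqrt2<1$, which is all that is needed. Your reduction of (iii) to (ii) through the Gr\"unbaum--Heppes--Straszewicz bound is correct.

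\textbf{The gap is in (ii), and (iii) depends on it.} You never exhibit a point of $\Bc(X\setminus\{p\})\setminus B(p)$. Your local reduction at $q_1$ is correct: a witness of the form $q_1+\varepsilon v$ exists iff $p-q_1\notin\mathrm{cone}\{x-q_1:\ \|x-q_1\|=1,\ x\neq p\}$. But your first justification of this is false, as you notice yourself. The suggested repairs (a ``spherical odd-cycle structure'', or falling back on the citation) are not arguments. Information about $q_1$ alone cannot suffice. Take the vertices $t_1,\dots,t_4$ of a regular tetrahedron of edge $1$, and adjoin the point of $S(t_1)$ on the ray from $t_1$ through the centroid of the opposite face. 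This point has $t_1$ as its only diameter partner, yet it is inessential.

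\textbf{The missing idea is to use the second diameter $pq_2$ in the computation.} With it, the idea you abandoned at the start of (ii) works directly.
Put $a=q_1-p$, $b=q_2-p$ and $u=(a+b)/2$, so $0<\|u\|<1$ because $a\neq\pm b$. Let $z=p+u/\|u\|\in S(p)$.
For $x\in X\setminus\{p\}$ write $w=x-p\neq\mathbf 0$. The condition $\|x-q_1\|\le 1$ is equivalent to $\langle w,a\rangle\ge\|w\|^2/2$, and likewise $\langle w,b\rangle\ge\|w\|^2/2$. Hence $\|x-z\|^2=\|w\|^2-2\langle w,u\rangle/\|u\|+1\le 1-\|w\|^2\bigl(1/\|u\|-1\bigr)<1$.
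Since $X$ is finite, $y=p+(1+\varepsilon)u/\|u\|$ lies in every $B(x)$ with $x\neq p$ for small $\varepsilon>0$, while $\|y-p\|=1+\varepsilon$. So $p$ is essential.

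Your local route can be completed the same way. Take a representation $p-q_1=\sum\lambda_x(x-q_1)$, pair it with $q_1-q_2$, and use $\|x-q_2\|\le 1=\|p-q_2\|$; this gives $\sum\lambda_x\le 1$. The strict triangle inequality for distinct unit vectors gives $\sum\lambda_x>1$, a contradiction.
Explicitly, let $v=(q_2-q_1)-\tfrac12\|q_1-q_2\|^2(p-q_1)$, the tangent at $q_1$ of the great-circle arc of $S(p)$ towards $q_2$. It satisfies $\langle v,p-q_1\rangle=0$ and $\langle v,x-q_1\rangle>0$ for every other partner $x$ of $q_1$. Then $q_1+\varepsilon v$ is a witness, since $\|q_1+\varepsilon v-p\|^2=1+\varepsilon^2\|v\|^2>1$.
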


Assume now that \(X\) is tight and \(cr(X)<1\).  For a point $p \in X$ let $F_p = \{x \in
\Bc(X) : |x - p| = 1\}$ denote the \emph{facet} of $\Bc(X)$ corresponding to $p$. For tight $X$, the facets partition the boundary as one would expect: their union is the whole boundary, but their relative interiors are pairwise disjoint. On the other hand, $F_p\cap F_q$ might be two vertices of $X$, which cannot happen for convex 3-polytopes. See the examples \ref{fig:example-B}, \ref{fig:example-E} and \ref{fig:example-G}.

 A point $x\in \bd(\Bc(X))$ is a \emph{vertex} of $\Bc(X)$ if it belongs to at least three facets of $\Bc(X)$ or if it is in $X\cap \Bc(X)$ and belongs to exactly two facets. In the first case the vertex is called a \emph{principal vertex}, in the second case it is called a \emph{dangling} vertex. The set of vertices of $\Bc(X)$ is denoted by $\vertices(\Bc(X))$. Examples \ref{fig:example-B} and \ref{fig:example-E} have dangling vertices. 

  An edge of $\Bc(X)$ is the closure of a connected component of $(F_p\cap F_q) \setminus (\vertices(\Bc(X)))$, where $\{p, q\}$ is a pair of distinct points of $X$. The need for such a definition comes from the fact that $F_p\cap F_q$ might not be connected and might contain dangling vertices. We can have parallel edges but no loop edges.

 The graph formed by the vertices and edges of $\Bc(X)$ is called the skeleton of $\Bc(X)$ and denoted by $\skel \Bc(X)$. We understand $\skel \Bc(X)$ as a planar map using its natural embedding on $\bd (\Bc(X))$. The skeleton is not necessarily simple, see Examples \ref{fig:example-B} and \ref{fig:example-E} that have parallel edges. Even if the minimum degree is at least 3, the skeleton is not necessarily 3-connected, see Example \ref{fig:example-G}.

\subsection{Structure of the boundary}
The skeleton of a ball polytope does not need to be 3-connected, but it is 2-connected.

\begin{theorem}[Theorem 6.1 in \cite{kupitz2010ball}]\label{thm:2connect}
For a finite $X\subset \R^3$, which is tight and $|X|\ge3$, the map \(\skel(\Bc(X))\) is 2-connected.
\end{theorem}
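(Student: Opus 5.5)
We want to show that for tight finite $X$ with $|X|\ge 3$, the map $\skel(\Bc(X))$ is connected and remains connected after deleting any single vertex. The natural tool is the geometry of the boundary of $\Bc(X)$, which is homeomorphic to a sphere because $\Bc(X)$ is a full-dimensional convex body. We argue through the facet structure: the boundary is tiled by the facets $F_p$, $p\in X$.

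\textbf{Step 1: the facets are discs bounded by cycles.} For tight $X$ and each $p\in X$, the facet $F_p=S(p)\cap\Bc(X)$ is the intersection of the sphere $S(p)$ with the closed balls $B(q)$, $q\ne p$. Each $B(q)\cap S(p)$ is a spherical cap of the unit sphere with angular radius less than $\pi/2$, since $|p-q|>0$. An intersection of such caps that has nonempty interior in $S(p)$ is spherically convex. Tightness gives nonempty relative interior, so $F_p$ is a closed topological disc. Its boundary is a closed curve composed of circular arcs lying on the circles $S(p)\cap S(q)$. With $|X|\ge3$, this boundary is a genuine Jordan curve rather than empty: if $F_p$ were the whole sphere, then $\Bc(X)=B(p)$ would contradict essentiality of the other points. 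The boundary curve passes through the principal vertices, where at least three facets meet, and possibly through dangling vertices. We conclude that the boundary of each facet is a cycle of $\skel$ of length at least $2$ (parallel edges are allowed). The boundary of $\Bc(X)$ is therefore a sphere decomposed into closed discs whose boundaries are cycles of the skeleton, with pairwise disjoint interiors. This is a 2-cell embedding in which every face boundary is a cycle.

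\textbf{Step 2: from facial cycles to 2-connectivity.} We use the standard fact that a connected plane map in which every face is bounded by a cycle (no repeated vertex along a face walk) is 2-connected. The skeleton is connected because each face is a 2-cell and the sphere is connected, so the union of the face boundaries is connected. Suppose $v$ were a cut vertex. Then some face would contain $v$ at least twice on its boundary walk: around $v$, the faces are cyclically arranged, and two consecutive edges at $v$ leading into different blocks lie on a common face whose boundary walk revisits $v$. This contradicts Step 1, where every face boundary is a simple closed curve. Facets also meet each other without creating such repeats, because each facet boundary is an embedded circle.

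\textbf{Main obstacle.} The delicate point is Step 1 at dangling vertices and in non-generic situations. We must ensure that $F_p$ is a disc whose boundary is an embedded simple curve, not pinched, and that the vertex and edge definitions make this curve a cycle of the skeleton. Spherical convexity of intersections of caps with radius less than a hemisphere handles pinching. We must also check that the boundary arcs indeed become edges between vertices, which requires every boundary curve to contain at least one vertex. If $F_p\cap F_q$ were a full circle with no vertices, then $X$ would consist of just $p$ and $q$ in the relevant part, and $|X|\ge3$ together with tightness rules this out, since a third essential facet must touch that circle. Dangling vertices only subdivide arcs, so they preserve the cycle property.
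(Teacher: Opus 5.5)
The paper does not prove this statement. It quotes it from Kupitz--Martini--Perles and uses it as a black box, so there is no internal argument to compare yours with. Your route is correct and self-contained:
\begin{itemize}
\item spherical convexity makes every facet a closed disc bounded by a cycle of $\skel\Bc(X)$, so $\bd\Bc(X)$ is a regular cell decomposition of the sphere;
\item a cut vertex $v$ would force the face lying between two rotation-consecutive edges at $v$ that go to different components of $\skel\Bc(X)-v$ to revisit $v$.
\end{itemize}
Compared with citing, it costs a page. In return it shows exactly where tightness and $|X|\ge 3$ enter, namely in the degenerate-facet case below.

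Three points should be written out rather than asserted.

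\emph{Nonempty relative interior of $F_p$.} By essentiality, pick $y\in\Bc(X\setminus\{p\})$ with $|y-p|>1$, and let $z$ be an interior point of $\Bc(X)$. The segment from $z$ to $y$, with $y$ removed, lies in the interior of $\Bc(X\setminus\{p\})$ and crosses $S(p)$. A small ball around the crossing point then meets $S(p)$ in an open subset of $F_p$.

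\emph{Identification of faces and edges.} A relative interior point of $F_p$ lies on no other $S(q)$, since $S(p)$ and $S(q)$ cross transversally along a circle. Near a point lying on exactly two facets $F_p,F_q$, one has $\Bc(X)=B(p)\cap B(q)$ locally. Hence along $\partial F_p$ the second facet can only change at a vertex. This is what makes the arcs between consecutive vertices edges in the paper's sense.

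\emph{The degenerate case.} Your treatment is too vague (``$X$ would consist of just $p$ and $q$ in the relevant part''). You must also exclude a facet boundary carrying exactly one vertex, since that would give a loop and break your ``length at least $2$'' claim.
\begin{itemize}
\item In both cases the second facet is constant, so $\partial F_p=C:=S(p)\cap S(q)$.
\item Then $C\subset\partial F_q$, and since both are Jordan curves on $S(q)$, this forces $\partial F_q=C$.
\item So $F_p$ and $F_q$ are the two caps bounding the lens $L=B(p)\cap B(q)$.
\item Since $\bd L\subset\Bc(X)\subset L$ and $L$ is convex, $\Bc(X)=L=\Bc(\{p,q\})$.
\item Every third point is then inessential, contradicting tightness with $|X|\ge 3$.
\end{itemize}
Separately, your worry that $F_p$ might be the whole sphere never arises, since each cap $S(p)\cap B(q)$ has angular radius less than $\pi/2$.

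With these filled in, the argument is complete.
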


The following result captures the deep connection between the Vázsonyi problem and the corresponding ball polytope. The set $X$ is extremal if and only if the set of vertices of $\Bc(X)$ coincides with the set $X$ itself. 

\begin{theorem}[Extended Grünbaum--Heppes--Straszewicz theorem, Theorem 7.1 in \cite{kupitz2010ball}]\label{thm:extendedghs}
Let \(X\subset\R^3\) be finite with \(|X|=n\geq 4\) and \(\diam(X)=1\).  The
following are equivalent:
\begin{enumerate}[label=\textup{(\roman*)}]
    \item \(X\) is extremal for the Vázsonyi problem.
    \item $X$ has $2n-2$ diameters.
    \item \(X\) is tight and $X=\vertices \Bc(X)$.
    
\end{enumerate}
\end{theorem}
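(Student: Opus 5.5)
Since $\diam(X)=1$ is part of the standing hypothesis, the equivalence (i)$\Leftrightarrow$(ii) is just the definition of an extremal set, so the real content is (ii)$\Leftrightarrow$(iii). The plan is to encode diameters as point--facet incidences and then compare two counts via Euler's formula on the skeleton. The basic observation is that $X\subset\Bc(X)$ because $\diam(X)=1$, and that for $p,q\in X$ we have $|p-q|=1$ if and only if $q\in F_p$, which holds if and only if $p\in F_q$. Hence the number $D$ of diameters equals half the number $I$ of incidences $(q,F_p)$ with $q\in X\cap F_p$. For each $q\in X$, let $\deg_\Gamma(q)$ be the number of facets containing $q$, which is its degree in $\Gamma(X)$.

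\textbf{Step 1: (iii)$\Rightarrow$(ii).} Assume $X$ is tight and $X=\vertices\Bc(X)$. Then the facets are indexed by $X$, so $F=n$, and $V=n$.
\begin{itemize}
\item By Theorem~\ref{thm:2connect}, the skeleton is a 2-connected plane map.
\item Its faces are the connected components of relative interiors of facets. I would check, as in \cite{kupitz2010ball}, that in this situation each facet is a single face. If this fails, the count below has to be done on faces and corrected for the splitting.
\item Euler's formula then gives $E=V+F-2=2n-2$.
\item Every vertex $x$ is a point of $X$, and the facets through $x$ are exactly the faces around $x$. For a principal vertex this number equals its skeleton degree. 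For a dangling vertex it is $2$, which also equals its skeleton degree.
\end{itemize}
Therefore $I=\sum_x \deg_{\skel}(x)=2E$, and so $D=E=2n-2$.

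\textbf{Step 2: (ii)$\Rightarrow$(iii).} Assume $D=2n-2$. First, every point is incident to at least two diameters. Otherwise deleting a point $q$ with $\deg_\Gamma(q)\le1$ leaves $n-1$ points with at least $2n-3>2(n-1)-2$ diameters, contradicting the Grünbaum--Heppes--Straszewicz bound. Hence Theorem~5.2(ii) makes every point essential, so $X$ is tight, $F=n$, and every $q\in X$ lies in at least two facets. A point in at least three facets is a principal vertex, and a point of $X$ in exactly two facets is a dangling vertex. So $X\subseteq\vertices\Bc(X)$. It remains to exclude principal vertices not belonging to $X$. Let $V=n+k$, where $k$ counts vertices outside $X$. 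Each such vertex $v$ has skeleton degree at least $3$ and contributes at least $3$ to $2E$ but nothing to $I$. Each $x\in X$ contributes $\deg_\Gamma(x)\le\deg_{\skel}(x)$ to $I$, since every facet through $x$ contributes at least one face at $x$. Thus $4n-4=2D=I\le 2E-3k$. Euler's formula with at least $F=n$ faces gives $E\le V+(\#\text{faces})-2$. I still need an upper bound on $E$ that does not grow faster than $3k/2$. For this I would use the dual count: every face has at least $2$ sides, and faces are in bijection with facets when facets are connected. Combining these should force $k=0$ and $\deg_\Gamma=\deg_{\skel}$ on $X$.

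\textbf{Expected obstacle.} The delicate part is the bookkeeping in Step 2 when the map is non-generic. A facet $F_p$ may split into several faces, $F_p\cap F_q$ may be disconnected, and dangling vertices, which lie in exactly two facets, must be counted correctly. I would first prove the inequality $D\le E-\tfrac12\sum_{v\notin X}\deg_{\skel}(v)$ carefully, face by face. Then I would show separately that a disconnected facet creates an extra face without creating a new diameter, so it strictly loses in the count. Equality $D=2n-2$ would then force connected facets, no extra vertices, and $X=\vertices\Bc(X)$.
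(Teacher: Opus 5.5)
The paper does not prove this statement; it quotes it from Kupitz--Martini--Perles, so your argument has to stand on its own. Much of it is sound. (i)$\Leftrightarrow$(ii) is definitional. In Step 2, the reduction to $\deg_\Gamma\ge 2$ via the Grünbaum--Heppes--Straszewicz bound is correct, as are tightness via Theorem 5.2 and the inclusion $X\subseteq\vertices\Bc(X)$.

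The gap is that Step 2 is never closed. You end with ``I still need an upper bound on $E$'' and ``should force $k=0$''. Both steps also rest on a fact you leave open and even suspect may fail: that each facet is exactly one face of the skeleton and meets each of its vertices in exactly one corner. The missing idea is that facets are spherically convex.
\begin{itemize}
\item Since all distances in $X$ are at most $1$, for $x\neq p$ the set $S(p)\cap B(x)$ is a cap of angular radius $\arccos(|x-p|/2)<\pi/2$.
\item So $F_p$ is an intersection of caps lying in open hemispheres. Hence it is a spherically convex disc bounded by a simple closed curve, with nonempty relative interior when $p$ is essential.
\item Two distinct unit spheres whose centers are at distance at most $1$ cross transversally, so no point of $F_p\cap F_q$ is relatively interior to $F_p$.
\end{itemize}
Consequently the faces of the map are exactly the relative interiors of the $n$ facets, and each facet through a vertex contributes exactly one corner there.

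With this in hand nothing else is needed. Euler's formula gives exactly $E=(n+k)+n-2=2n-2+k$. Your own inequality then becomes $4n-4=2D\le 2E-3k=4n-4-k$, so $k=0$, and equality forces $\deg_\Gamma=\deg_{\skel}$ on $X$. No dual count is required.

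Your proposed fallback for disconnected facets, on the other hand, would fail. If facets produced $s$ extra faces, Euler's formula would raise $E$ by $s$, and the same computation would give only $k\le 2s$. An extra face loosens the count rather than ``losing'' in it. Split facets therefore have to be ruled out by convexity, not absorbed into the bookkeeping. The same convexity fact is also what justifies, in Step 1, that the number of faces is $n$ and that the number of facets through $x$ equals $\deg_{\skel}(x)$.
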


The connection between the diameter graph $\Gamma(X)$ and $\skel(\Bc(X))$ is given by the following lemma. 

\begin{lemma}[Combination of Lemma 6.1 and 6.4 of \cite{kupitz2010ball} for the extremal case]\label{lem:precise}
    Let $X$ be an extremal set of points and for a vertex $c$ let $\{v \in X : |v - c| = 1\} = \{v_1, v_2,\dots , v_m\}$. Then we have the following. 
    \begin{enumerate}
        \item[a)]  The polyhedral cone $\Kc_c=\cone_c(v_1, \dots , v_m)$ is pointed with apex $c$, and the rays $\cone_c(v_i), i = 1, 2, \dots , m$, are the extreme rays of it.
        \item[b)] If they appear in this order, then the facets of $\Bc(X)$ that meet at $c$ are precisely     \(F_{v_1},\ldots,F_{v_m}\).
        \item[c)] These facets are separated by edges $e_i$ with $e_i \subset F_{v_i}\cap F_{v_{i+1}} , i = 1, \dots , m$, hence their order matches the order of $v_1,\dots, v_m$.
    \end{enumerate}

\end{lemma}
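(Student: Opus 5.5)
We derive the lemma from Lemmas 6.1 and 6.4 of \cite{kupitz2010ball}, using extremality only through Theorem~\ref{thm:extendedghs}, which says that $X$ is tight and $X=\vertices\Bc(X)$. Fix $c\in X$. Because $c$ is a vertex of $\Bc(X)$, it lies on $\bd\Bc(X)$. A point $v\in X$ satisfies $c\in F_v$ exactly when $|v-c|=1$, so the facets containing $c$ are exactly $F_{v_1},\dots,F_{v_m}$. Since $c$ is a vertex, $m\ge 2$, and $m\ge 3$ unless $c$ is dangling.

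\textbf{Part a).} Near $c$, the body $\Bc(X)$ coincides with $\bigcap_i B(v_i)$. Hence its tangent cone $T_c$ at $c$ is $\{x: \langle x-c, v_i-c\rangle\ge 0 \ \forall i\}$, the polar of $\Kc_c$ with respect to the apex $c$. Since $\Bc(X)$ is full-dimensional, $T_c$ has nonempty interior, so $\Kc_c$ is pointed.

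For extremality of each ray $\cone_c(v_i)$, tightness is the key input. Suppose $v_i-c$ were a nonnegative combination of the other generators $v_j-c$. Then the constraint $\langle x-c,v_i-c\rangle\ge0$ would be implied by the others, so $F_{v_i}$ would meet a neighbourhood of $c$ only in lower-dimensional pieces. I would like to derive a contradiction with $F_{v_i}$ being a genuine 2-dimensional facet passing through $c$, but this local argument only controls first-order behaviour.

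The local picture alone is therefore not enough. I expect this to be the main obstacle, and here I rely on the cited Lemma~6.1, which uses $\diam X=1$ globally. The geometric fact behind it is that all $v_j$ lie in $B(c)\cap\bigcap_j B(v_j)$. This makes the sphere $S(c)$ and the points $v_j$ mutually "supporting", and a degenerate $v_i$ would then violate the diameter bound.

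\textbf{Parts b) and c).} Order the extreme rays cyclically around $c$ as seen from the apex. Polarity of 3-dimensional pointed cones turns extreme rays of $\Kc_c$ into facets of $T_c$, and adjacent extreme rays into adjacent facets. So the 2-dimensional facets of $T_c$ are the tangent planes of $F_{v_1},\dots,F_{v_m}$, in the same cyclic order. Moreover, consecutive facets $F_{v_i},F_{v_{i+1}}$ share an edge of $T_c$, which is the tangent direction of a circular arc in $S(v_i)\cap S(v_{i+1})$ emanating from $c$. That arc is the edge $e_i\subset F_{v_i}\cap F_{v_{i+1}}$, and the edges $e_i$ separate the facets around $c$ in the stated order.

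The degenerate case $m=2$ (a dangling vertex, where the cone is 2-dimensional) is checked separately. There $T_c$ is a wedge, two facets meet along two edges, and the rotation $(v_1,v_2)$ is consistent with the statement.
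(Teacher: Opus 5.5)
Your proposal has the same level of justification as the paper, which gives no proof of this lemma beyond citing Lemmas~6.1 and~6.4 of \cite{kupitz2010ball}. Your pointedness argument ($T_c$ is the dual cone of $\Kc_c$ at $c$, and it has interior because $\Bc(X)$ is full-dimensional) is correct, and so is your identification of the facets through $c$. However, your diagnosis of the step you defer is wrong, and that step is in fact short.

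A point $v_i$ whose ray is not extreme need not violate the diameter bound: put $v_3$ on the short great-circle arc of $S(c)$ between two points $v_1,v_2$ at distance at most $1$. What such a point contradicts is tightness. For $|v-c|=1$ the identity
\[
|y-v|^2=|y-c|^2-2\langle y-c,\,v-c\rangle+1
\]
is exact. So $y\in B(v_j)$ if and only if $\langle y-c,v_j-c\rangle\ge\tfrac12|y-c|^2$, with the \emph{same} quadratic term for every $j$. Suppose $v_i-c=\sum_{j\ne i}\lambda_j(v_j-c)$ with $\lambda_j\ge0$. The $v_j-c$ are distinct unit vectors, so the strict triangle inequality gives $\sum_{j\ne i}\lambda_j>1$. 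Hence every $y\ne c$ in $\bigcap_{j\ne i}B(v_j)$ satisfies $\langle y-c,v_i-c\rangle>\tfrac12|y-c|^2$, i.e.\ $|y-v_i|<1$. Therefore $F_{v_i}\subseteq\{c\}$. This contradicts the essentiality of $v_i$: $X$ is tight by Theorem~\ref{thm:extendedghs}, and for an essential $v_i$ the set $F_{v_i}$ has nonempty relative interior in $S(v_i)$. Your worry about first-order information disappears because all the spheres have radius $1$ and pass through $c$.

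Your treatment of b) and c) silently needs the same identity. The tangent cone $T_c$ is only first-order data, exactly the kind you distrusted in a). Let $d$ be the direction of the edge of $T_c$ between the facets normal to $v_i-c$ and $v_{i+1}-c$. To conclude that the circle $S(v_i)\cap S(v_{i+1})$ leaves $c$ inside $\Bc(X)$ along $d$, and so gives an edge $e_i$, you need $\langle d,v_j-c\rangle>0$ for every $j\ne i,i+1$. This follows from a): otherwise $v_j$ would lie in the two-dimensional face of $\Kc_c$ spanned by the rays towards $v_i$ and $v_{i+1}$, and its ray would not be extreme. The positive linear term then dominates $\tfrac12|y-c|^2$ for $y$ on the arc near $c$.

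The same sign argument shows that, for non-adjacent $v_i,v_k$, no arc of $S(v_i)\cap S(v_k)$ enters $\Bc(X)$ at $c$, because its tangent line meets $T_c$ only at $0$. This is what makes the $e_i$ the only edges at $c$. With these additions your outline becomes a self-contained proof rather than a citation.
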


For a planar map $M$ let $B(M)$ denote its barycentric subdivision. The skeleton $\skel(\Bc(X))$ has an involutive self-duality that is fixed point free when acting on the barycentric subdivision.

\begin{theorem}[\cite{kupitz2010ball}]\label{thm:canonicalselfduality}
    Suppose $G=(V,E,F)$ is the skeleton of $\Bc(X)$, where $|X|\ge 4$ and $X$ is extremal. Then $G$ has a canonical self-duality $\tau_X:V\cup E\cup F\rightarrow V\cup E\cup F$ with the following properties:
    \begin{enumerate}
        \item $\tau_X$ is involutive, that is, $\tau_X^2=\id$.
        \item For every $p\in X$ we have $\tau_X(p)=F_p$.
        \item For every edge $uv\in E$ we have that the endpoints of $\tau_X(uv)$ are included in both facets $F_u$ and $F_v$.
        \item The map automorphism of $B(G)$ that is induced by $\tau_X$ is fixed point free. 
    \end{enumerate}
\end{theorem}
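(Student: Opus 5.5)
The plan is to build $\tau_X$ explicitly from the diameter structure and then check the four properties. On vertices and facets the definition is forced: put $\tau_X(p)=F_p$ and $\tau_X(F_p)=p$ for $p\in X$. This is a bijection $V\to F$ because $X=\vertices\Bc(X)$ by Theorem \ref{thm:extendedghs}, and $X$ is tight, so every $p$ indexes a genuine facet. Incidence is reversed and symmetric: $c\in F_p$ iff $|c-p|=1$ iff $p\in F_c$. So the vertex–facet incidences, i.e. the radial map $R(G)$ as an abstract graph, are preserved by $\tau_X$.

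The substantive step is describing the boundary of each facet $F_c$.
\begin{itemize}
\item Its vertices are exactly the diameter neighbours $v_1,\dots,v_m$ of $c$, since these lie in $X=\vertices\Bc(X)$ at distance $1$ from $c$.
\item I would show that the cyclic order of these vertices along $\bd F_c$ equals the rotation of the cone $\Kc_c$ from Lemma \ref{lem:precise} a). The argument: $F_c$ is a spherically convex region on $S(c)$, so the angular order of its boundary points seen from the centre $c$ is their order along the boundary. The direction $v_i-c$ generates an extreme ray of $\Kc_c$, and the extreme rays of a pointed cone in angular order give its rotation.
\item Consequently the edges of $F_c$ are arcs joining consecutive $v_i,v_{i+1}$. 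For $m=2$ these are two parallel edges, which is where dangling vertices occur.
\end{itemize}

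Next comes the edge map. At a vertex $c$, Lemma \ref{lem:precise} c) gives edges $e_i\subset F_{v_i}\cap F_{v_{i+1}}$, and I set $\tau_X(e_i)$ to be the edge of $F_c$ joining $v_i$ and $v_{i+1}$. The difficulty, which I expect to be the main obstacle, is well-definedness: $e_i$ has a second endpoint $d$, and the construction at $d$ must produce the same edge. Both $c$ and $d$ lie in $F_{v_i}\cap F_{v_{i+1}}$, so $v_i,v_{i+1}\in F_c\cap F_d$. I would then apply Lemma \ref{lem:precise} at $v_i$. There $c$ and $d$ appear as consecutive neighbours, because $e_i$ is a boundary edge of $F_{v_i}$ joining them, by the previous paragraph applied to $F_{v_i}$. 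Hence the edge of $\skel\Bc(X)$ at $v_i$ lying between $F_c$ and $F_d$ is the edge $v_iv_{i+1}$, which is common to both facets. Parallel edges must be handled carefully here: when $F_c\cap F_d$ is disconnected, I would distinguish the edges by the rotation positions rather than by their endpoints. This same argument gives property 3 directly, and applying the construction twice returns $e_i$, so $\tau_X^2=\id$ (property 1). Property 2 holds by definition. Incidences are preserved by construction, so $\tau_X$ is a self-duality of the map; by the rotation correspondence it respects the cyclic structure, not only incidences.

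Finally, property 4 concerns fixed points of the induced map on the barycentric subdivision $B(G)$. A fixed flag or barycentre would require a vertex $p$ with $p\in\tau_X(p)=F_p$, or an edge with $\tau_X(e)=e$. The first is impossible since $|p-p|=0\neq1$. For the second, the endpoints of $e$ are $c,d$ while those of $\tau_X(e)$ are $v_i,v_{i+1}$, which are at distance $1$ from $c$, so they cannot coincide. A vertex or face of $B(G)$ corresponding to a vertex–edge or edge–facet incidence would likewise force some $p\in F_p$. Hence the induced automorphism of $B(G)$ is fixed-point free.
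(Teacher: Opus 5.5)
Since the paper imports this theorem from Kupitz--Martini--Perles without proof, your argument has to stand on its own. It breaks at exactly the step you flag as the main obstacle.

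Lemma~\ref{lem:precise} applied at $v_i$ does give an edge $f$ at $v_i$ with $f\subset F_c\cap F_d$. But $f$ is one of the two boundary edges of $F_c$ at $v_i$, so its other endpoint is $v_{i+1}$ \emph{or} $v_{i-1}$. Your ``Hence \dots\ is the edge $v_iv_{i+1}$'' gives no reason to exclude $v_{i-1}$, and nothing in the incidences you use excludes it. The vertex $v_{i-1}$ can be a further common diameter neighbour of $c$ and $d$; this is possible when $\qt(X)$ has separating $4$-cycles, which the 2-connected setting allows. Also, $F_c\cap F_d$ need not be connected. If $f$ ran to $v_{i-1}$, the edge you assign to $e_i$ from the endpoint $c$ would differ from the one assigned from $d$, and $\tau_X$ would not be defined. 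The same unresolved choice is hidden in ``distinguish the edges by rotation positions'' when $m=2$.

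What settles the choice is orientation, which your Step 2 discards. You compare the boundary order of $F_c$ with the rotation of $\Kc_c$ only up to reversal (``seen from the centre $c$''). You never relate it to the orientation in which Lemma~\ref{lem:precise} orders the facets around $c$. You need the following uniform statement, with $\bd\Bc(X)$ viewed from outside:

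\begin{enumerate}
\item At every vertex $c$, the counterclockwise order of the facets is the rotation of $\Kc_c$. The reason is that the tangent cone of $\Bc(X)$ at $c$ is the dual cone of $\Kc_c-c$ and is seen from the same side.
\item For every facet $F_c$, the counterclockwise boundary walk is the \emph{reversed} rotation of $\Kc_c$. The reason is that $F_c$ is seen from outside $B(c)$, i.e.\ from the side opposite the apex $c$.
\end{enumerate}

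With this in hand, a left/right check at $v_i$ shows that $f$ is the edge towards $v_{i+1}$. More cleanly, $p\leftrightarrow F_p$ is then a graph automorphism of the radial map $R(G)$ that reverses every rotation, hence a map automorphism. Here $R(G)$ is simple because $G$ is 2-connected by Theorem~\ref{thm:2connect}. This automorphism carries the face $(c,F_{v_i},d,F_{v_{i+1}})$ of $R(G)$, i.e.\ the edge $e_i$, to a face $(F_c,v_i,F_d,v_{i+1})$, i.e.\ an edge joining $v_i$ and $v_{i+1}$ inside $F_c\cap F_d$. This gives well-definedness, $\tau_X^2=\id$, property 3 and the digon case at once. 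Your argument for property 4 then goes through essentially as written.
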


This immediately implies that the skeleton is a strongly involutive self-dual map with the canonical self-duality.

\begin{corollary}\label{cor:canonicalissisd}
    Suppose $X$ is extremal and $|X|\ge 4$. Then the canonical self-duality of $\skel\Bc(X)$ is a strongly involutive self-duality.
\end{corollary}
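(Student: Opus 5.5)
Corollary~\ref{cor:canonicalissisd} follows by checking the two defining conditions of a strongly involutive self-duality against the properties listed in Theorem~\ref{thm:canonicalselfduality}. By that theorem, $\tau_X$ is a self-duality of $G=\skel\Bc(X)$ and satisfies $\tau_X^2=\id$. This is exactly condition 1).

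The remaining task is condition 2): no vertex lies on the boundary of its own dual face. By Theorem~\ref{thm:extendedghs}, extremality gives $X=\vertices\Bc(X)$, so every vertex of $G$ is a point $p\in X$. Property 2 of Theorem~\ref{thm:canonicalselfduality} then gives $\tau_X(p)=F_p$. So we must show $p\notin F_p$. This is a metric fact: $F_p$ consists of points at distance exactly $1$ from $p$, whereas $|p-p|=0$. Hence $p\notin F_p$, which is condition 2).

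The only point needing care is that $\tau_X$ is a map-theoretic self-duality, so the face $\tau_X(p)$ is the face of $G$ whose boundary is the cycle of edges of $G$ around the facet $F_p$. Incidence of $p$ with that face in $G$ means $p$ lies on the boundary of $F_p$, and in particular $p\in F_p$. This is ruled out by the distance argument, so no genuine obstacle remains.

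One could also derive condition 2) from property 4 of Theorem~\ref{thm:canonicalselfduality}. If $p$ were incident with $\tau_X(p)$, the flag consisting of $p$ and the face $F_p$ would give a vertex of the barycentric subdivision $B(G)$ whose image relates it to itself. With the edge data this produces a fixed point. The direct distance argument is simpler, so I would use it.

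Finally, the uniqueness of strongly involutive self-dualities quoted in Part~I identifies $\tau_X$ with $\tau_G$.
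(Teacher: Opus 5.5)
Your proof is correct and is the paper's argument: involutivity comes directly from Theorem~\ref{thm:canonicalselfduality}, and condition 2) follows from $\tau_X(p)=F_p$ together with the fact that every point of $F_p$ is at distance exactly $1$ from $p$. Invoking Theorem~\ref{thm:extendedghs} to make explicit that every vertex of the skeleton is a point of $X$ is a useful detail the paper leaves implicit; the barycentric alternative and the uniqueness remark are not needed.
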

\begin{proof}
    The self-duality is involutive by Theorem \ref{thm:canonicalselfduality}. As we have $\tau_X(p)=F_p$, it is not possible that $p\in\tau_X(p)$. 
\end{proof}

\section{Quadrangulation corresponding to an extremal set}\label{sec:quadofext}
A \emph{vector labeling} of a map  $Q=(V,E,F)$  is a function $\vu:V\rightarrow \R^d$. That is, we specify the position of the vertices of $Q$ in $\R^d$. A pair $(Q,\vu)$ is called a \emph{framework} if $\vu(p)\neq \vu(q)$ for any edge $(p,q)\in E$.  We say that a framework $(Q,\vu)$ is \emph{neighborhood-coned} if $\Kc_i=\cone_{\vu_i}(\{\vu_j:j\in N(i)\}) $ is a pointed cone for each $i\in V$ and for each $ij\in E$ the cone $\cone_{\vu_i}(\vu_j)$ is an extreme ray of $\Kc_i$. The $\Kc_i$ cones will be called the \emph{neighborhood-cones} of $(Q,\vu)$. For example, the framework given by the skeleton of a convex 3-polytope is neighborhood-coned.

Recall that the \emph{rotation} of a pointed cone is the cyclic order of the extreme rays, that is, we list the extreme rays from the viewpoint of the apex in counterclockwise order. The \emph{cone rotation system} of a neighborhood-coned framework is the collection of the cone rotations of the $\Kc_i$ cones. For example, the cone rotation system of the framework given by the skeleton of a polytope is nothing else but the rotation system of the skeleton viewed as a planar graph embedded on the surface of the polytope.

Now we show that extremal diameter graphs are also neighborhood-coned. At the same time we show that for each extremal $X$ the diameter graph $\Gamma(X)$ has a natural embedding in  $\Pb^2$ as a simple non-bipartite quadrangulation $\qt(X)$. Recall that we define $\qt(X)$ as $\overline R(\skel\Bc(X))$. For an extremal set $X$ we will say that $\qt(X)$ is the \emph{diameter quadrangulation of $X$}.  In the other direction, if $Q$ is a given NBQP, then an $X$ for which $Q=\qt(X)$ is called a \emph{diameter realization of $Q$}. By Corollary \ref{cor:canonicalissisd} and Theorem \ref{thm:2connect} the map $\skel\Bc(X)$ is 2-connected and strongly involutive self-dual. Hence by Theorem \ref{thm:commute} $\qt(X)$ is indeed a well defined simple non-bipartite quadrangulation of the projective plane. We show that the graph of $\qt(X)$ is the diameter graph $\Gamma(X)$ and that the vertex rotations of $\qt(X)$ correspond to the orders of the extreme rays of the neighborhood cones. 

\begin{lemma}\label{lem:diameterispolyhedral}
    If $X$ is an extremal point set consisting of $n\ge 4$ distinct points, then there is a unique simple non-bipartite quadrangulation $\qt(X)=(V,E)$ of the projective plane such that there is a bijective vector labeling $\vu:V\rightarrow X$ with the following properties. 
    \begin{enumerate}[label=\alph*)]
    \item $ij\in E$ if and only if $\vu_i\vu_j$ is a diameter of $X$.
    \item $(\qt(X),\vu)$ is neighborhood-coned.
   \item The cone rotation system of $(\qt(X),\vu)$ matches one of the rotation systems of $\qt(X)$.
    \end{enumerate}
Furthermore, this unique quadrangulation is $\overline{R}(\skel\Bc(X))$.
    
\end{lemma}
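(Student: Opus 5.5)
We will prove existence and uniqueness separately, taking as candidate $Q:=\overline R(\skel\Bc(X))$.

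\textbf{Existence.} By Corollary \ref{cor:canonicalissisd} and Theorem \ref{thm:2connect}, $G:=\skel\Bc(X)$ is a 2-connected SISD map with self-duality $\tau_X$. Hence $Q$ is an NBQP map by Theorem \ref{thm:commute}. By Theorem \ref{thm:extendedghs} we have $X=\vertices\Bc(X)=V(G)$. The radial map $R(G)$ has vertex set $V\cup F$, and $\overline{\tau_G}$ identifies each vertex $p$ with the face $F_p=\tau_X(p)$. So the vertices of $Q$ correspond bijectively to $X$, and we take $\vu$ to be this correspondence.

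For a), an edge of $Q$ is the image of a radial edge between a vertex $q$ and a face $F_p$ incident to it. Such an edge exists exactly when $q\in F_p$, which means $|p-q|=1$ because $q\in\Bc(X)$. As $\diam X=1$, this says that $pq$ is a diameter. The symmetry $q\in F_p\Leftrightarrow p\in F_q$ is exactly the symmetry under $\overline{\tau_G}$, so both orientations give the same edge. Simplicity of $Q$ (Theorem \ref{thm:commute}) ensures each diameter gives exactly one edge.

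For b), Lemma \ref{lem:precise}a states that $\Kc_c$ is pointed and that every ray toward a diameter partner is extreme.

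For c), fix $c\in X$ and its lift $c_1$ in $\widetilde Q=R(G)$, taking the color class $V_1$ to be the vertex class $V$ of $G$. The rotation of $c$ in $R(G)$ lists the faces of $G$ around $c$ in counterclockwise order. By Lemma \ref{lem:precise}b,c, these faces are $F_{v_1},\dots,F_{v_m}$, in the same cyclic order as the extreme rays of $\Kc_c$. The remaining point is to check that the orientation is consistent across all vertices, so that we get a single global rotation system rather than a pointwise reversal. We handle this by viewing $\Kc_c$ from the apex $c$. The outward normal of $\Bc(X)$ at $c$ points away from the $v_i$, so looking from $c$ into the cone is the same as looking into the body from outside. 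The counterclockwise order on $\bd\Bc(X)$ around $c$ therefore corresponds to a fixed (say clockwise) cone order at every vertex. Up to the global reversal allowed in the definition of a rotation system of $Q$, the two systems then agree.

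\textbf{Uniqueness.} Suppose $Q'$ with labeling $\vu'$ also satisfies a)--c). Then $\vu^{-1}\circ\vu'$ is a graph isomorphism $\gr(Q')\to\gr(Q)$, because both graphs are $\Gamma(X)$ by a). By c), the rotation system of $Q'$ equals the cone rotation system, which equals one of the rotation systems of $Q$. By the observation in Section \ref{sec:rotation}, a rotation-preserving graph isomorphism between NBQP maps gives the same map, so $Q'=Q$.

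The \textbf{main obstacle} is the global orientation consistency in c). We need a careful local argument that the cone order seen from $c$ is the mirror image of the boundary order seen from outside. The justification is that $\Kc_c$ lies on the inner side of the supporting plane at $c$, which is the inward normal cone direction.
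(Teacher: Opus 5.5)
Your proposal is correct and follows essentially the paper's proof. It takes $\overline R(\skel\Bc(X))$ as the candidate and identifies the radial edges $(q,F_p)$ with diameters. It uses Lemma~\ref{lem:precise} for pointedness, extremality and the cyclic order of the faces, and it gets uniqueness from the fact that the graph together with a rotation system determines the NBQP map.

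Your explicit discussion of global orientation consistency makes precise something the paper leaves implicit. One correction: with the outside-view convention, the face order around $c$ and the ray order of $\Kc_c$ actually coincide rather than being mirror images. Both are seen along an inward line of sight, and passing from the rays of $\Kc_c$ to the facets of its dual cone (the tangent cone of $\Bc(X)$ at $c$) preserves cyclic orientation. This does not affect your argument, since only uniformity over all vertices is used.
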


\begin{proof}
    First we argue for uniqueness. If there is such a quadrangulation $\qt(X)$, then its graph is fixed by $a)$. Its two rotation systems are fixed by $b)$ and $c)$. As the graph and the rotation system determine the NBQP map $\qt(X)$ is determined by the properties $a),b),c)$.
    
    Next, we show that $\overline{R}(\skel\Bc(X))$ works. By Corollary  \ref{cor:canonicalissisd} we know that $\skel\Bc(X)$ is strongly involutive self-dual and by Theorem \ref{thm:2connect} it is 2-connected. Hence, by Theorem \ref{thm:commute} $\qt(X):=\overline R(\skel\Bc(X))$ is an NBQP. Let $\vu:V(\qt(X))\rightarrow X$ be the natural vector labeling. Consider now how the radial graph construction appears geometrically. We know that a given vertex $v$ is incident to a face $F_w$ if and only if $vw$ is a diameter pair. Hence $R(\skel\Bc(X))$, the radial graph of the skeleton, is the bipartite graph on vertex classes $X$ and $\{F_x|x\in X\}$ with edge set $\{(v,F_w) | v,w\in X \text{ and }|v-w|=1 \}$. By Theorem \ref{thm:canonicalselfduality} we know that in the canonical self-duality $v$ is sent to $F_v$. Therefore, the reduced radial graph $\qt(X)$ is indeed the diameter graph of $X$. By Lemma \ref{lem:precise} $(a)$, it is also neighborhood-coned.

    Finally, recall the rotation definition for an NBQP (Section \ref{sec:rotation}). We lift the quadrangulation to the sphere and consider the rotation in one of the color classes. We have also seen that the lifted quadrangulation map $\widetilde Q$ is the same as the radial map of the diagonal graph of the NBQP; this is exactly Theorem \ref{thm:commute} $(c)$.  Hence, in this case, a possible rotation system for $\qt(X)$ is given by the rotation of the vertices of $R(\skel\Bc(X))$ that correspond to the vertex class $X$. That is, for part $c)$ we have to show that faces of $\Bc(X)$ incident to a given vertex $v$ correspond to the vertices of $X$ that are diameter far from $v$ and the faces appear in the right order. This follows immediately from Lemma \ref{lem:precise} $(b)-(c)$.
    
\end{proof}

Therefore, we have an equivalent definition for $\qt(X)$. It is the unique NBQP map such that there is a vector labeling $\vu$ that satisfies the conditions of Lemma \ref{lem:diameterispolyhedral}.

\section{Generating NBQP maps and SISD maps}\label{sec:generating}
Here we discuss a generating method for NBQP maps and 2-connected SISD maps. Let $Q$ be a quadrangulation of a surface and let $f=abcd$ be a face of it. We say that the face $f$ is \emph{contractible} at the diagonal $ac$ if we can shrink it as shown in Figure \ref{fig:facecontr} until $a$ and $c$ coincide, and we get a simple quadrangulation. That is, $ac$ is contractible if we do not get loops or parallel edges, i.e. $ac$ is not an edge and there are exactly two common neighbors of $a$ and $c$. We will denote the resulting new vertex by $\overline{ac}$. If a face is contractible at a diagonal $ac$, we denote by $Q/ac$ the contracted quadrangulation. Although a pair of vertices might form a diagonal in more than one face, in those cases it is not contractible, hence $Q/ac$ is well-defined.  

\begin{figure}[!h]
    \centering
     \includegraphics[width=0.6\linewidth]{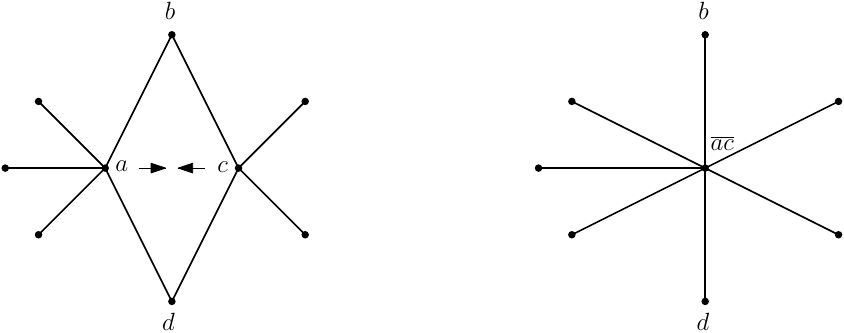}
    \caption{Face contraction operation.}
    \label{fig:facecontr}
\end{figure}

The inverse of a face contraction is called a \emph{vertex splitting}. Here, we can pick any two edges incident to a common vertex and split them into a new face.  Negami and Nakamoto showed the following theorem.

\begin{lemma}[Negami and Nakamoto \cite{NEG93}] \label{lem:contractible}
    Each simple quadrangulation of the projective plane has a contractible diagonal except $Q_4$ and $Q_{3,4}$ embedded as in Figure \ref{fig:q4q34}.
\end{lemma}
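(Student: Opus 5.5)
This statement is Negami and Nakamoto's theorem, which the paper cites rather than proves. If I were to prove it myself, I would argue by contradiction from a minimal configuration. Let $Q$ be a simple quadrangulation of $\Pb^2$ with no contractible diagonal. Recall that a diagonal $ac$ of a face $abcd$ is contractible if and only if $ac\notin E$ and $a,c$ have exactly the two common neighbours $b,d$. A face $abcd$ fails to be contractible at $ac$ in one of two ways:
\begin{itemize}
\item[(i)] $ac$ is an edge; or
\item[(ii)] there is a third common neighbour $e$, so $a b c e$ (or $adce$) is a $4$-cycle through $a,c$ other than a face boundary.
\end{itemize}
The plan is to show that blocking both diagonals of every face forces a very rigid local structure, and that this structure closes up only as $Q_4=K_4$ or $Q_{3,4}=K_{3,4}$.

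The first step handles case (i). Here $a,b,c$ form a triangle, which is an odd cycle. In a quadrangulation of $\Pb^2$ every contractible closed curve is even, since it bounds a disc tiled by quadrangles. So this triangle is essential (non-contractible).

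In case (ii), the two $2$-paths $abc$ and $aec$ form a $4$-cycle $C$. If $C$ is contractible, it bounds a disc $D$ containing vertices other than face corners. I would then pick a face inside $D$, or a chain of such $4$-cycles, that is innermost or of minimal disc area. Then I would argue that some face in the innermost region has a contractible diagonal. This is the standard planar-disc lemma: a quadrangulated disc with a $4$-cycle boundary and interior vertices contains a contractible face. It is proved by induction on the number of interior vertices, using that a degree-$2$ interior vertex or a minimal separating $4$-cycle gives a contractible diagonal.

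So in a counterexample every "blocking" $4$-cycle and every blocking triangle is essential in $\Pb^2$.

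The second step exploits the fact that any two essential closed curves in $\Pb^2$ intersect. For each face, both diagonals are blocked by essential short cycles: triangles or $4$-cycles through the diagonal's endpoints. From this I would derive a degree bound. First, a degree-$3$ vertex with three pairwise non-adjacent neighbours gives a contractible diagonal unless extra essential triangles exist. More generally, I would show that every vertex has small degree and every pair of vertices at distance $2$ has at least three common neighbours or is adjacent. Combining this with the Euler formula for quadrangulations of $\Pb^2$ ($|E|=2|V|-2$, $|F|=|V|-1$) should force $|V|\le 7$.

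The final step is a finite case check of simple quadrangulations of $\Pb^2$ with at most $7$ vertices. The embedded $K_4$ has $4$ vertices and $3$ faces, with every diagonal an edge. The embedded $K_{3,4}$ has $7$ vertices, and each non-adjacent pair in the same class has $3$ common neighbours. All others have a contractible face.

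The main obstacle I expect is the second step: turning "every diagonal is blocked by an essential short cycle" into the global bound on $|V|$. I would first try a discharging or counting argument on pairs (face, diagonal), using that two disjoint essential cycles cannot coexist in $\Pb^2$. If that proves too weak, I would fall back on Negami–Nakamoto's original approach: choose an essential cycle, cut $\Pb^2$ along it to get a disc, and induct.
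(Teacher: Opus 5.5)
Your second step is where the entire content of the Negami--Nakamoto theorem lies, and it is a list of intentions rather than an argument; the paper only cites the result, so your proposal has to stand on its own. The degree bound is asserted with no mechanism. The claim about pairs at distance $2$ is just the hypothesis restated for pairs opposite on a face, and you give no reason for other pairs. Most importantly, Euler's formula cannot turn an \emph{upper} bound on degrees into a bound on $|V|$, since it only says the average degree is $4-4/|V|<4$. So nothing in your plan forces $|V|\le 7$, and the final enumeration is both unjustified and not carried out. Your first step is sound after two small repairs. First, if $abce$ is itself a face, use $adce$ instead, which bounds $abcd\cup abce$ with interior vertex $b$. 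Second, in the disc lemma, contract a diagonal with an endpoint in the interior of a \emph{minimal} such disc, so that contractibility within $D$ is contractibility in $Q$.

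The missing idea is to use Euler in the other direction. It gives a vertex $v$ of degree at most $3$, and degree $2$ is excluded: if $x$ has neighbours $p,q$ and faces $pxqr$, $pxqs$, then $xr$ is contractible. Let $u_1,u_2,u_3$ be the neighbours of $v$ in rotation order and $vu_iw_iu_{i+1}$ the faces at $v$ (indices mod $3$). The diagonal $vw_i$ is blocked only if $w_i=u_{i+2}$ or $w_i\sim u_{i+2}$.
\begin{itemize}
\item Suppose $Q$ is non-bipartite. Then every $4$-cycle is contractible: length parity is a $\mathbb{Z}_2$-homology invariant vanishing on faces, and non-bipartiteness makes it nonzero on the essential class. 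So your first step shows no diagonal is blocked by a third common neighbour, hence $w_i=u_{i+2}$. The three faces at $v$ then use each edge of the $K_4$ on $\{v,u_1,u_2,u_3\}$ exactly twice. They therefore already form a closed surface, namely all of $\Pb^2$, and $Q=Q_4$.
\item Suppose $Q$ is bipartite. Then $w_i\sim u_{i+2}$, and the $w_i$ are distinct: if $w_i=w_j$, the vertex $u_k$ with $vu_k$ on both of those faces would have degree $2$. So $Q$ contains $H\cong K_{3,4}$ with $12=2\cdot 7-2$ edges. A bipartite graph in $\Pb^2$ has at most $2|V|-2$ edges, with equality only for quadrangulations. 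Hence the induced embedding of $H$ is a quadrangulation whose faces are $4$-cycles bounding discs. If $Q\neq H$, one of these discs contains a vertex of $Q$ in its interior; a chord is impossible, since it would join two vertices of the same colour class. Your disc lemma then gives a contractible diagonal.
\end{itemize}
This replaces both the degree bound and the enumeration.
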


\begin{figure}[!h]
    \centering
    \includegraphics[width=0.2\linewidth]{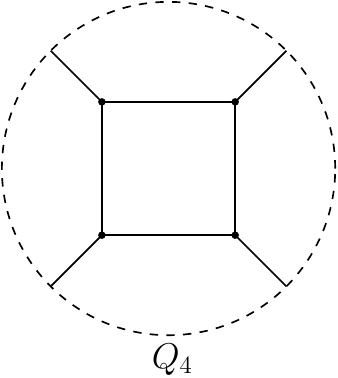}\hspace{20px}
    \includegraphics[width=0.2\linewidth]{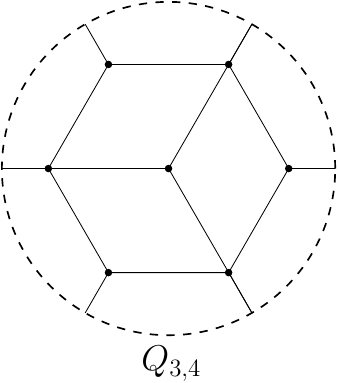}
\caption{The only quadrangulations of $\Pb^2$ that have no contractible faces.}
    \label{fig:q4q34}
\end{figure}

This gives us a generating method for quadrangulations. We can reach any desired quadrangulation with the right sequence of vertex splittings either from $Q_4$ or from $Q_{3,4}$. 
As a face contraction can only increase the chromatic number of a quadrangulation, we obtain the following. 

\begin{corollary} \label{cor:contractible2}
    If $Q$ is an NBQP map that is not $Q_4$, then $Q$ has a contractible diagonal $ac$ and $Q/ac$ is also an NBQP. In other words, any NBQP can be generated from $Q_4$ using vertex splitting.
\end{corollary}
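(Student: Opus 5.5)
The claim to prove is Corollary~\ref{cor:contractible2}: every NBQP $Q\neq Q_4$ has a contractible diagonal $ac$ with $Q/ac$ again an NBQP. I would argue by combining Lemma~\ref{lem:contractible} with two facts: contraction preserves non-bipartiteness, and $Q_{3,4}$ is bipartite. The generation statement then follows by induction on the number of vertices.

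\textbf{Step 1: the exceptional case.} Let $Q$ be an NBQP with $Q\neq Q_4$. By Lemma~\ref{lem:contractible}, $Q$ has a contractible diagonal unless $Q$ is $Q_4$ or $Q_{3,4}$. The graph of $Q_{3,4}$ is the complete bipartite graph $K_{3,4}$, so $Q_{3,4}$ is bipartite and cannot be an NBQP. Hence $Q$ has a contractible diagonal $ac$ in some face $abcd$.

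\textbf{Step 2: $Q/ac$ is an NBQP.}
\begin{itemize}
\item By the definition of contractibility, $Q/ac$ is a simple quadrangulation of $\Pb^2$. Shrinking the face $abcd$ is a local operation on the surface, so it gives again a 2-cell embedding in $\Pb^2$ with all faces of size four.
\item For non-bipartiteness, suppose $Q/ac$ had a proper $2$-coloring. Give both $a$ and $c$ the color of $\overline{ac}$ and keep all other colors.
\item Every edge of $Q$ either avoids $\{a,c\}$ or corresponds to an edge of $Q/ac$ at $\overline{ac}$. The edge $ac$ itself is absent, since contractibility requires that $ac$ is not an edge.
\item So this is a proper $2$-coloring of $Q$, a contradiction. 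This is exactly the remark that contraction cannot decrease the chromatic number.
\end{itemize}

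\textbf{Step 3: generation.} Each contraction reduces the vertex count by one, and NBQPs have at least $4$ vertices. Iterating Steps 1–2 therefore terminates, and it can only stop at $Q_4$. Reversing the sequence of contractions expresses $Q$ as obtained from $Q_4$ by vertex splittings.

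\textbf{Main obstacle.} The only subtle point is confirming that $Q_{3,4}$ is bipartite, i.e.\ that its graph really is $K_{3,4}$ as the name and Figure~\ref{fig:q4q34} indicate. I would check this directly from the figure. I would also verify that the contracted map stays in $\Pb^2$ rather than changing the surface, which is clear because the operation is local inside a disk.
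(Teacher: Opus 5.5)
Your proposal is correct and follows essentially the same route as the paper. Both apply Negami--Nakamoto, exclude $Q_{3,4}$ because its graph $K_{3,4}$ is bipartite, and use the fact that face contraction cannot decrease the chromatic number (a $2$-coloring of $Q/ac$ lifts to $Q$), so non-bipartiteness is preserved and the reduction must terminate at $Q_4$.
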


 The colorings of projective quadrangulations were first studied by Youngs, who showed the following surprising result. 

\begin{theorem}[Youngs \cite{YOU96}]\label{thm:youngs}
    If $Q$ is a quadrangulation of the projective plane, then the chromatic number of $Q$ is either 2 or 4.
\end{theorem}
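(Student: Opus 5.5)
Youngs' theorem says the chromatic number of a projective-plane quadrangulation $Q$ is either $2$ or $4$. Since $Q$ is a quadrangulation it has no odd faces. The chromatic number is $2$ exactly when $Q$ is bipartite, so the content is the dichotomy for non-bipartite $Q$: (i) $\chi(Q)\ge 4$, and (ii) $\chi(Q)\le 4$.

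For the lower bound I would argue by contradiction and suppose $c:V\to\{1,2,3\}$ is a proper $3$-colouring of a non-bipartite quadrangulation $Q$. Orient each edge $uv$ and assign it the value $\delta(u,v)\in\{+1,-1\}$ according to whether $c(v)-c(u)\equiv 1$ or $-1 \pmod 3$. Summing $\delta$ around any closed walk gives a multiple of $3$, which defines a winding number $w(C)\in\mathbb{Z}$ as that sum divided by $3$.

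The next step is to examine a single quadrangular face. Its boundary sum is a sum of four terms $\pm1$, hence even, and lies in $\{-4,\dots,4\}$. Since it is also divisible by $3$, it must be $0$. So $w$ vanishes on face boundaries, and because the faces generate the cycle space modulo homology, $w$ descends to a homomorphism $H_1(\Pb^2;\mathbb{Z})\to\mathbb{Z}$.

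Care is needed here because $\Pb^2$ is non-orientable and $H_1=\mathbb{Z}/2$. I would therefore argue directly on cycles. Let $C$ be a cycle not homotopic to zero; such a cycle has odd length, because $Q$ is non-bipartite, and the odd cycles are exactly the essential ones. The cycle $C$ traversed twice is null-homotopic, so it is expressible as a sum of face boundaries, which forces $2w(C)=0$ and hence $w(C)=0$. On the other hand, the sum around $C$ is a sum of an odd number of terms $\pm1$, so it is odd and cannot be $0$. This contradiction gives $\chi\ge4$.

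The main obstacle is to justify the step that the doubled essential cycle is a $\mathbb{Z}$-combination of face boundaries, with orientations handled correctly. To make this rigorous I would lift to the double cover $\widetilde Q$ on the sphere. There every cycle is a signed sum of face boundaries, so the lift of $C$ has winding number zero. That lift is a closed walk covering $C$ twice, which gives $2w(C)=0$.

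For the upper bound, I would invoke Corollary \ref{cor:contractible2} together with Lemma \ref{lem:contractible} and induct on vertex splitting from $Q_4$ (and $Q_{3,4}$ in the bipartite case). A face contraction $Q\mapsto Q/ac$ identifies two vertices $a,c$ that have no edge between them. Any proper colouring of $Q/ac$ therefore pulls back to a proper colouring of $Q$ by giving both $a$ and $c$ the colour of $\overline{ac}$, so $\chi(Q)\le\chi(Q/ac)$. The base cases are $\chi(Q_4)=4$ and $\chi(Q_{3,4})=2$, so every projective quadrangulation satisfies $\chi\le4$. Combining the two bounds yields $\chi\in\{2,4\}$.
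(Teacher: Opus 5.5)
Your proof is correct. Note that the paper gives no proof of Theorem \ref{thm:youngs}: it quotes the result from Youngs and uses it as a black box. So you are supplying a self-contained argument where the paper has none.

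\textbf{Upper bound.} This is exactly the remark the paper makes before Corollary \ref{cor:contractible2}: a face contraction identifies two non-adjacent vertices, so colourings pull back and $\chi(Q)\le\chi(Q/ac)$. You run this down the Negami--Nakamoto reduction of Lemma \ref{lem:contractible}. That lemma is stated for simple quadrangulations, which is the paper's setting. A lighter route, also available in the paper, is $3$-degeneracy. Every subgraph $(V',E')$ of an NBQP graph has at most $2|V'|-2$ edges (Lemma 4.4 of Part I, quoted after Lemma \ref{lem:stressfree}), hence a vertex of degree at most $3$. Greedy colouring then gives $\chi\le 4$ with no generation theorem at all.

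\textbf{Lower bound.} This is the standard winding-number argument and is sound, but it can be shortened in two places.
\begin{enumerate}
\item Your first, homological formulation is already complete. Since $\delta$ sums to zero around each face in either direction, it is an integral cellular $1$-cocycle. Evaluation on cycles therefore factors through $H_1(\Pb^2;\mathbb{Z})\cong\mathbb{Z}/2$, and any homomorphism from this group to $\mathbb{Z}$ is zero. Non-orientability needs no extra care.
\item You do not need the claim that the odd cycles are exactly the essential ones. It is true, but your write-up does not prove it. Take any odd cycle $C$, which exists because $Q$ is non-bipartite. Then $2[C]=0$ in $H_1$; equivalently, $C$ traversed twice always lifts to a closed walk on the sphere. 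This gives $2w(C)=0$, which contradicts the odd $\delta$-sum along $C$.
\end{enumerate}
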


From Lemma \ref{lem:contractible} and Youngs' result it follows that face contraction/vertex splitting never changes the chromatic number of a projective quadrangulation, and all NBQP maps have chromatic number 4. (See Section \ref{sec:furtherconnections} for more results on colorings.)

Bracho et al. introduced a similar generating procedure for 3-connected SISD graphs \cite{bracho2020strongly}. It is essentially the same as vertex splitting for NBQPs combined with Theorem \ref{thm:commute}. The main difference is that they restrict themselves to the 3-connected world. 

If $G$ is a planar map and $e=vw$ is an edge of it, then let $G\setminus e$ denote the map obtained by deleting $e$. Let $G/e$ be the map obtained from $G$ by first deleting $e$ and then contracting $v$ and $w$ into a single vertex. When contracting $v$ and $w$ we do not merge edges going to common neighbors of $v$ and $w$, but keep them as parallel edges.  

They showed that in every 3-connected strongly involutive self-dual map $G$ other than the odd wheel graphs there exists a dual edge pair $e,\tau_G(e)$ such that $(G/ e)\setminus \tau_G(e)$ is a 3-connected strongly involutive self-dual map. We step outside of the 3-connected world, hence we formulate the generating method in the more general case.

Suppose $Q$ is an NBQP and we contract a quadrangle $abcd$ along $ac$. Then, in $\diag(Q)$ the edge $ac$ is contracted and the edge $bd$ is deleted. Therefore, combining Lemma \ref{cor:contractible2} and Theorem \ref{thm:commute} we obtain the following.

\begin{corollary}\label{cor:generatingsisdgraphs}
     In every 2-connected strongly involutive self-dual map $G$ other than $K_4$ there exists a dual edge pair $e,\tau_G(e)$ such that $(G/ e)\setminus \tau_G(e)$ is a 2-connected strongly involutive self-dual map. See Figure \ref{fig:planarsplit} for an example. 
\end{corollary}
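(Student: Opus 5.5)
We transfer Corollary \ref{cor:contractible2} to the SISD side using the bijection of Theorem \ref{thm:commute}. Let $G$ be a 2-connected SISD map other than $K_4$, and set $Q:=\overline R(G)$. By Theorem \ref{thm:commute}, $Q$ is an NBQP and $G=\diag(Q)$. The NBQP $Q_4$ is the reduced radial map of $K_4$: the radial map of $K_4$ is the cube $Q_3$, and its quotient by the antipodal map is $K_4$ embedded in $\Pb^2$ with three quadrangular faces, which is $Q_4$. Since $\overline R$ is a bijection, $G\neq K_4$ forces $Q\neq Q_4$. Corollary \ref{cor:contractible2} then gives a face $abcd$ of $Q$, contractible at $ac$, such that $Q':=Q/ac$ is again an NBQP. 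Applying Theorem \ref{thm:commute} to $Q'$, the map $G':=\diag(Q')$ is a 2-connected SISD map.

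It remains to identify $G'$ with $(G/e)\setminus\tau_G(e)$, where $e$ and $\tau_G(e)$ are the two diagonals of the chosen face. By the dictionary in Table \ref{tab:nbqp-sisd-dictionary}, the face $abcd$ of $Q$ corresponds to a primal--dual pair of edges of $G$. In the cover $\widetilde Q=R(G)$, this face lifts to two quadrangles $a_1b_2c_1d_2$ and $a_2b_1c_2d_1$, with each lift vertex in the color class shown by its subscript. Choosing $V_1$ as the class that supplies the diagonals, the diagonals in $\diag(Q)$ are $e=a_1c_1$ and $\tau_G(e)=b_1d_1$. We work in the cover, where the contraction lifts to contracting $a_1c_1$ in one quadrangle and $a_2c_2$ in the other.

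We track each quadrangle of $\widetilde Q$ through the contraction.
\begin{enumerate}
\item In the quadrangle $a_1b_2c_1d_2$, the vertices $a_1,c_1\in V_1$ merge. This contracts the primal diagonal $e$, and the quadrangle disappears.
\item In the quadrangle $a_2b_1c_2d_1$, the vertices $a_2,c_2$ merge. The quadrangle collapses to the doubled path $b_1\,\overline{a_2c_2}\,d_1$, so its $V_1$-diagonal $b_1d_1=\tau_G(e)$ is deleted.
\item Every other quadrangle survives, with $a_1,c_1$ (respectively $a_2,c_2$) relabeled as the merged vertex. Its $V_1$-diagonal is therefore the corresponding edge of $G$ with endpoints identified.
\end{enumerate}
Parallel edges are kept rather than merged, which matches the definition of $G/e$.

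Applying this bookkeeping to all faces and all rotations shows $\diag(Q')=(G/e)\setminus\tau_G(e)$ as planar maps. Comparing rotation systems, which interlace by Section \ref{sec:rotation}, confirms the embeddings agree.

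The main obstacle is this last identification at the level of maps. The delicate part is the local picture at the merged vertex $\overline{a_1c_1}$. Its rotation must equal the concatenation of the rotations of $a_1$ and $c_1$ in $G/e$, with the edge $e$ removed. The two faces of $G$ that contain $e$ merge, and we must also make sure no face of $G$ containing $\tau_G(e)$ is mishandled. I would verify this with the explicit local picture of the contraction in Figure \ref{fig:facecontr}, drawn in the lifted planar map.
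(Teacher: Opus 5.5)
Your proposal is correct and follows essentially the same route as the paper. You pass to $Q=\overline R(G)\neq Q_4$ via Theorem \ref{thm:commute}, take a contractible diagonal from Corollary \ref{cor:contractible2}, and use that contracting the face $abcd$ along $ac$ contracts one diagonal and deletes the other in $\diag(Q)$; the paper states this last correspondence in one sentence, and your lifted bookkeeping in $\widetilde Q=R(G)$ (together with the check that $\overline R(K_4)=Q_4$ via the cube) spells out what it leaves implicit.
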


\begin{figure}[!ht]
    \centering
    \includegraphics[width=0.80\linewidth]{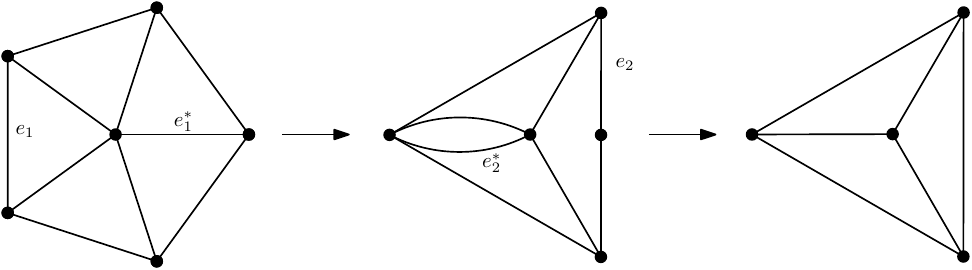}
    \caption{Reducing a strongly involutive self-dual graph to $K_4$.}
    \label{fig:planarsplit}
\end{figure}

\section{Rigidity}\label{sec:rigidity}
\subsection{Rigid bar frameworks}
The proof of Theorem \ref{thm:main_nbqp_to_diam} relies on ideas from rigidity theory. In this section we introduce the necessary notions and show that  neighborhood-coned frameworks of NBQP maps are stress-free. 

For all notions and basic results we follow Lovász \cite{lovasz2019graphs}; see Chapters 14 and 15 therein for a more detailed introduction. Recall that a vector labeling of a graph $G=(V,E)$  is a function $\vu:V\rightarrow \R^d$. We can think of it as specifying the position of the vertices of $G$ in $\R^d$. A pair $(G,\vu)$ is called a \emph{framework} if $\vu_i\ne\vu_j$ when $ij\in E$. One can think of a framework as a collection of rigid bars corresponding to the edges of $G$ that are joined at the endpoints. The bars are rigid, but we can change the position of the vertices, as long as the length of the bars remains unchanged.  

We note that often the vertices of a framework are assumed to be in some kind of general position, but this is not the case for us. We will need frameworks where two vertices are mapped to exactly the same position. On the other hand, we do not allow bars of length 0, hence the requirement that $\vu_i\ne\vu_j$ when $ij\in E$.

We say that $(G,\vu)$ is flexible if we can specify a \emph{proper flexing}, i.e., a smooth function $\vu_i:[0,1]\to\R^d$ for each $i\in V$ such that $\vu_i(0)=\vu_i$ and $|\vu_i(t)-\vu_j(t)|$ is constant for each $ij\in E$, but not constant for at least one pair of nonadjacent vertices $i,j\in V$. We say that $(G,\vu)$ is \emph{locally rigid} if no such proper flexing exists. A more general notion of movement is the notion of infinitesimal motions. A vector labeling $\vv: V\rightarrow \R^d$ is an \emph{infinitesimal motion} of $(G,\vu)$ if the following holds for every $ij\in E$.

\begin{equation}\label{eq:infinitesimal}
    (\vu_i-\vu_j)^\top (\vv_i -\vv_j)=0
\end{equation}

The initial velocities of a proper flexing form an infinitesimal motion, but not all infinitesimal motions correspond to a proper flexing. We will denote by $\mathbf{Inf}_{(G,\vu)}$ the linear space of infinitesimal motions of $(G,\vu)$. We say that $\vv$ is a \emph{rigid infinitesimal motion} if it corresponds to the initial velocities of rigid motion, that is, if $\vv_i=A\vu_i+\mathbf{b}$
for some skew-symmetric matrix $A\in\R^{d\times d}$ and some vector $\mathbf{b}\in\R^d$. A framework is \emph{infinitesimally rigid} if it has only rigid infinitesimal motions. 

A function $S:E\rightarrow \mathbb{R}$ is a \emph{stress} on $(G,\vu)$ if 

\begin{equation}\label{eq:stress}
\sum\limits_{j\in N(i)} S_{ij}(\vu_j-\vu_i)=0  
\end{equation}

for each $i\in V$. Note that we have $S_{ij} = S_{ji}$. We can think of a stress as forces acting on the edges of the framework. A force acting on an edge $ij$ is parallel to $\vu_j-\vu_i$, and therefore \eqref{eq:stress} means that each vertex is in equilibrium. (Note how this would break if we allowed $\vu_i=\vu_j$ for $ij\in E$). A framework $(G,\vu)$ is \emph{stress-free} if it carries only the identically-zero stress. We denote by $\mathbf{Str}_{(G,\vu)}$ the linear space of stresses of $(G,\vu)$.

The \emph{rigidity matrix} of $(G,\vu)$ is the matrix $R=R_{(G,\vu)}\in \R^{|E|\times d\cdot|V|}$ whose rows correspond to edges and the columns in groups of $d$ correspond to vertices. In the row of an edge $ij$ we have zeros except for the $d$ columns of $i$ and the $d$ columns of $j$, in those positions we have the coordinate values of $\vu_i-\vu_j$ and $\vu_j-\vu_i$ respectively. As $\vu_i\neq \vu_j$ when $ij\in E$, we do not have all zero rows. The right null space of $R$ corresponds to infinitesimal motions, and the left null space corresponds to stresses. This implies the following. 

\begin{equation}\label{eq:dimension}
 rk(R) = d|V|-dim(\mathbf{Inf}_{(G,\vu)}) = |E|-dim(\mathbf{Str}_{(G,\vu)})
\end{equation}

Intuitively, if we have a stress on a framework, it limits the possible movements of the framework. On the other hand, in a stress-free framework any non-rigid infinitesimal motion can be turned into a proper flexing. 

    \begin{lemma}[Lemma 15.7. from \cite{lovasz2019graphs}]\label{lem:flexing}
        If a stress-free framework is not infinitesimally rigid, then it is flexible. Furthermore, if we fix a non-rigid infinitesimal motion $\vv$, then there is a proper flexing where the initial velocities of the vertices are given by $\vv$.
    \end{lemma}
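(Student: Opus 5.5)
The plan is to realize the stress-free framework's configuration space as a smooth manifold near $\vu$ via the implicit function theorem, and then pick a curve in it with initial velocity $\vv$. Consider the edge-length map
\[
f:\R^{d|V|}\rightarrow \R^{E},\qquad f(\vx)_{ij}=|\vx_i-\vx_j|^2 .
\]
Its Jacobian at a configuration $\vx$ is $2R_{(G,\vx)}$. Since $(G,\vu)$ is stress-free, \eqref{eq:dimension} gives $rk(R_{(G,\vu)})=|E|$, so $R_{(G,\vu)}$ has full row rank. Full row rank is an open condition, and the requirement $\vx_i\ne\vx_j$ for $ij\in E$ also persists near $\vu$.

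First, I apply the implicit function theorem (regular value theorem). Near $\vu$, the level set $M=f^{-1}(f(\vu))$ is a smooth submanifold of dimension $d|V|-|E|$. Its tangent space at $\vu$ is $\ker R_{(G,\vu)}=\mathbf{Inf}_{(G,\vu)}$.

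Second, I construct a curve in $M$ with prescribed velocity. I split $\R^{d|V|}=\mathbf{Inf}_{(G,\vu)}\oplus W$, where $W$ is a complement (e.g.\ the row space of $R$). By the implicit function theorem, $M$ is locally the graph of a smooth map $h:\mathbf{Inf}_{(G,\vu)}\to W$ with $h(0)=0$ and $Dh(0)=0$. Then $\vx(t)=\vu+t\vv+h(t\vv)$ for $t\in[0,\varepsilon]$, rescaled to $[0,1]$, is smooth. It keeps every bar length fixed, satisfies $\vx(0)=\vu$, and has $\vx'(0)=\vv$.

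Third, I show this flexing is proper, i.e.\ not all nonadjacent distances stay constant. This is the step I expect to be the main obstacle, since $\vu$ need not be in general position: vertices may coincide or be affinely degenerate. For such $\vu$, the naive test ``$(\vu_i-\vu_j)^\top(\vv_i-\vv_j)\neq 0$ for some nonadjacent pair'' can fail even for non-rigid $\vv$. So I argue by contradiction instead.

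Suppose all pairwise distances $|\vx_i(t)-\vx_j(t)|$ are constant. A finite point configuration is determined up to an isometry of $\R^d$ by its pairwise distances, so each $\vx(t)$ lies in the orbit $\mathcal O=\{(g\vu_i)_{i\in V}: g\in \mathrm{Isom}(\R^d)\}$. This orbit is the orbit of a Lie group action, hence an (embedded, since the group action is proper) smooth submanifold of $\R^{d|V|}$. Its tangent space at $\vu$ is the image of the Lie algebra, namely $\{(A\vu_i+\mathbf b)_i : A^\top=-A,\ \mathbf b\in\R^d\}$, which is exactly the space of rigid infinitesimal motions. The smooth curve $\vx(t)$ lies in $\mathcal O$, so $\vv=\vx'(0)$ is tangent to $\mathcal O$ and therefore rigid. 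This contradicts the choice of $\vv$.

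Hence some nonadjacent distance varies, and $\vx$ is a proper flexing with initial velocities $\vv$. Since $(G,\vu)$ is not infinitesimally rigid, a non-rigid $\vv\in\mathbf{Inf}_{(G,\vu)}$ exists, so the framework is flexible.
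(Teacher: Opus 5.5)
Your proof is correct. The paper itself gives no proof of this lemma; it only cites Lovász. Your first two steps are the standard implicit-function-theorem argument behind that citation: stress-freeness gives full row rank of $R_{(G,\vu)}$, so the constraint set is locally a manifold with tangent space $\mathbf{Inf}_{(G,\vu)}$, and it contains a smooth curve through $\vu$ with velocity $\vv$.

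Your third step is worth keeping as written. The paper applies the lemma (in Lemma~\ref{lem:injective}) to a framework with $\vu_{a_1}=\vu_{a_2}$. For such a configuration the complete framework on $\vu$ has non-rigid infinitesimal motions, so a first-order test of properness can genuinely fail. Your argument avoids this. A curve along which all distances stay constant lies in the orbit $\mathcal O$ of $\vu$ under $\mathrm{Isom}(\R^d)$. This orbit is an embedded submanifold because the action is proper, and its tangent space at $\vu$ is exactly the space of rigid infinitesimal motions. This is precisely what makes the lemma valid in the degenerate setting where the paper uses it.

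One small repair is needed. Rescaling $[0,\varepsilon]$ linearly onto $[0,1]$ changes the initial velocity to $\varepsilon\vv$, not $\vv$. Instead, compose with a smooth reparametrization $\phi:[0,1]\to[0,\varepsilon)$ satisfying $\phi(0)=0$ and $\phi'(0)=1$, for instance $\phi(s)=\varepsilon\tanh(s/\varepsilon)$. Your properness argument still applies to the reparametrized curve, since it only uses the one-sided velocity at $0$ and works on any initial segment $[0,\varepsilon']$. Alternatively, note that the paper's application only needs a positive multiple of $\vv$: the sign condition $\vv_{a_1}-\vv_{a_2}=-\lambda\vn$ with $\lambda>0$ survives scaling.
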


One of the classical results on stress-freeness is due to Cauchy. He showed that the framework given by the skeleton of a 3-dimensional polytope is stress-free. In a planar or projective-planar map whose edges are 2-colored, we call a node \emph{quiet} if the edges of each color emanating from it are consecutive in the rotation of the vertex in the embedding. The proof of Cauchy's result is based on the existence of a quiet node in any 2-colored planar map. We will need the following variant of this idea in the projective plane. 

\begin{lemma}[Exercise 2.6 in \cite{lovasz2019graphs}]\label{lemma:quiet}
    In every simple graph embedded in the projective plane whose edges are 2-colored, there is at least one quiet node.
\end{lemma}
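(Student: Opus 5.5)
The plan is the classical double-counting argument behind Cauchy's lemma, with the Euler characteristic of $\Pb^2$ in place of that of the sphere. Fix a simple graph $H=(V,E)$ embedded in $\Pb^2$ with a 2-coloring of its edges. Quietness is a property of the embedding, so we read the rotation at each vertex and the facial walks from it. At a corner, i.e. a pair of consecutive edges in the rotation at a vertex, record a \emph{color change} if the two edges have different colors.

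\textbf{Degenerate cases.} A vertex of degree at most $1$ is quiet, and so is a vertex all of whose edges have the same color. So we may assume that every vertex has degree at least $2$ and sees both colors. For $v\in V$ let $c(v)$ be the number of color changes in the cyclic rotation at $v$. This number is always even. It is $0$ or $2$ exactly when $v$ is quiet, so under our assumption it would be at least $4$ at every non-quiet vertex.

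\textbf{Counting.} Suppose, for contradiction, that no vertex is quiet, and let $C$ be the total number of color changes.
\begin{itemize}
\item Counting at the vertices gives $C=\sum_v c(v)\ge 4|V|$.
\item Counting over faces: each corner belongs to exactly one facial walk. Let a face $f$ have total boundary length $k_f$, summed over its boundary walks, with each edge counted once per side. The color changes along a closed walk come in an even number, so a walk of length $k$ carries at most $k$ changes if $k$ is even and at most $k-1$ if $k$ is odd.
\item Since $H$ is simple with minimum degree at least $2$, every face has $k_f\ge 3$. Hence the face contributes at most $2k_f-4$ changes: this is exact for $k_f=3,4$, and for $k_f\ge 4$ we have $k_f\le 2k_f-4$.
\item Summing and using $\sum_f k_f=2|E|$ gives $C\le 4|E|-4|F|$.
\end{itemize}
Combining the two bounds yields $|V|-|E|+|F|\le 0$. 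This contradicts Euler's formula for the projective plane, $|V|-|E|+|F|=1$ for a connected cellular embedding.

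\textbf{Main obstacle.} The main work is checking that the counting survives when the embedding is not cellular or $H$ is disconnected.
\begin{itemize}
\item In those cases Euler's relation becomes the inequality $|V|-|E|+|F|\ge 1$. Adding components only increases the left-hand side. A non-cellular face, such as a face containing the crosscap or an annular face, only increases the count compared with the cellular situation.
\item A face may have several boundary walks. The per-face bound still holds: a face whose boundary consists of several walks has $k_f\ge 6$, since each walk has length at least $3$ in a simple graph of minimum degree $2$. It therefore carries at most $k_f\le 2k_f-4$ changes.
\item One must also check that each facial walk of a simple graph with minimum degree $2$ really has length at least $3$. A walk of length $1$ or $2$ would require a loop, a pair of parallel edges, or a degree-$1$ vertex, and all of these are excluded.
\end{itemize}
Once these points are verified, the contradiction shows that some vertex is quiet.
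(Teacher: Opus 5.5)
Your proof is correct. It is the standard Cauchy-type double counting of color changes at corners, with the Euler characteristic $1$ of $\Pb^2$ in place of $2$. This is the argument the cited exercise in Lov\'asz intends; the paper itself gives no proof beyond that citation and the remark that the lemma is the projective-plane variant of Cauchy's quiet-node lemma.

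The one point you state loosely is $|V|-|E|+|F|\ge 1$ for a non-cellular or disconnected embedding. It can be justified in one line: add edges inside faces until the embedding is connected and cellular, where the value is $1$. Each added edge either splits a face, leaving $|V|-|E|+|F|$ unchanged, or does not, decreasing it by one.
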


\subsection{Rigidity of NBQP maps in $\R^3$}

The following analog of Cauchy's theorem plays an important part in the proof of Theorem \ref{thm:main_nbqp_to_diam}.  We show that if we have a neighborhood-coned framework in $\R^3$ which corresponds to a NBQP map, then it is stress-free. 

\begin{lemma}\label{lem:stressfree}
    Suppose $Q=(V,E)$ is a NBQP map with a rotation system $\rho$ and $\vu:V\rightarrow \R^3$ is a vector labeling of $Q$ with the following properties:
    \begin{enumerate}[label=\alph*)]
        \item $(Q,\vu)$ is neighborhood-coned.
        \item The cone rotation system of $(Q,\vu)$ matches the rotation system $\rho$.
    \end{enumerate}
    Then $(Q,\vu)$ is stress-free.
\end{lemma}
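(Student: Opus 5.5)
The plan is a Cauchy-type sign-change argument, combining a local geometric statement about each pointed neighborhood cone with the global combinatorial statement of Lemma~\ref{lemma:quiet}. Suppose, for contradiction, that $S$ is a nonzero stress on $(Q,\vu)$. Let $E^{\ne}=\{ij\in E: S_{ij}\neq 0\}$. Color each edge of $E^{\ne}$ by the sign of $S_{ij}$, and let $H$ be the subgraph of $Q$ formed by $E^{\ne}$ and the vertices incident to it. Then $H$ is a nonempty simple graph with no isolated vertices. It inherits an embedding in $\Pb^2$ from $Q$, and the local cyclic order of its edges at a vertex $i$ is the restriction of the rotation of $i$ in $Q$. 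Being quiet does not change if a rotation is reversed, so the ambiguity between the two rotation systems of $Q$ is harmless. By assumption b), this cyclic order is also the order of the corresponding extreme rays around the neighborhood cone $\Kc_i$.

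\textbf{Local step.} I will show that no vertex $i$ of $H$ is quiet, i.e., around $\Kc_i$ the signs of the nonzero stresses change at least four times. The stress condition at $i$ reads $\sum_{j} S_{ij}(\vu_j-\vu_i)=0$, summed over edges $ij\in E^{\ne}$.

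If all these coefficients have the same sign, we have a nontrivial nonnegative combination of nonzero vectors spanning extreme rays of $\Kc_i$ that equals $\mathbf 0$. This is impossible because $\Kc_i$ is pointed.

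If the signs change exactly twice, the positive edges form a consecutive arc in the cyclic order of the extreme rays, and the negative edges form the complementary arc. I will then produce a plane through the apex $\vu_i$ that strictly separates the positive rays from the negative ones.

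\begin{itemize}
\item When $\Kc_i$ is $3$-dimensional, take a plane section transverse to the cone. The extreme rays meet it in the vertices of a convex polygon, in the same cyclic order. The rays with nonzero stress form a subset of these vertices, still in convex position and still split into two cyclically consecutive arcs. A line through the interiors of the two polygon edges where the arcs meet strictly separates the two arcs.
\item When $\Kc_i$ is $2$-dimensional, it has only two extreme rays, which are linearly independent, so the equation at $i$ forces both stresses to vanish.
\end{itemize}

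Given the separating plane with normal $\vn$, take the inner product of the equilibrium equation with $\vn$. Every term $S_{ij}\,\vn^\top(\vu_j-\vu_i)$ is then strictly positive (or every one strictly negative), contradicting that the sum is $0$. Assumption a), that each $\cone_{\vu_i}(\vu_j)$ is an extreme ray, is used here: it guarantees that every vector $\vu_j-\vu_i$ appears as a vertex of the polygon and not as an interior direction.

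\textbf{Global step.} Applying Lemma~\ref{lemma:quiet} to the $2$-colored graph $H$ embedded in the projective plane gives a quiet node, which contradicts the local step. Hence $S\equiv 0$.

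The main obstacle is making sure Lemma~\ref{lemma:quiet} really supplies a quiet node among the vertices of $H$ that have edges, not a vacuously quiet isolated vertex. It must also apply even though the induced embedding of $H$ need not be cellular: faces may fail to be disks, and $H$ may even lie in a disk. To settle this, I would rerun the standard counting argument. Each non-quiet vertex contributes at least $4$ sign changes. Each face of size $k$ contributes at most $k$ sign changes, and fewer when $k$ is odd. Combining these counts with the Euler-type inequality $|V(H)|-|E(H)|+|F(H)|\ge 1$ for graphs in $\Pb^2$, which holds for non-cellular embeddings as well, yields a contradiction. So $H$ itself has a quiet vertex, and it is one of the non-isolated vertices of $H$.
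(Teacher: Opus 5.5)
Your proposal is correct and follows the paper's proof: you color a nonzero stress by sign, take a quiet node from Lemma~\ref{lemma:quiet}, and use the cone rotation to get a plane through that node that separates the positive bars from the negative ones, so the equilibrium equation dotted with the normal cannot vanish. Your extra care fills in details the paper leaves implicit: discarding vertices with no nonzero-stress edges so the quiet node is not vacuous, handling the monochromatic case by pointedness, and building the separating plane from a transverse polygonal section.
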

\begin{proof}
    We adopt the steps of the famous proof of Cauchy's stress-freeness theorem. Suppose $(Q,\vu)$ carries a nonzero stress $S$. Color the edges of $Q$ red where $S_{ij}>0$ and blue if $S_{ij}<0$ and also apply this coloring to the bars of the framework. If we delete all edges where $S_{ij}=0$, then we end up with a 2-colored graph. From Lemma \ref{lemma:quiet} we see that there must be a quiet vertex in this graph, that is, a vertex $i$ where the red (and blue) edges are consecutive around $i$. As  $(Q,\vu)$ is neighborhood-coned and the cone rotation system matches the rotation system $\rho$, this means that there is a plane $H$ that goes through $\vu_i$ and separates the red and blue edges emanating from $\vu_i$, see Figure \ref{fig:stressfreeproof}. The idea is to show that $i$ cannot be in equilibrium, as all edges are pushing or pulling it into the same half-space bounded by $H$. Let $\vn$ be the normal vector of $H$ pointing to the half-space containing the red edges. For each red edge $ij$ we have $\vn^{\top}(\vu_j-\vu_i)>0$ and hence $S_{ij}\vn^{\top}(\vu_j-\vu_i)>0$. On the other hand, for each blue edge $\vn^{\top}(\vu_j-\vu_i)<0$, which also implies $S_{ij}\vn^{\top}(\vu_j-\vu_i)>0$. For the uncolored edges incident to $i$, we simply have $S_{ij}\vn^{\top}(\vu_j-\vu_i)=0$. Therefore, 
    $\sum\limits_{j\in N(i)}S_{ij}\vn^{\top}(\vu_j-\vu_i)>0$  and hence $\vn^{\top}( \sum\limits_{j\in N(i)}S_{ij}(\vu_j-\vu_i))>0$.     As $S$ is a stress, we have $\sum\limits_{j\in N(i)}S_{ij}(\vu_j-\vu_i)=0$, a contradiction. 
  
\begin{figure}[!ht]
    \centering
    \includegraphics[width=0.3\linewidth]{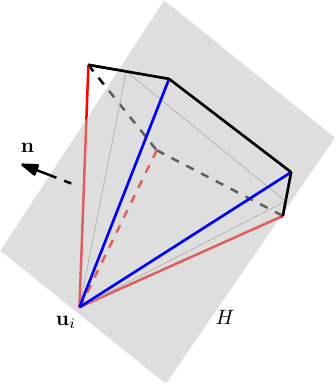}
    \caption{A quiet node cannot be in equilibrium. }
    \label{fig:stressfreeproof}
    
\end{figure}
    
\end{proof}

    Another way to show Lemma \ref{lem:stressfree} is by using the 3-degeneracy of the diameter graph, which follows from the fact that any subgraph $Q'=(V',E')$ of an NBQP graph  has at most $2|V'|-2$ edges (See Lemma 4.4 in Part I). If we have a vertex of degree at most 3 so that the vectors pointing to its neighbors are independent, then we must have 0 on these edges in any stress. Any three distinct extreme rays of a pointed cone in $\R^3$ have linearly independent directions. Hence, we can just delete such a vertex and the stress-space does not change. Repeatedly applying this shows that the only stress is the all zero stress. Although this approach is more elementary, we believe that the rigidity proof is more insightful.

\section{Existence of diameter graphs}\label{sec:qtexsists}
We are now ready to show Theorem \ref{thm:main_nbqp_to_diam}. That is, our goal is to show that for every NBQP map $Q=(V,E)$ there is an extremal set $X\subset \R^3$ such that $\qt(X)=Q$. By Lemma \ref{lem:diameterispolyhedral} we are looking for a vector labeling $\vu:V\rightarrow \R^3$ which is a diameter realization of $Q$ in the following sense:

\begin{enumerate}[label=\alph*)]
    \item $\vu$ is injective.
        \item $ij\in E(Q)$ if and only if $\vu_i\vu_j$ is a diameter of the point set $X_\vu=\{\vu_i\mid i\in [n] \}$.
        \item The cone rotation system of $(Q,\vu)$ matches one of the rotation systems of $Q$.
\end{enumerate}
    
We will prove the theorem inductively. As discussed in Section \ref{sec:generating}, we can generate any NBQP map starting from $Q_4$ and performing vertex splits. Hence it is enough to show that every vertex split can also be performed geometrically. Suppose we are splitting a vertex $v$ into $v_1,v_2$. We will first simply place both $v_1$ and $v_2$ at the original position of $v$. Then we will consider the rigid bar framework given by the desired diameter pairs. We show that there is a flexing of this framework that pulls $v_1$ and $v_2$ apart and results in the desired diameter realization. See Figure \ref{fig:steps} for an example.

\begin{figure}[!ht]
    \centering
    \includegraphics[width=0.2\linewidth]{figures/q4.pdf}
    \includegraphics[width=0.2\linewidth]{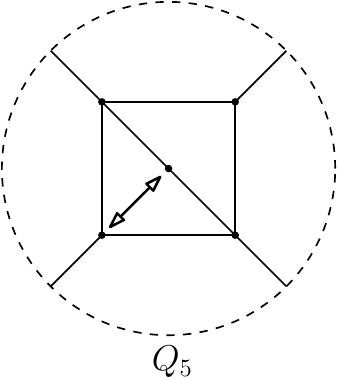}
    \includegraphics[width=0.2\linewidth]{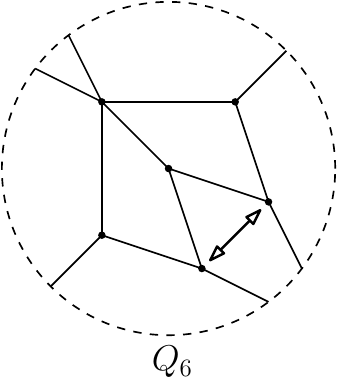}
    \hspace{100px}
    \includegraphics[width=0.2\linewidth]{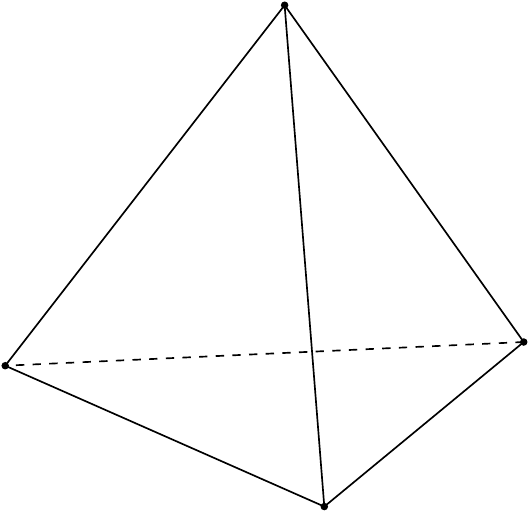}
    \includegraphics[width=0.2\linewidth]{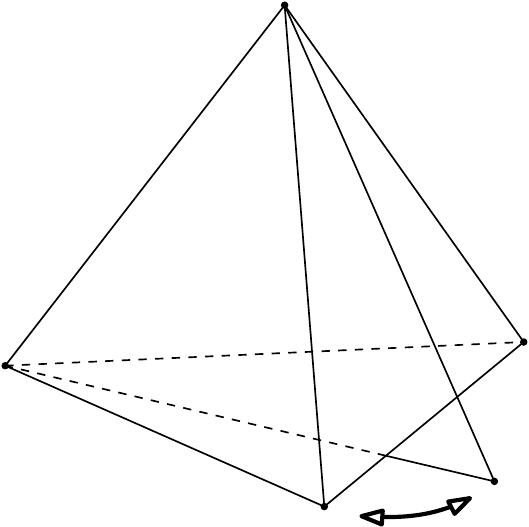}
    \includegraphics[width=0.2\linewidth]{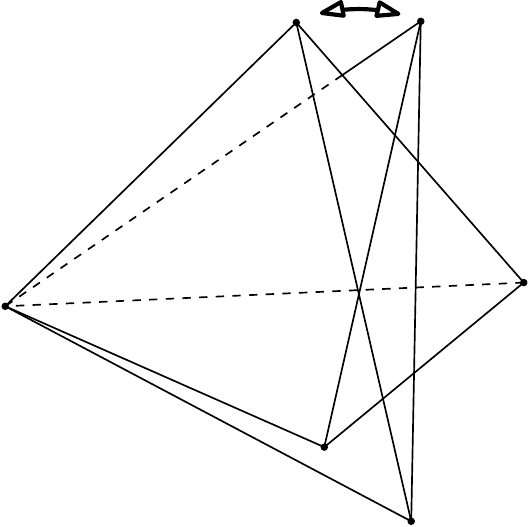}
    
    \caption{Vertex splitting on $\Pb^2$ and the corresponding steps in creating the diameter realization.}
    \label{fig:steps}
\end{figure}

\begin{proof}[Proof of Theorem \ref{thm:main_nbqp_to_diam}]

     We proceed by induction on the number of vertices. For $n=4$ there is only one quadrangulation of the projective plane, namely $Q_4$. We pick the rotation system $\rho$ shown in Figure \ref{fig:basecase}. If we map the vertices to the four vertices of a regular tetrahedron, as shown in Figure \ref{fig:basecase}, we get the desired diameter quadrangulation and we also get the right rotations around the neighborhood-cones.   

\begin{figure}[!ht]
    \centering
    \includegraphics[width=0.7\linewidth]{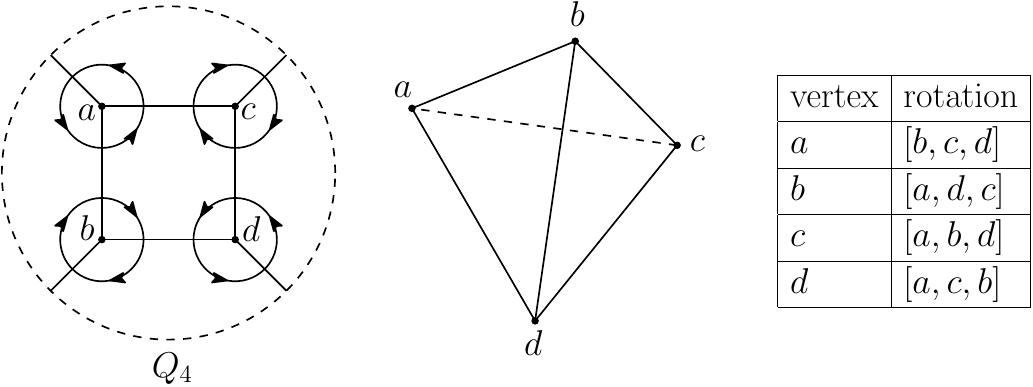}
    \caption{The base case, $Q_4$ and the corresponding extremal point set, the regular tetrahedron.}
    \label{fig:basecase}
\end{figure}

    Assume that $n>4$ and the rotation system $\rho$ is fixed. By Corollary \ref{cor:contractible2} there is a quadrangle $a_1b_1a_2b_2$ such that the diagonal $a_1a_2$ is contractible.  As we can change the roles of $b_1$ and $b_2$, and also the roles of $a_1$ and $a_2$, we might assume the following.
    \begin{enumerate}
        \item The rotation of $a_1$ in $\rho$ is  $[b_1,i_1,\dots i_k,b_2]$, and the rotation of $a_2$ is $[b_2,j_1,\dots, j_t,b_1]$ for some $i_1,\dots,i_k,j_1,\dots j_t\in V$.
        \item In the rotation of $b_1$ in $\rho$ the vertex $a_2$ is followed by $a_1$ and in the rotation of $b_2$ the vertex $a_1$ is followed by $a_2$.
    \end{enumerate}

     Here $k$ or $t$ may be zero. The second condition fixes the choice also when $k=t=0$. At a vertex of degree two, we use the unique cyclic order on its two neighbors.
    
     Let $a$ denote the contracted vertex in $Q/a_1a_2$. Then there is a rotation system $\overline{\rho}$ of $Q/a_1a_2$ such that the rotation of $a$  is $[b_1,i_1,\dots i_k,b_2,j_1,\dots,j_t]$. In $\overline{\rho}$ the rotations of $b_1$ and $b_2$ are obtained by replacing $a_1$ and $a_2$ with a single $a$. For the rest of the vertices, the  rotation in $\overline{\rho}$ is obtained by replacing $a_1$ and $a_2$ with $a$, if one of them is present. (Note that since $a_1a_2$ is a contractible diagonal, the only common neighbors of $a_1$ and $a_2$ are $b_1$ and $b_2$). Let $N_1=N(a_1)\setminus\{b_1,b_2\}$ and $N_2=N(a_2)\setminus\{b_1,b_2\}$. See Figure \ref{fig:rotchanges}. 
    
    \begin{figure}[!h]
        \centering
            \includegraphics[width=0.6\linewidth]{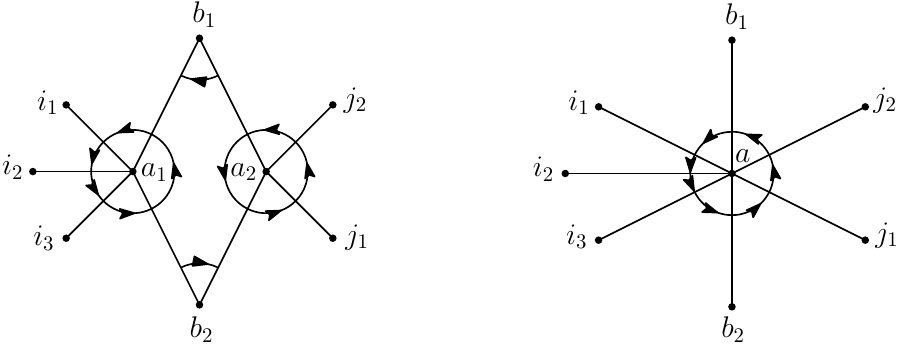}
        \caption{Changes in the rotation after contracting a diagonal. Note the direction at $b_1$ and $b_2$.}
        \label{fig:rotchanges}
    \end{figure}
    
    Note that if we have a realization for a given rotation system, then we also have a realization for the other rotation system. We can simply reflect the realization, and every cone rotation reverses. Hence, by induction there exists a vertex labeling $\vu'$ that is a diameter realization of $Q/a_1a_2$ satisfying $a),b)$ and $c)$, with cone rotation system $\overline{\rho}$ and diameter equal to $1$. Now consider the framework $(Q,\vu)$, where $\vu_i=\vu'_i$ for $i\in V\setminus\{a_1,a_2\}$, and $\vu_{a_1}=\vu_{a_2}=\vu'_{a}$.   That is, we split $\vu_{a}$ into two vertices and distribute the neighbors according to the rotation. As $a_1a_2$ is a contractible diagonal, $a_1a_2$ is not an edge and $\vu_{a_1}=\vu_{a_2}$ is not a problem in the sense that $(Q,\vu)$ is still a framework.

    Note that $\vu$ almost satisfies properties $a),b)$ and $c)$. For example each edge of $Q$ is represented by a pair of points at distance 1 and the diameter of the point set $X_\vu$ is 1. The problem is that $\vu_{a_1}$ and $\vu_{a_2}$ are in the same position, which is not allowed. This also means that some pairs of points are at distance 1 which should not be. These are the pairs of the form $(\vu_{a_1},\vu_j)$ where $j\in N_2$ and the pairs $(\vu_{a_2},\vu_i)$ where $i\in N_1$. To remedy this problem we only need to find a proper flexing of the framework $(Q,\vu)$, such that it moves $a_1$ and $a_2$ apart, the diameter does not increase, the positions of the other vertices stay distinct, and the cone rotation system of the resulting framework matches the rotation system $\rho$.  

    We break the argument for the existence of such a flexing into a sequence of lemmas. First we show that $(Q,\vu)$ is stress-free. Then we show that there is an infinitesimal motion that pulls $a_1$ and $a_2$ apart in the right relative direction. Then we turn the infinitesimal motion into a real flexing. Finally we show that a small initial part of the flexing satisfies the required properties $a),b)$ and $c)$.

Let $H$ denote the plane through $\vu'_a,\vu_{b_1},\vu_{b_2}$, and fix the normal vector of $H$ given by $\vn=(\vu_{a_1}-\vu_{b_1})\times(\vu_{a_1}-\vu_{b_2})$. These three points are non-collinear, hence $\vn\neq\mathbf 0$. Let
    \[
        H_1=\{\vy\in\R^3\mid\vn^\top(\vy-\vu_{a_1})>0\},
        \qquad
        H_2=\{\vy\in\R^3\mid\vn^\top(\vy-\vu_{a_1})<0\}.
    \]
    That is, $H_1$ and $H_2$ are the two half-spaces bounded by $H$. The cone rotation $[b_1,i_1,\dots,i_k,b_2,j_1,\dots,j_t]$, viewed counterclockwise from $\vu'_a=\vu_{a_1}$, gives $\vu_i\in H_1$ for $i\in N_1$ and $\vu_j\in H_2$ for $j\in N_2$, see Figure  \ref{fig:geosplit}.

    \begin{figure}[!ht]
        \centering
        \includegraphics[width=0.4\linewidth]{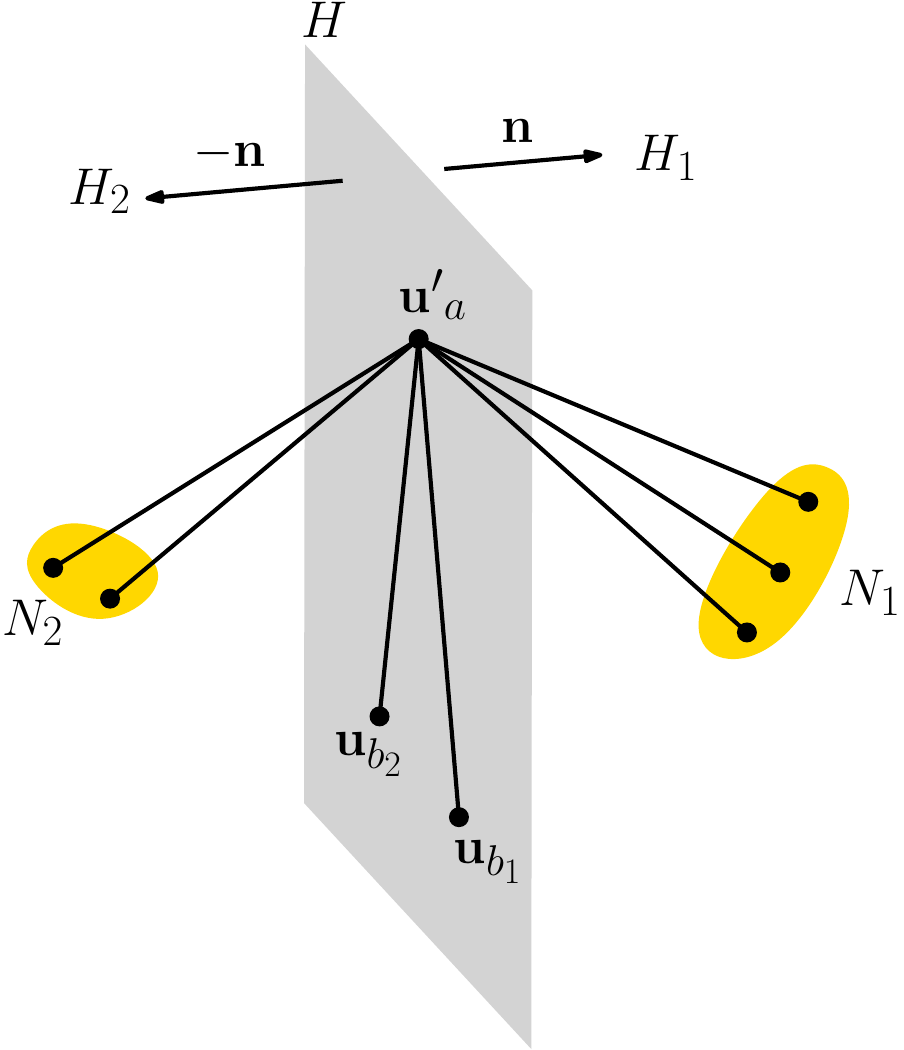}\hspace{20px}
        \includegraphics[width=0.4\linewidth]{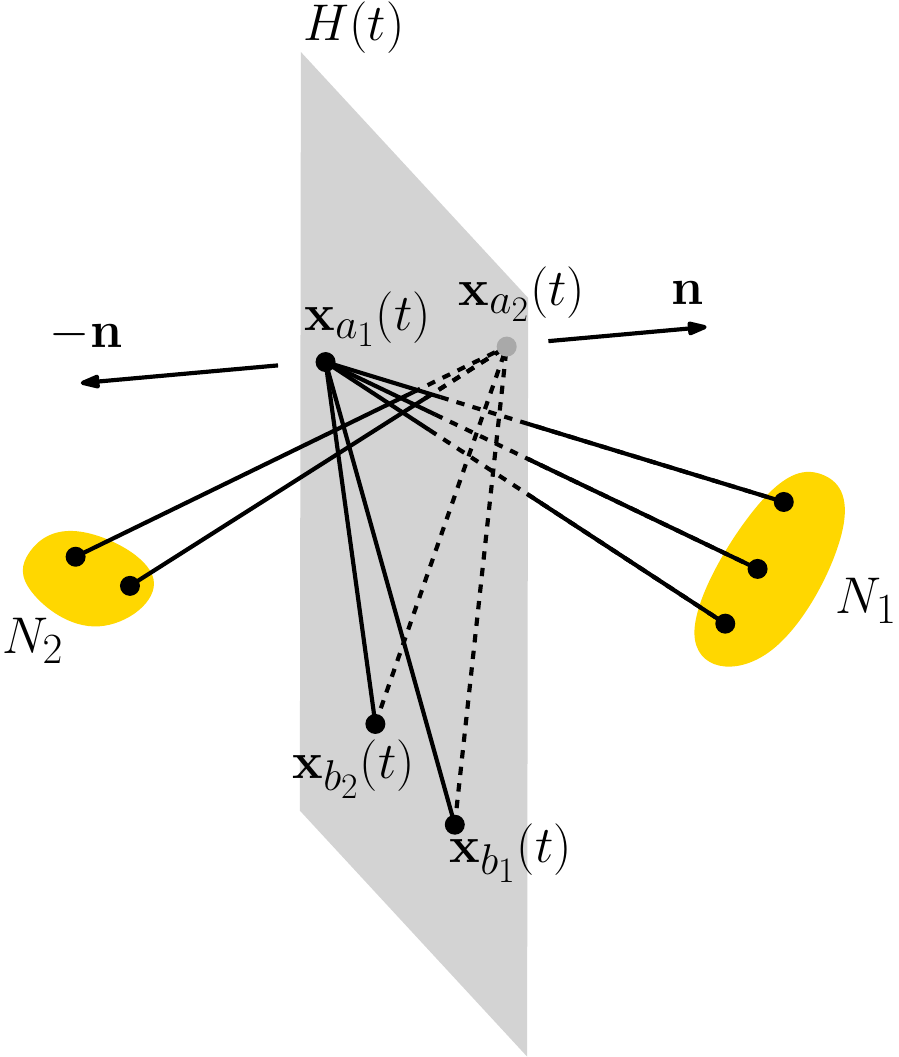}
        \caption{Pulling $a_1$ and $a_2$ apart. $a_1$ moves closer to $N_2$ and $a_2$ moves closer to $N_1$.}
        \label{fig:geosplit}
    \end{figure}

    \begin{lemma}\label{lem:splitstressfree}
        $(Q,\vu)$ is stress-free.
    \end{lemma}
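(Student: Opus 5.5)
We want to show that the framework $(Q,\vu)$, in which $\vu_{a_1}=\vu_{a_2}=\vu'_a$ and all other vertices keep their positions from $\vu'$, carries no nonzero stress. Lemma \ref{lem:stressfree} does not apply directly. The neighborhood cones at $a_1$ and $a_2$ are fine (each is a sub-cone of $\Kc_a$ in $(Q/a_1a_2,\vu')$), but the cones at $b_1,b_2$ now contain two coinciding extreme directions. Hence $(Q,\vu)$ is not neighborhood-coned in the strict sense. Instead I would reduce to stress-freeness of the contracted framework $(Q/a_1a_2,\vu')$. That framework is stress-free by Lemma \ref{lem:stressfree}: by the induction hypothesis it is neighborhood-coned, and its cone rotation system is $\overline\rho$.

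Let $S$ be a stress on $(Q,\vu)$. Define $\overline S$ on $E(Q/a_1a_2)$ as follows.
\begin{itemize}
\item $\overline S_{ai}=S_{a_1i}$ for $i\in N_1$, and $\overline S_{aj}=S_{a_2j}$ for $j\in N_2$.
\item $\overline S_{ab_\ell}=S_{a_1b_\ell}+S_{a_2b_\ell}$ for $\ell=1,2$.
\item $\overline S$ agrees with $S$ on all other edges.
\end{itemize}
Since $\vu_{a_1}=\vu_{a_2}=\vu'_a$, the equilibrium conditions carry over.
\begin{itemize}
\item At $a$: the sum of the equilibrium equations of $a_1$ and $a_2$ gives exactly the equilibrium equation of $a$ for $\overline S$.
\item At $b_\ell$: the two terms $S_{a_1b_\ell}(\vu_{a_1}-\vu_{b_\ell})$ and $S_{a_2b_\ell}(\vu_{a_2}-\vu_{b_\ell})$ merge into the single term $\overline S_{ab_\ell}(\vu'_a-\vu_{b_\ell})$.
\item At every other vertex: nothing changes, because $N_1\cap N_2=\emptyset$ (contractibility) and no edge is merged.
\end{itemize}
So $\overline S$ is a stress on $(Q/a_1a_2,\vu')$, hence $\overline S=0$. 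This gives $S_e=0$ for every edge not incident to $a_1$ or $a_2$, together with $S_{a_1i}=0$ for $i\in N_1$, $S_{a_2j}=0$ for $j\in N_2$, and $S_{a_1b_\ell}=-S_{a_2b_\ell}$.

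It remains to kill $S_{a_1b_1}$ and $S_{a_1b_2}$. The equilibrium equation at $a_1$ now reduces to
\[
S_{a_1b_1}(\vu_{b_1}-\vu_{a_1})+S_{a_1b_2}(\vu_{b_2}-\vu_{a_1})=0 .
\]
The vectors $\vu_{b_1}-\vu'_a$ and $\vu_{b_2}-\vu'_a$ are distinct extreme rays of the pointed cone $\Kc_a$, so they are linearly independent. (This is the same fact already used in the text: $\vu'_a,\vu_{b_1},\vu_{b_2}$ are non-collinear.) Therefore both coefficients vanish, and so do $S_{a_2b_1}$ and $S_{a_2b_2}$. Hence $S\equiv0$.

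The only point needing care is checking that the merged stress really satisfies equilibrium at every vertex of $Q/a_1a_2$, in particular at $b_1$ and $b_2$. This depends on $a_1$ and $a_2$ being placed at the same point, so that their two bars to $b_\ell$ are collinear and can be combined into one. It also uses that $a_1$ and $a_2$ have no common neighbors other than $b_1,b_2$, so no other bars are merged. Once this bookkeeping is done, the rest is immediate.
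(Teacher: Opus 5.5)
Your proposal is correct and is essentially the paper's own argument. Both merge the stresses on the doubled bars $a_1b_\ell, a_2b_\ell$ into a stress on the contracted framework $(Q/a_1a_2,\vu')$, apply Lemma \ref{lem:stressfree} there, and then eliminate the remaining pair $S_{a_1b_\ell}=-S_{a_2b_\ell}$ using equilibrium at $a_1$ and the fact that $\vu_{b_1}-\vu_{a_1}$ and $\vu_{b_2}-\vu_{a_1}$ are not parallel. Your vertex-by-vertex check of equilibrium and your pointedness argument for linear independence simply spell out details the paper leaves implicit.
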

    \begin{proof}
     Suppose that $S:E \rightarrow\mathbb{R}$ is a stress on $(Q,\vu)$. Let $S'$ be the modified version of $S$ where we set $S'_{ab_1}=S_{a_1b_1}+S_{a_2b_1}$ and $S'_{ab_2}=S_{a_1b_2}+S_{a_2b_2}$. Clearly if $S$ is a stress for $(Q,\vu)$, then $S'$ is a stress for $(Q/a_1a_2,\vu')$. On the other hand $(Q/a_1a_2,\vu')$ is stress-free by Lemma \ref{lem:stressfree}, hence $S'$ must be the zero-stress, i.e. $S_{a_1b_1}+S_{a_2b_1}=0$, $S_{a_1b_2}+S_{a_2b_2}=0$ and $S$ is zero on the other edges. By the definition of a stress, this implies that $S_{a_1b_1}(\vu_{b_1}-\vu_{a_1})+S_{a_1b_2}(\vu_{b_2}-\vu_{a_1})=0$. As $\vu_{b_1}-\vu_{a_1}$ and  $\vu_{b_2}-\vu_{a_1}$ are not parallel $S$ is also zero on $a_1b_1$ and $a_1b_2$ and similarly on $a_2b_1,a_2b_2$. Hence $S$ is the zero-stress. Therefore, $(Q,\vu)$ is stress-free. 
    
    \end{proof}
    We note that the previous lemma is essentially the same as Whiteley's argument in \cite{whiteley1990vertex} where he studies a similar vertex splitting procedure for generic frameworks. Our framework is very much non-generic, hence we cannot directly apply those results. 
     
     \begin{lemma}\label{lem:infinitesimalexists}
         There is a non-rigid infinitesimal motion $\vv$ of $(Q,\vu)$ such that $\vv_{a_1}-\vv_{a_2}\neq \mathbf{0}$. Furthermore, we can pick $\vv$ so that $\vn^\top (\vv_{a_1}-\vv_{a_2})<0$.
     \end{lemma}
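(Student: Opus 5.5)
We will obtain the infinitesimal motion by a dimension count on the rigidity matrix, using Lemma \ref{lem:splitstressfree}. Let $n=|V|$. Since $Q$ is a quadrangulation of the projective plane, Euler's formula gives $|E|=2n-2$. By Lemma \ref{lem:splitstressfree} and \eqref{eq:dimension}, $\mathrm{rk}(R_{(Q,\vu)})=|E|=2n-2$. Hence
\[
\dim\mathbf{Inf}_{(Q,\vu)}=3n-(2n-2)=n+2 .
\]
The rigid infinitesimal motions form a space of dimension at most $6$. So for $n\ge 5$ the space of infinitesimal motions is strictly larger than the rigid one, but this alone does not force $a_1$ and $a_2$ to separate. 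The count has to be run in a way that involves the pair $a_1,a_2$ directly.

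\textbf{Comparison with the contracted framework.} Consider $(Q/a_1a_2,\vu')$. It is stress-free by Lemma \ref{lem:stressfree}, so its infinitesimal motion space has dimension $3(n-1)-(2n-4)=n+1$. Every infinitesimal motion $\vv'$ of the contracted framework lifts to an infinitesimal motion $\vv$ of $(Q,\vu)$ by setting $\vv_{a_1}=\vv_{a_2}=\vv'_a$. This works because $\vu_{a_1}=\vu_{a_2}=\vu'_a$, so every edge constraint at $a_1$ or $a_2$ is one of the original constraints at $a$. Conversely, any infinitesimal motion $\vv$ of $(Q,\vu)$ with $\vv_{a_1}=\vv_{a_2}$ descends to an infinitesimal motion of the contracted framework. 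So the motions with $\vv_{a_1}=\vv_{a_2}$ form a subspace of dimension exactly $n+1$. Since $\dim\mathbf{Inf}_{(Q,\vu)}=n+2$, there is an infinitesimal motion with $\vv_{a_1}-\vv_{a_2}\neq\mathbf 0$. Rigid motions satisfy $\vv_{a_1}=A\vu_{a_1}+\mathbf b=\vv_{a_2}$, so this motion is automatically non-rigid. This proves the first claim.

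\textbf{Direction of the separation.} Let $\mathbf w=\vv_{a_1}-\vv_{a_2}$. Both $a_1$ and $a_2$ are adjacent to $b_1$ and $b_2$, and they sit at the same point. Subtracting the edge constraints \eqref{eq:infinitesimal} for $a_1b_m$ and $a_2b_m$ gives $(\vu_{a_1}-\vu_{b_m})^\top\mathbf w=0$ for $m=1,2$. The vectors $\vu_{a_1}-\vu_{b_1}$ and $\vu_{a_1}-\vu_{b_2}$ are independent, so $\mathbf w$ is parallel to their cross product, which is $\vn$. Thus $\mathbf w=\lambda\vn$ with $\lambda\neq0$. If $\lambda>0$, replace $\vv$ by $-\vv$, which is still a non-rigid infinitesimal motion. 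Then $\vn^\top(\vv_{a_1}-\vv_{a_2})=\lambda|\vn|^2<0$, as required.

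\textbf{Main obstacle.} The delicate point is the dimension bookkeeping: one must confirm that the lifting and descending maps are mutually inverse bijections onto the subspace $\{\vv_{a_1}=\vv_{a_2}\}$. One must also confirm that the edge count $2n-4$ of $Q/a_1a_2$ is correct, which holds because the contraction is simple and the contracted map is again an NBQP. If equality of dimensions were in doubt, a cruder alternative would be to show only that the restriction map $\vv\mapsto\vv_{a_1}-\vv_{a_2}$ is nonzero on $\mathbf{Inf}_{(Q,\vu)}$, which follows from the strict inequality $n+1<n+2$.
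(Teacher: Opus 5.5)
Your proposal is correct and follows the paper's proof essentially step for step. It compares $\dim\mathbf{Inf}_{(Q,\vu)}=n+2$ with $\dim\mathbf{Inf}_{(Q/a_1a_2,\vu')}=n+1$ using stress-freeness, then subtracts the edge constraints at $b_1,b_2$ to force $\vv_{a_1}-\vv_{a_2}$ to be parallel to $\vn$, and flips the sign of $\vv$ if needed. You also note explicitly that rigid motions satisfy $\vv_{a_1}=\vv_{a_2}$ because $\vu_{a_1}=\vu_{a_2}$, which is why the motion is non-rigid; the paper leaves this point implicit.
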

    \begin{proof}
        If $\vv_{a_1}=\vv_{a_2}$ then $\vv$ is also an infinitesimal motion of $(Q/a_1a_2,\vu')$ by setting $\vv_a=\vv_{a_1}$. Hence it is enough to show that $(Q,\vu)$ has more non-rigid infinitesimal motions than $(Q/a_1a_2,\vu')$, that is we need that $\dim(\mathbf{Inf}_{(Q,\vu)})>\dim(\mathbf{Inf}_{(Q/a_1a_2,\vu')})$. This is intuitively clear: splitting the vertex should introduce 3 degrees of freedom, doubling the two edges takes away only 2 of those. As the frameworks are stress-free we can calculate these dimensions. Since $(Q,\vu)$ and $(Q/a_1a_2,\vu')$ are stress-free, we have $\dim(\mathbf{Str}_{(Q,\vu)})=0$ and $\dim(\mathbf{Str}_{(Q/a_1a_2,\vu')})=0$. Plugging this into \eqref{eq:dimension} we obtain

    \begin{equation*}
        \dim(\mathbf{Inf}_{(Q,\vu)})=3n-(2n-2)+\dim(\mathbf{Str}_{(Q,\vu)})=n+2,
    \end{equation*}
    \begin{equation*}
        \dim(\mathbf{Inf}_{(Q/a_1a_2,\vu')})=3(n-1)-(2n-4)+\dim(\mathbf{Str}_{(Q/a_1a_2,\vu')})=n+1.
    \end{equation*}

     Hence, we have found an infinitesimal motion $\vv$ of $(Q,\vu)$ such that $\vv_{a_1}-\vv_{a_2}\neq \mathbf{0}$. We need to turn this into a proper flexing, but there is a last adjustment to make. It is important that we pull $a_1$ and $a_2$ apart in the right direction.
    
    Recall that $\vn=(\vu_{a_1}-\vu_{b_1})\times(\vu_{a_1}-\vu_{b_2})$, that is, it points to the halfspace that contains $N_1$. Since $\vv$ is an infinitesimal motion we have $(\vv_{b_1}-\vv_{a_1})^\top(\vu_{b_1}-\vu_{a_1})=0$ and $(\vv_{b_1}-\vv_{a_2})^\top(\vu_{b_1}-\vu_{a_2})=0$. Using that $\vu_{a_1}=\vu_{a_2}$ the second can be rewritten as $(\vv_{b_1}-\vv_{a_2})^\top(\vu_{b_1}-\vu_{a_1})=0$. Hence 

        $$(\vv_{b_1}-\vv_{a_1})^\top(\vu_{b_1}-\vu_{a_1})-(\vv_{b_1}-\vv_{a_2})^\top(\vu_{b_1}-\vu_{a_1})=0$$

        This simplifies to $(\vv_{a_2}-\vv_{a_1})^\top(\vu_{b_1}-\vu_{a_1})=0$. Similarly we get $(\vv_{a_2}-\vv_{a_1})^\top(\vu_{b_2}-\vu_{a_1})=0$, that is, $(\vv_{a_2}-\vv_{a_1})$ is orthogonal to both  $\vu_{b_1}-\vu_{a_1}$ and $\vu_{b_2}-\vu_{a_1}$. Therefore $(\vv_{a_2}-\vv_{a_1})$ is parallel to $\vn$. As $\vn$ and $(\vv_{a_1}-\vv_{a_2})$ are nonzero and parallel, we have $\vn^\top (\vv_{a_1}-\vv_{a_2})\ne 0$. If $\vn^\top (\vv_{a_1}-\vv_{a_2})<0$ we keep $\vv$, if $\vn^\top (\vv_{a_1}-\vv_{a_2})>0$ we will work with $-\vv$ instead, which is also an infinitesimal motion of $(Q,\vu)$. In other words, we can assume that  $\vn^\top (\vv_{a_1}-\vv_{a_2})<0$,  equivalently $\vv_{a_1}-\vv_{a_2}=-\lambda\vn$ for some $\lambda>0$.
      \end{proof}

    \begin{lemma}\label{lem:injective}
        There is a proper flexing $\vx$ of $(Q,\vu)$ with initial velocity satisfying Lemma \ref{lem:infinitesimalexists}. Furthermore, there is an $\varepsilon_1>0$ such that for any $0<t<\varepsilon_1$ the vector labeling $\vx(t)$ is injective and the points $\vx_{b_1}(t),\vx_{b_2}(t),(\vx_{a_1}(t)+\vx_{a_2}(t))/2$ are not collinear.
    \end{lemma}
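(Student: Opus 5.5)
The first claim comes straight from Lemma \ref{lem:flexing}. Lemma \ref{lem:splitstressfree} says $(Q,\vu)$ is stress-free. Lemma \ref{lem:infinitesimalexists} supplies a non-rigid infinitesimal motion $\vv$ with $\vv_{a_1}-\vv_{a_2}=-\lambda\vn$ and $\lambda>0$. Since $\vv_{a_1}\neq\vv_{a_2}$ while $\vu_{a_1}=\vu_{a_2}$, the framework is not infinitesimally rigid. Lemma \ref{lem:flexing} therefore gives a proper flexing $\vx:[0,1]\to(\R^3)^V$ with $\vx(0)=\vu$ and $\vx'(0)=\vv$. The flexing is smooth, so every quantity below depends smoothly on $t$, and we may take first-order Taylor expansions at $t=0$.

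For injectivity, split the pairs of vertices into two classes. First take pairs $\{i,j\}\neq\{a_1,a_2\}$. For these, $\vu_i\neq\vu_j$: by the induction hypothesis $\vu'$ is injective, and the only coincidence we introduced is $\vu_{a_1}=\vu_{a_2}$. By continuity there is an $\varepsilon>0$ such that $\vx_i(t)\neq\vx_j(t)$ for all such pairs when $t<\varepsilon$; there are finitely many pairs, so one $\varepsilon$ serves them all. Now take the pair $\{a_1,a_2\}$. Here
\[
\vx_{a_1}(t)-\vx_{a_2}(t)=t(\vv_{a_1}-\vv_{a_2})+O(t^2)=-t\lambda\vn+O(t^2).
\]
Because $\lambda\vn\neq\mathbf 0$, this difference is nonzero for all sufficiently small $t>0$. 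In fact $\vn^\top(\vx_{a_1}(t)-\vx_{a_2}(t))<0$ there, which records the sign we will need later for the rotation argument.

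For non-collinearity, set $\vy(t)=(\vx_{a_1}(t)+\vx_{a_2}(t))/2$. At $t=0$ we have $\vy(0)=\vu'_a$, and the points $\vu'_a,\vu_{b_1},\vu_{b_2}$ are non-collinear: they span the plane $H$, and indeed $\vn\neq\mathbf 0$. Non-collinearity is equivalent to
\[
(\vy(t)-\vx_{b_1}(t))\times(\vy(t)-\vx_{b_2}(t))\neq\mathbf 0 .
\]
This cross product is continuous in $t$ and nonzero at $t=0$, so it stays nonzero on some interval $[0,\varepsilon')$. Taking $\varepsilon_1$ to be the minimum of the three thresholds (the one for other pairs, the one for $\{a_1,a_2\}$, and $\varepsilon'$) gives the lemma.

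The only delicate point is the pair $\{a_1,a_2\}$. This is the one place where plain continuity is not enough, because the two points start out coinciding. What settles it is the first-order separation $-t\lambda\vn$, so we need the flexing to have initial velocity exactly $\vv$, which is precisely what the second part of Lemma \ref{lem:flexing} provides. Everything else is an open-condition argument at $t=0$.
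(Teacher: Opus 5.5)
Your proof is correct and follows the same route as the paper. You apply Lemma \ref{lem:flexing} to the stress-free framework using the motion from Lemma \ref{lem:infinitesimalexists}, use continuity for non-collinearity and for every pair other than $\{a_1,a_2\}$, and use the nonzero first-order term $\vv_{a_1}-\vv_{a_2}$ to separate $a_1$ and $a_2$; you simply write out details the paper leaves implicit.
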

    \begin{proof}
    Pick a non-rigid infinitesimal motion $\vv$ according to Lemma \ref{lem:infinitesimalexists}. To turn $\vv$ into a proper flexing we apply Lemma \ref{lem:flexing}. As we have seen $(Q,\vu)$ is stress-free, hence  there is a proper flexing $\vx(t)$ on  $[0,1]$ such that $\vx(0)=\vu$ and the first derivative of $\vx$ at $0$ is $\vv$. As the three points $\vx_{b_1}(t),\vx_{b_2}(t),(\vx_{a_1}(t)+\vx_{a_2}(t))/2$ are not collinear at $t=0$, they stay non-collinear for small $t$. Furthermore, as $\vv_{a_1}-\vv_{a_2}\neq \mathbf{0}$, we can restrict the flexing to some interval $[0,\varepsilon_1]$ such that the vector labeling $\vx(t)$ is injective and the points $\vx_{b_1}(t),\vx_{b_2}(t),(\vx_{a_1}(t)+\vx_{a_2}(t))/2$ are not collinear, for any $0<t<\varepsilon_1$. 
        
    \end{proof}
    From now on we fix a flexing $\vx$ satisfying the previous lemma. Together with the previous lemma the next two lemmas show that for small $\varepsilon$ the vector labeling $\vx(\varepsilon)$ satisfies properties $a),b)$ and $c)$. For $t<\varepsilon_1$ let $H(t)$ denote the plane spanned by $\vx_{b_1}(t),\vx_{b_2}(t),(\vx_{a_1}(t)+\vx_{a_2}(t))/2$. As the three defining points are not collinear for any $t<\varepsilon_1$, this is well defined. Clearly $H(0)=H$ and for $t>0$ the plane $H(t)$ is the perpendicular bisector of  $\vx_{a_1}(t)$ and $\vx_{a_2}(t)$. 

    \begin{lemma}\label{lem:diametergraphcorrect}
        There is an $\varepsilon_2>0$ such that for any $0<\varepsilon<\varepsilon_2$ the diameter of $\vx(\varepsilon)$ is exactly 1 and the underlying diameter graph is the graph of $Q$.
    \end{lemma}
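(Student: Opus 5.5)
We need to show that for small $\varepsilon>0$ all edges of $Q$ stay at distance exactly $1$ under $\vx(\varepsilon)$, while every non-edge pair is at distance strictly less than $1$. Edge lengths are constant along the flexing, so every edge of $Q$ remains at distance $1$. It therefore suffices to show that every non-adjacent pair has distance $<1$ for small $\varepsilon$. We split the non-edges into three classes according to their distance at time $0$.

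\emph{Class 1: pairs with $|\vu_i-\vu_j|<1$.} This includes the pair $a_1,a_2$, which has distance $0$. Since the flexing is continuous and there are finitely many pairs, there is an $\varepsilon'>0$ such that all these pairs stay at distance $<1$ for $\varepsilon<\varepsilon'$.

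\emph{Class 2: non-edges with $|\vu_i-\vu_j|=1$.} The set $X_{\vu'}$ has diameter graph exactly $Q/a_1a_2$, and this graph has no edge between $a$ and itself. Hence the only non-edges of $Q$ at distance $1$ at time $0$ are:
\begin{itemize}
\item pairs $(a_1,j)$ with $j\in N_2$;
\item pairs $(a_2,i)$ with $i\in N_1$.
\end{itemize}
For $j\in N_2$, the bar $a_2j$ is an edge, so $|\vx_{a_2}(t)-\vx_j(t)|=1$ for all $t$. Write $f(t)=|\vx_{a_1}(t)-\vx_j(t)|^2$. Since $\vu_{a_1}=\vu_{a_2}$, we get
\[
f'(0)=2(\vu_{a_1}-\vu_j)^\top(\vv_{a_1}-\vv_j)=2(\vu_{a_1}-\vu_j)^\top(\vv_{a_1}-\vv_{a_2}),
\]
where the second equality subtracts the edge condition $(\vu_{a_2}-\vu_j)^\top(\vv_{a_2}-\vv_j)=0$.

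By Lemma~\ref{lem:infinitesimalexists}, $\vv_{a_1}-\vv_{a_2}=-\lambda\vn$ with $\lambda>0$, so
\[
f'(0)=2\lambda\,\vn^\top(\vu_j-\vu_{a_1}).
\]
Because $\vu_j\in H_2$, this quantity is negative. Hence $f(t)<1$ for small $t>0$.

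Symmetrically, for $i\in N_1$ we get
\[
\frac{d}{dt}|\vx_{a_2}-\vx_i|^2\Big|_{0}=-2\lambda\,\vn^\top(\vu_i-\vu_{a_1})<0,
\]
since $\vu_i\in H_1$. This sign computation, using the chosen direction of $\vv$, is the crux of the argument: it is exactly why we fixed $\vn^\top(\vv_{a_1}-\vv_{a_2})<0$. Again, finitely many pairs give a uniform $\varepsilon''$.

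\emph{Class 3: pairs with $|\vu_i-\vu_j|>1$.} There are none, since $\diam X_\vu=1$.

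Setting $\varepsilon_2=\min(\varepsilon',\varepsilon'',\varepsilon_1)$, every non-edge has distance $<1$ for $0<\varepsilon<\varepsilon_2$, while every edge has distance exactly $1$. So the diameter is exactly $1$ and the diameter graph is $\gr(Q)$.

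The main obstacle is the first-order argument in Class 2. It requires the strict inequality $f'(0)<0$, which in turn requires $\vu_j$ to lie strictly off $H$. This holds because the cone rotation at $\vu'_a$ places $N_1$ strictly in $H_1$ and $N_2$ strictly in $H_2$: the rays to $b_1$ and $b_2$ are distinct extreme rays of a pointed cone, so no other extreme ray lies in their span plane $H$. If some $\vu_j$ did lie on $H$, a second-order analysis would be needed, but neighborhood-conedness rules this out.
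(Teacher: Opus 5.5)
Your proof is correct and follows the paper's argument. It reduces to the same non-edges $(a_1,j)$ with $j\in N_2$ and $(a_2,i)$ with $i\in N_1$, and settles them with the same two facts: $\vv_{a_1}-\vv_{a_2}=-\lambda\vn$, and the points of $N_1$ lie in $H_1$ while those of $N_2$ lie in $H_2$. The only difference is in execution: the paper places the points relative to the perpendicular bisector plane $H(\varepsilon)$ at positive time, whereas your first-derivative computation of the squared distance at $t=0$ is the infinitesimal form of that bisector argument and avoids tracking $H(\varepsilon)$ as it converges to $H$.
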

     \begin{proof}
         
      As $\vx(\varepsilon)$ is a flexing of $(Q,\vu)$, each edge of $Q$ is represented by a pair of points at distance 1. The diameter of $\vu'$, and therefore of $\vu$, is 1 by induction. Hence, if we choose $\varepsilon$ small, then for every pair of vertices $i,j\in V$ where $|\vx_i(0)-\vx_j(0)|<1$ we will still have $|\vx_i(\varepsilon)-\vx_j(\varepsilon)|<1$.

Therefore, the only remaining pairs of vertices where the diameter could increase are the $(a_1,j)$ pairs where $j\in N_2$ and the $(a_2,i)$ pairs where $i\in N_1$. These pairs have distance $1$ at $\vx(0)$ but they are not edges of $Q$, so their distance is not preserved by the framework. We must show that their distance decreases. At $t=0$, the points indexed by $N_1$ lie in $H_1$ and those indexed by $N_2$ lie in $H_2$, and these strict side relations remain true for any sufficiently small $\varepsilon>0$. Moreover,
      \[
          \frac{\vx_{a_1}(\varepsilon)-\vx_{a_2}(\varepsilon)}{\varepsilon}
          \longrightarrow \vv_{a_1}-\vv_{a_2}=-\lambda\vn,
          \qquad \lambda>0.
      \]
      Since $H(\varepsilon)$ is the perpendicular bisector plane, for sufficiently small $\varepsilon>0$ the point $\vx_{a_1}(\varepsilon)$ and the points of $\{\vx_{j}(\varepsilon)\mid j\in N_2\}$ lie strictly on one side of $H(\varepsilon)$ and the points $\vx_{a_2}(\varepsilon)$ and $\{\vx_{i}(\varepsilon)\mid i \in N_1\}$ lie strictly on the other side of $H(\varepsilon)$. Therefore, by the definition of $H(\varepsilon)$, the points of $N_2$ are strictly closer to $\vx_{a_1}(\varepsilon)$ than to $\vx_{a_2}(\varepsilon)$, see Figure \ref{fig:geosplit}. Their distance from $\vx_{a_2}(\varepsilon)$ is exactly 1, hence their distance from $\vx_{a_1}(\varepsilon)$ is strictly less than $1$. 

Similarly, the points of $N_1$ are closer to $\vx_{a_2}(\varepsilon)$ than to $\vx_{a_1}(\varepsilon)$. (If $N_1$ or $N_2$ is empty, there is nothing to check for that side.) Hence, the diameter of $\vx(\varepsilon)$ is exactly 1 and the diameter graph is $\gr (Q)$. As this holds for any small enough $\varepsilon>0$, we can fix a suitable $\varepsilon_2$. 
    \end{proof}

    \begin{lemma}\label{lem:checkingrotation}
        There is an $\varepsilon_3>0$ such that for any $0<\varepsilon<\varepsilon_3$ the vector labeling $\vx(\varepsilon)$ is neighborhood-coned and the cone rotations match the rotation system $\rho$. 
    \end{lemma}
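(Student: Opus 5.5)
The plan is a perturbation argument. All the required properties are strict, and hence open, conditions at every vertex except where $\vx(0)=\vu$ is degenerate. The degeneracy sits only at $b_1$ and $b_2$, because there the two rays towards $\vu_{a_1}$ and $\vu_{a_2}$ coincide at $t=0$. So I will sort the vertices into three types and handle each separately, taking $\varepsilon_3$ to be the minimum of finitely many thresholds.

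\emph{Vertices other than $a_1,a_2,b_1,b_2$.} Let $i$ be such a vertex. At $t=0$ the neighborhood configuration of $i$ in $(Q,\vu)$ is exactly that of $i$ in $(Q/a_1a_2,\vu')$, with $a$ renamed to $a_1$ or $a_2$. By the induction hypothesis this cone is pointed, every neighbor spans an extreme ray, and the cyclic order is $\overline\rho(i)$, which equals $\rho(i)$ after renaming. These properties can be witnessed by strict inequalities:
\begin{itemize}
\item pointedness by a plane through the apex with all neighbors strictly on one side;
\item extremality of each ray by a supporting plane that contains that ray and has the other rays strictly on one side;
\item the cyclic order by the signs of the determinants $\det(\vu_j-\vu_i,\vu_k-\vu_i,\vn_i)$, where $\vn_i$ is the normal of the pointedness plane.
\end{itemize}
Strict inequalities persist under the continuous flexing $\vx(t)$ for small $t$, so the cone stays pointed with the same extreme rays and the same rotation.

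\emph{The vertices $a_1$ and $a_2$.} At $t=0$ the neighbors of $a_1$ are $b_1,N_1,b_2$, which form a subset of the neighbors of $a$ in $(Q/a_1a_2,\vu')$. A subset of the extreme rays of a pointed cone spans a pointed cone whose extreme rays are exactly those rays, and the cyclic order is inherited. Hence the rotation at $a_1$ is $[b_1,i_1,\dots,i_k,b_2]=\rho(a_1)$. The degenerate cases $k=0$ (only two rays) and $k=1$ are covered by the same argument. All of this is again witnessed by strict inequalities, so it survives a small flex. The same holds for $a_2$ with $[b_2,j_1,\dots,j_t,b_1]$.

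\emph{The vertices $b_1$ and $b_2$; this is the main obstacle.} At $b_1$, the ray $r$ towards $\vu'_a$ is extreme in the old cone $\Kc'_{b_1}$, and for $t>0$ it splits into two rays towards $\vx_{a_1}(t)$ and $\vx_{a_2}(t)$. I would intersect the cone with an affine plane transverse to the pointedness direction, which turns the cross-section into a convex polygon. At $t=0$, $r$ corresponds to a vertex $p$ of this polygon with neighboring vertices $p^-$ and $p^+$ coming from the rotation. For $t>0$, $p$ is replaced by two points $p_1(t)$ and $p_2(t)$ at distance $\Theta(t)$. Their separation direction is, to first order, the projection of $\vv_{a_1}-\vv_{a_2}=-\lambda\vn$. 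The points $p_1(t),p_2(t)$ are both polygon vertices, in the order $a_2$ then $a_1$ required by $\rho$, provided this projected direction is not parallel to the polygon's edges at $p$ and points in the correct sense relative to them. The other vertices of the polygon are unaffected because their corners are strict.

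To verify this, I would use that $\vn$ is normal to $H$, the plane through $\vu_{b_1},\vu'_a,\vu_{b_2}$, so $\vn$ is transverse to $r$. The sign condition $\vn^\top(\vv_{a_1}-\vv_{a_2})<0$ says that $a_1$ moves into $H_2$, the side containing $N_2$, and $a_2$ moves into $H_1$. Next I would look at the faces of $Q$ at $b_1$. In $\rho(b_1)$ the pattern is $\dots,x,a_2,a_1,y,\dots$, where $x$ lies in a face with $a_2$ and $y$ lies in a face with $a_1$. The goal is to show that the polygon neighbors $p^\pm$ lie on the sides of $H$ that make the separation occur in the correct order. Concretely, I would prove that in the cross-section the line through $p$ in direction $\mathrm{proj}(\vn)$ is the trace of $H$. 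This line passes through $p$ and the trace of $b_2$'s direction, and it separates $p^-$ from $p^+$, which forces both new points to be extreme and to appear in the order $a_2,a_1$. If the first-order argument is delicate, for example when $H$ contains an edge of the polygon, I would add a second-order check. It would use that both $\vx_{a_1}(t)$ and $\vx_{a_2}(t)$ stay on the unit sphere about $\vx_{b_1}(t)$. The same argument, with the reversed orientation $a_1,a_2$ at $b_2$, handles $b_2$.
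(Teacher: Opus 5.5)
Your arguments for the vertices other than $b_1,b_2$ are sound; for $a_1,a_2$ you are more careful than the paper. The gap is in the step at $b_1,b_2$, which you rightly flag as the main obstacle.

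\emph{First}, the claim that the line through $p$ in direction $\mathrm{proj}(\vn)$ is the trace of $H$ is false. $H$ contains the apex $\vu_{b_1}$, so its trace is the central projection of $H$ itself. But $\vn$ is normal to $H$, so moving $\vu'_a$ along $\vn$ leaves $H$, and $\mathrm{proj}(\vn)$ is transverse to that trace.

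\emph{Second}, the criterion you then invoke is backwards. Suppose the line through $p$ in the separation direction strictly separated $p^-$ from $p^+$. Then one of $\pm(p_1(t)-p_2(t))$ would point, to first order, into the interior of the tangent cone of the polygon at $p$. So one of the two new points would lie inside the hull of the others and would not be extreme. What you need is that this line is a \emph{strict supporting line} of the polygon at $p$. That does not follow from $\vn^\top(\vv_{a_1}-\vv_{a_2})<0$ and the rotation of $Q$ alone. Your second-order fallback adds nothing, because every neighbor of $b_1$ lies on the unit sphere about $\vx_{b_1}(t)$. Likewise, the order cannot be read off from where $p^\pm$ sit relative to $H$, since nothing in the setup controls that.

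The missing idea is the metric constraint coming from $b_2$. By Lemma \ref{lem:diametergraphcorrect}, every neighbor of $b_1$ lies in $B(\vx_{b_2}(\varepsilon))$. The two new points $\vx_{a_1}(\varepsilon),\vx_{a_2}(\varepsilon)$ lie on $S(\vx_{b_1}(\varepsilon))\cap S(\vx_{b_2}(\varepsilon))$.

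Cut $\Kc_{b_1}(\varepsilon)$ with the perpendicular bisector plane $L(\varepsilon)$ of $\vx_{b_1}(\varepsilon)$ and $\vx_{b_2}(\varepsilon)$. Every extreme ray meets $L(\varepsilon)$ inside the disk $D(\varepsilon)=B(\vx_{b_1}(\varepsilon))\cap B(\vx_{b_2}(\varepsilon))\cap L(\varepsilon)$. Hence the cross-section polygon lies in $D(\varepsilon)$, while the two new points lie on its boundary circle $C(\varepsilon)$. So both are extreme for every small $\varepsilon$, with no first-order analysis needed.

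For the order, let $\vc=(\vu_{b_1}+\vu_{b_2})/2$. Then $\vn=(\vu_{b_2}-\vu_{b_1})\times(\vu'_a-\vc)$ is \emph{tangent} to $C(0)$ at $\vu'_a$. Viewed from $\vu_{b_1}$, the counterclockwise tangent there is $-\vn/|\vn|$. Comparing with $\vv_{a_1}-\vv_{a_2}=-\lambda\vn$ gives $a_2$ followed by $a_1$. The reversed viewing direction at $b_2$ gives $a_1$ followed by $a_2$.

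In your own framework, take $L(0)$ as the cross-section plane. Then $p=\vu'_a$ and $\mathrm{proj}(\vn)=\vn$, and the line through $p$ in that direction is the tangent to $C(0)$. It is perpendicular to the trace of $H$ and is a strict supporting line of the polygon. With this input your first-order polygon argument could be completed; without it, the key step does not go through.
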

    \begin{proof}

Throughout this proof take $0<\varepsilon<\min(\varepsilon_1,\varepsilon_2)$. 
If $\varepsilon>0$ is small enough, the rotation cannot change around any vertex except for $a_1,a_2,b_1$ or $b_2$ where we introduced/removed edges.
 Let $\Kc_v$ denote the neighborhood cone of vertex $v$. From the induction hypothesis, at $t=0$ the cones $\Kc_{a_1}$ and $\Kc_{a_2}$ have the correct rotation. Hence, if $\varepsilon$ is small, the rotation of the cones $\Kc_{a_1}$ and $\Kc_{a_2}$ will be correct. We only need to check the rotation of $\Kc_{b_1}$ and $\Kc_{b_2}$ and by symmetry it is enough to argue for $\Kc_{b_1}$. 

For sufficiently small $\varepsilon$, the old extreme rays of $\Kc_{b_1}$ other than the ray towards $\vu'_a$ remain extreme and keep their relative order. Hence we only need to show that $\vx_{a_1}(\varepsilon)-\vx_{b_1}(\varepsilon)$ and $\vx_{a_2}(\varepsilon)-\vx_{b_1}(\varepsilon)$ generate extreme rays of $\Kc_{b_1}(\varepsilon)$ and that, counterclockwise from the apex, the ray towards $\vx_{a_2}(\varepsilon)$ is followed directly by the ray towards $\vx_{a_1}(\varepsilon)$. These two rays converge to the old ray towards $\vu'_a$, so no other ray can occur between them locally.

 Let $L(\varepsilon)$ denote the perpendicular bisector plane of $\vx_{b_1}(\varepsilon)$ and $\vx_{b_2}(\varepsilon)$. Let $D(\varepsilon)$ be the disk $B(\vx_{b_1}(\varepsilon))\cap B(\vx_{b_2}(\varepsilon))\cap L(\varepsilon)$ and let $C(\varepsilon)$ be the bounding circle of $D(\varepsilon)$. See Figure \ref{fig:rotproof}. Note that extremal rays of $\Kc_{b_1}(\varepsilon)$ all intersect $L(\varepsilon)$. Hence, to find the extremal vectors of $\Kc_{b_1}(\varepsilon)$ it is enough to find the extremal points of the polygon $R(\varepsilon)=L(\varepsilon)\cap \Kc_{b_1}(\varepsilon)$. 

For every neighbor $j$ of $b_1$, the point $\vx_j(\varepsilon)$ lies on the unit sphere about $\vx_{b_1}(\varepsilon)$ and in the unit ball centered at $\vx_{b_2}(\varepsilon)$.

    Since $L(\varepsilon)$ is the perpendicular bisector plane of the two centers, it meets each extreme ray of $\Kc_{b_1}(\varepsilon)$ on the segment from $\vx_{b_1}(\varepsilon)$ to the corresponding neighbor. Each such intersection point lies in $B(\vx_{b_1}(\varepsilon))\cap L(\varepsilon)=D(\varepsilon)$. The section $R(\varepsilon)$ is the convex hull of these intersection points, so $R(\varepsilon)\subset D(\varepsilon)$. As $\vx_{a_1}(\varepsilon)$ and $\vx_{a_2}(\varepsilon)$ lie in $R(\varepsilon)\cap C(\varepsilon)$, they are extreme points of $D(\varepsilon)$ and hence also of $R(\varepsilon)$. Therefore they correspond to extreme rays of $\Kc_{b_1}(\varepsilon)$.

To see that they have the right order, let $\vc=(\vu_{b_1}+\vu_{b_2})/2$ be the center of $C(0)$. The definition of $\vn$ gives $\vn=(\vu_{b_2}-\vu_{b_1})\times(\vu'_a-\vc)$. Thus, viewed from $\vu_{b_1}$, the counterclockwise unit tangent to $C(0)$ at $\vu'_a$ is $-\vn/|\vn|$, see Figure \ref{fig:rotproof}. Since $\vv_{a_1}-\vv_{a_2}=-\lambda\vn$ with $\lambda>0$, and the tangent directions vary continuously, for sufficiently small $\varepsilon>0$ the point $\vx_{a_1}(\varepsilon)$ follows $\vx_{a_2}(\varepsilon)$ in counterclockwise order. At $b_2$ the viewing direction is reversed, so the counterclockwise tangent is $\vn/|\vn|$ and the order is $a_1$ followed by $a_2$, as required. This argument also applies when one or both of $N_1,N_2$ are empty.

    \begin{figure}[!ht]
        \centering
        \includegraphics[width=0.2\linewidth]{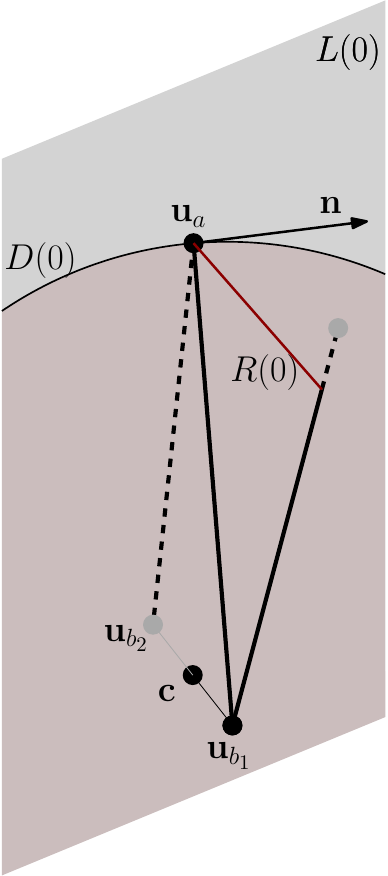}\hspace{20px}
        \includegraphics[width=0.2\linewidth]{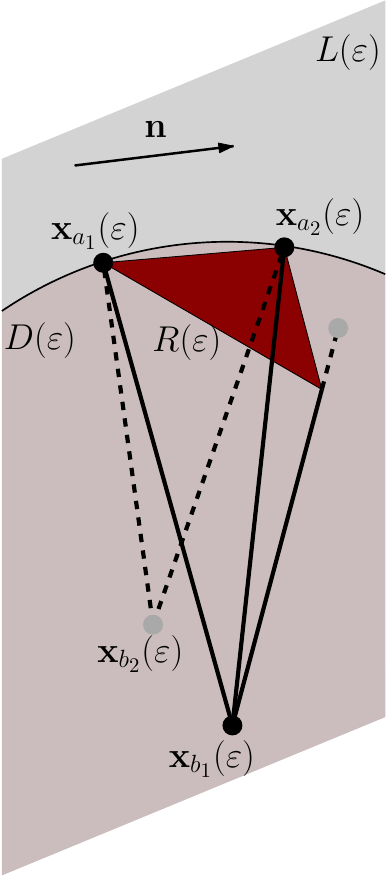}
        \caption{Changes in the rotation of the cone of $b_1$.}
        \label{fig:rotproof}
    \end{figure}
 \end{proof}

Now fix $0<\varepsilon< \min(\varepsilon_1,\varepsilon_2,\varepsilon_3)$ and consider the vector labeling $\vx(\varepsilon)$. Lemma \ref{lem:injective}, Lemma \ref{lem:diametergraphcorrect} and Lemma \ref{lem:checkingrotation} show that $\vx(\varepsilon)$ satisfies properties $a),b),c)$ respectively. This finishes the induction step; we have found a vector labeling for $Q$.

\end{proof}

Recall that Conjecture \ref{conj:montereuleaux} said that any simple 3-connected strongly involutive self-dual graph has a metric embedding. Theorem \ref{thm:main_nbqp_to_diam} settles this in a stronger form. Given a 2-connected (not necessarily simple) strongly involutive self-dual map $G$, the diameter realization of $\overline{R}(G)$ is a metric embedding of $G$. 

We are also ready to prove Theorem \ref{thm:main_skeleton}, which settles Conjecture \ref{conj:kupitz}. That is, we want to show that for every planar (not necessarily simple) 2-connected strongly involutive self-dual map $G$ there is an $X\subset \R^3$ such that $X$ is extremal for the Vázsonyi problem and the skeleton of $\Bc(X)$ is  isomorphic to $G$ as a map. 

Here the correspondence developed in Part I shows its usefulness. Instead of working with ball polytopes and SISD maps we can use Theorem \ref{thm:commute} to switch to diameter graphs and NBQP maps. For those Theorem \ref{thm:main_nbqp_to_diam} provides the geometric realization. Then we can use Theorem \ref{thm:commute} again to come back to SISD maps and ball polytopes again. 

\begin{proof}[Proof of Theorem \ref{thm:main_skeleton}]

Let $G$ be a 2-connected SISD map. By Theorem \ref{thm:commute} the quadrangulation $Q=\overline R(G)$ is an NBQP map. By Theorem \ref{thm:main_nbqp_to_diam} there is an extremal set $X\subset\R^3$ such that $\qt(X)=Q$. By Corollary \ref{cor:skeldiagonbody} we have $\skel\Bc(X)=\diag(\qt(X))=\diag(Q)=\diag(\overline R(G))$. Applying Theorem~\ref{thm:commute} again we obtain that $\diag(\overline R(G))=G$, hence the skeleton of $\Bc(X)$ is $G$, as desired. See Figure \ref{fig:mainsteps} for an illustration.
 
\end{proof}

We note that it is easy to miss some subtle parts of the previous argument. For example, it could be that there is a quadrangulation $Q'$ that has the same underlying graph as $Q$ but a different rotation system (see Figure 5 in Part I for an example). Simply taking a metric embedding of $G$ might result in an extremal set $X'$ such that $\qt(X')=Q'$ and not $Q$. This would give us a ball polytope with skeleton $\diag(Q')$ which is not $G$. This is why it is important not only that we create a point set whose diameter graph is the graph of $Q$, but also that we ensure that we get the correct rotations around the cones.

\section{Diameter graphs as generalized thrackles}\label{sec:thrackles}

In this Section we will consider all diameter graphs in $\R^3$, not just maximal ones.  A simple graph embedded in the projective plane is a \emph{parity embedding} when a simple cycle is essential if and only if it is of odd length. Recall from Part I the following folklore result.

\begin{lemma}[Lemma 6.4 in Part I]\label{lem:eqmaxcases}
    A simple graph $Q$ has a parity embedding in the projective plane if and only if it is a subgraph of an NBQP graph.

\end{lemma}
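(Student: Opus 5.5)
We prove the two directions separately. Throughout we use that the orientation double cover of $\Pb^2$ is the sphere, and that a closed curve in $\Pb^2$ is essential if and only if its lift to the sphere is not closed.

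\emph{Subgraph of an NBQP $\Rightarrow$ parity embedding.} Let $Q$ be an NBQP containing our graph as a subgraph, and restrict the embedding of $Q$ to that subgraph. Whether a simple cycle is essential depends only on the curve in $\Pb^2$, not on which graph it is drawn in. So it suffices to show that the embedding of $Q$ itself is a parity embedding. By Part~I, the lift $\widetilde Q$ to the sphere is a map of the canonical double cover $\gr(Q)\times K_2$. A closed walk of $Q$ lifts to a closed walk of $\widetilde Q$ if and only if it is contractible in $\Pb^2$. A closed walk lifts to a closed walk of $\gr(Q)\times K_2$ if and only if its length is even. These two lifts are the same object, so the two conditions coincide. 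Hence a simple cycle of $Q$ is essential exactly when it is odd.

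\emph{Parity embedding $\Rightarrow$ subgraph of an NBQP.} Let $G$ be simply embedded with the parity property. We enlarge the embedding step by step, using only new vertices and new edges, so that each step keeps the graph simple and keeps the parity property. We stop when we reach a simple quadrangulation containing an odd cycle, which is then an NBQP.

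It is convenient to reformulate the parity property homologically. It says that the two homomorphisms $H_1(G;\mathbb Z_2)\to\mathbb Z_2$, namely ``length mod $2$'' and ``class in $H_1(\Pb^2;\mathbb Z_2)$'', agree, because simple cycles generate $H_1(G;\mathbb Z_2)$. When we add a new path of new vertices between two existing vertices, we create exactly one new cycle class. We can always subdivide the path so that its length makes the two homomorphisms agree on that class, and then the parity property survives. The steps are as follows.

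\begin{enumerate}
\item If no cycle of the embedding is essential, add an odd essential cycle made of new vertices. Then join it to $G$ by a path of new vertices.
\item Repeatedly add connecting paths of new vertices, with lengths chosen as above, until the graph is connected and every face is an open disk. This is possible because a face that is not a disk contains either an essential curve or a curve separating two components, and we can draw a new path along such a curve.
\item Now every facial walk is contractible, so by the parity property each facial walk has even length $2k$, and $k\ge 2$ by simplicity. Faces with $2k=4$ are left alone. Inside every face with $2k\ge 6$, place a new $2k$-cycle parallel to the boundary and join corresponding vertices by a ladder of quadrilaterals. Then put a new hub vertex inside the inner cycle and join it to alternate vertices of that cycle.
\end{enumerate}

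Step 3 creates only $4$-faces. Because every new edge is incident to a new vertex, no loops or parallel edges arise, even when the boundary walk of the face repeats vertices. In step 3, $H_1$ of the new graph is generated by the old cycle space together with the boundaries of the new faces. On each new face both homomorphisms vanish, since the face has length $4$ and is contractible. So the parity property is preserved. The final map is a simple quadrangulation of $\Pb^2$ with an odd (essential) cycle, hence an NBQP, and it contains $G$.

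The main obstacle is step 2: making the embedding cellular while controlling parity, in particular handling faces that are not disks. The plan there is to handle each such face by drawing new paths inside it along an essential or separating curve, with lengths fixed by the parity rule. This is exactly where using only new vertices keeps simplicity automatic.
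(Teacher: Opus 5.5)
The present paper does not prove this lemma. It quotes it from Part~I as a folklore result, so there is no in-paper argument to set your route against.

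Judged on its own, your proof is correct. The covering argument handles the easy direction. For the converse, your steps produce a simple non-bipartite quadrangulation containing $G$:
\begin{enumerate}
\item Reformulate the parity property homologically: length mod $2$ and the class in $H_1(\Pb^2;\mathbb Z_2)$ are two linear functionals on the cycle space that must coincide.
\item Build a cellular embedding that contains an odd essential cycle.
\item Quadrangulate each even face with a ladder and a hub.
\end{enumerate}
Two of your checks are right: every added edge has a new endpoint, so simplicity is automatic even when facial walks repeat vertices. The new cycle space is spanned by the old one together with the new $4$-faces.

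A few places need one more sentence.

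\textbf{Step~1.} You assume that if $G$ has no essential cycle, then some essential simple closed curve avoids $G$. This is true but not automatic. The easiest fix is to connect the components first; an arc through a face meeting two components creates no cycle. A connected graph embedded cellularly in $\Pb^2$ has $H_1(G;\mathbb Z_2)\to H_1(\Pb^2;\mathbb Z_2)$ surjective. So if no cycle is essential, some face is not a disk. By your own step-2 dichotomy, that face contains an essential curve, because the graph is connected and so no curve can separate two components.

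\textbf{Step~2.} Once an essential cycle is present, no face can contain an essential curve, since two essential closed curves in $\Pb^2$ always meet. The only remaining task is connectivity. The new path should go across, not along, the separating curve. Once the graph is connected and contains an essential cycle, every face is automatically a disk; to see this, cut $\Pb^2$ along that cycle.

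\textbf{Easy direction.} The claim that ``these two lifts are the same object'' requires the topological cover $p:\widetilde Q\to Q$ to coincide with the projection $\gr(Q)\times K_2\to\gr(Q)$. This holds because $\widetilde Q$ is bipartite and $Q$ is connected and non-bipartite. An odd closed walk at $v$ lifts to an odd path joining the two preimages of $v$, so these get different colors. Hence $\tilde v\mapsto(p(\tilde v),\mathrm{color}(\tilde v))$ is an isomorphism of covers.
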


Parity embeddings also appear in the context of thrackles. 

\begin{theorem}[Cairns and Nikolayevsky  \cite{CN00}]\label{thm:thrpairtyemb}
    A graph is generalized thrackleable if and only if it has a parity embedding on the projective plane.
\end{theorem}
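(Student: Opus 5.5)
The plan is to prove both directions by the same mechanism: a \emph{crosscap that every edge passes through exactly once} acts, once it is replaced by a planar gadget, as a device in which every pair of edges crosses exactly once. Then correct adjacent pairs by vertex flips. Throughout, I model $\Pb^2$ as the plane with one crosscap, i.e.\ a disk whose boundary has antipodal points identified. A closed curve is essential exactly when it passes through the crosscap an odd number of times.

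\emph{Parity embedding $\Rightarrow$ generalized thrackle.} First, given a parity embedding of $G$, I normalize it so that every edge passes through the crosscap exactly once. Record, for each edge, the parity of its number of passages through the crosscap; this is a $\mathbb{Z}_2$-cochain $c$ on $E$. The parity condition says that $c$ and the all-ones cochain have the same value on every cycle. Hence they differ by a coboundary $\delta f$ for some $f:V\to\mathbb{Z}_2$. Pushing each vertex $v$ with $f(v)=1$ across the crosscap realizes this coboundary isotopically, so afterwards every edge passes through the crosscap an odd number of times. Rerouting locally then makes each edge pass through it exactly once.

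Next, replace the crosscap by the standard planar gadget. The strands entering the crosscap disk leave it at the antipodal positions, so their cyclic order is reversed, and one can draw them so that every two strands cross exactly once. This gives a drawing of $G$ in the plane in which any two edges cross exactly once inside the gadget and nowhere else. Independent edges therefore meet once, which is odd, as required. Adjacent edges meet twice (the common endpoint plus one crossing), which is even, so they must be repaired. At each vertex $v$, I flip a small disk around $v$, reversing its rotation. This introduces exactly one extra crossing between every pair of edges at $v$ and does not affect any other pair. After flipping every vertex, each adjacent pair meets an odd number of times, while independent pairs are unchanged. The result is a generalized thrackle.

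\emph{Generalized thrackle $\Rightarrow$ parity embedding.} Reverse the construction. Start from a generalized thrackle drawing and flip every vertex, so that adjacent pairs cross an odd number of times, as independent pairs already do. Then insert a crosscap far from the drawing and reroute every edge once through it. Every pair of edges now gains exactly one crossing, so in this drawing on $\Pb^2$ every pair of edges, adjacent or not, crosses an even number of times. At this point I invoke the strong Hanani--Tutte theorem for the projective plane (Pelsmajer--Schaefer--Stasi). It yields an embedding of $G$ in $\Pb^2$ in which every cycle has the same $\mathbb{Z}_2$-homology class as in the drawing. In the drawing, a cycle of length $\ell$ passes through the crosscap exactly $\ell$ times, so it is essential if and only if $\ell$ is odd. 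Hence the embedding is a parity embedding.

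The main obstacle is this last step. The homology-preserving strong Hanani--Tutte theorem on $\Pb^2$ is a deep result, and in particular we must confirm that it preserves the classes of all cycles, not merely yields some embedding. If I wanted to avoid it, I would first try a direct argument: use the drawing to define the cochain ``number of crossings with a fixed essential curve'', show that every even cycle is null in $\mathbb{Z}_2$-homology, and build a quadrangulation containing $G$. The first direction, by contrast, is routine once the cohomology normalization is in place.
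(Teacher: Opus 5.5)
The paper gives no proof of this statement; it is imported from Cairns and Nikolayevsky, so your argument has to stand on its own. Its architecture is the right one. Vertex flips toggle the crossing parity of adjacent pairs only, and routing every edge once through a crosscap toggles the parity of all pairs.

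The first direction is essentially fine, with one unjustified step. You claim that ``rerouting locally'' makes each edge pass through the crosscap \emph{exactly} once. An edge meeting the crosscap curve three times cannot in general be shortened by an isotopy, because the region between two consecutive passages may contain vertices. Rerouting it across the rest of the graph instead changes crossing parities. You do not need this step. Suppose $e$ passes $k_e$ times with $k_e$ odd. Then in the gadget two distinct edges cross $k_ek_f$ times, which is odd. The $\binom{k_e}{2}$ forced self-crossings of $e$ can be removed by resolving each one in the way that keeps the edge connected (reversing the loop). This leaves all crossings with other edges unchanged.

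The genuine gap is the last step of the converse. The strong Hanani--Tutte theorem on $\Pb^2$ of Pelsmajer, Schaefer and Stasi only asserts that an independently even drawing can be replaced by \emph{some} embedding. Its proof goes through the forbidden minors of the projective plane and gives no control over which cycles are essential. So the homology-preservation property you attribute to it is not available. As written, you only obtain that $G$ is projective-planar, not that it has a parity embedding; you flag this yourself but leave it open.

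The fix is that you never needed the strong version. After the vertex flips and the crosscap rerouting, \emph{every} pair of edges, adjacent ones included, crosses evenly. So the weak Hanani--Tutte theorem on surfaces applies. In the form of Pelsmajer, Schaefer and \v{S}tefankovi\v{c} (\emph{Removing even crossings on surfaces}), it yields an embedding with the same embedding scheme, i.e.\ the same rotations and edge signatures. In your drawing every edge has signature $-1$. A cycle in $\Pb^2$ is one-sided, equivalently essential, exactly when the product of the signatures along it is $-1$, and this is invariant under the vertex switches defining equivalence of schemes. Hence a cycle of length $\ell$ in the resulting embedding is essential if and only if $\ell$ is odd. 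With this replacement the argument is complete.
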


At this point we can claim the following characterization of maximal diameter graphs in $\R^3$.

\begin{theorem}
A graph on $n\ge4$ vertices is a diameter graph in $\R^3$ with $2n-2$ edges if and only if it is the graph of a simple non-bipartite quadrangulation of the projective plane.    
\end{theorem}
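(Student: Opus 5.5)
Both directions follow from results already established; no new geometry is needed.

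For the forward direction, let $X\subset\R^3$ with $|X|=n\ge 4$ have a diameter graph with $2n-2$ edges. After rescaling we may assume $\diam(X)=1$, so $X$ is extremal for the Vázsonyi problem. Lemma~\ref{lem:diameterispolyhedral} then gives the simple non-bipartite quadrangulation $\qt(X)=\overline R(\skel\Bc(X))$ together with a bijective labeling $\vu:V(\qt(X))\to X$. By part~a) of that lemma, $ij$ is an edge of $\qt(X)$ exactly when $\vu_i\vu_j$ is a diameter pair. Hence $\vu$ is a graph isomorphism between $\gr(\qt(X))$ and $\Gamma(X)$, so the diameter graph is the graph of an NBQP.

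For the converse, let $H$ be the graph of a simple non-bipartite quadrangulation $Q$ of the projective plane on $n\ge 4$ vertices. Theorem~\ref{thm:main_nbqp_to_diam} gives an extremal set $X\subset\R^3$ with $\qt(X)=Q$. Extremality means $|X|=n$ and $\Gamma(X)$ has $2n-2$ edges. By Lemma~\ref{lem:diameterispolyhedral}~a), $\Gamma(X)\cong\gr(\qt(X))=\gr(Q)=H$, so $H$ is a diameter graph in $\R^3$ with $2n-2$ edges.

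One could instead count edges directly: Euler's formula on $\Pb^2$ gives $n-e+f=1$, and $4f=2e$ for a quadrangulation, so $e=2n-2$. This is not needed here, since the edge count already follows from extremality of the realized set.

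There is no real obstacle, because the substantive step is the existence result, Theorem~\ref{thm:main_nbqp_to_diam}. The only point requiring care is to confirm that $\qt(X)$ is defined only for $n\ge 4$ with $\diam(X)=1$, and that both requirements are covered by the hypothesis and the normalization above.
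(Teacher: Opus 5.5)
Your proof is correct and matches the paper's argument. The forward direction identifies $\Gamma(X)$ with $\gr(\qt(X))=\gr(\overline R(\skel\Bc(X)))$, an NBQP graph (the paper cites Theorem~\ref{thm:2connect} and Corollary~\ref{cor:canonicalissisd} directly, which Lemma~\ref{lem:diameterispolyhedral} packages), and the converse is exactly Theorem~\ref{thm:main_nbqp_to_diam}.
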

\begin{proof}
    Suppose $X$ is extremal and $|X|\ge 4$. By Theorem \ref{thm:2connect} and Corollary \ref{cor:canonicalissisd} we know that $\skel(\Bc(X))$ is 2-connected and strongly involutive self-dual. Let $G=\skel(\Bc(X))$. Hence, the diameter graph $\Gamma(X)=\gr(\qt(X))=\gr(\overline{R}(G))$ is an NBQP graph. The other direction is given by Theorem \ref{thm:main_nbqp_to_diam}.
 
\end{proof}
 
We show that this can be extended to the non-edge-maximal case. 

\begin{theorem}
    A graph on at least 2 vertices is a diameter graph in $\R^3$ if and only if it is a subgraph of an NBQP graph and it has at least one edge. 
\end{theorem}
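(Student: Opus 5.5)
The forward direction goes through generalized thrackles, and the converse realizes a suitable NBQP via Theorem \ref{thm:main_nbqp_to_diam} and then deletes and adds points.

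\emph{Only if.} Let $X\subset\R^3$ be finite with $|X|\ge 2$. Then $\diam(X)$ is attained by some pair, so $\Gamma(X)$ has at least one edge. By Theorem \ref{thm:diamisgent} (Perlstein--Pinchasi), $\Gamma(X)$ is generalized thrackleable. By Theorem \ref{thm:thrpairtyemb} (Cairns--Nikolayevsky), it therefore has a parity embedding in the projective plane. By Lemma \ref{lem:eqmaxcases}, it is then a subgraph of an NBQP graph.

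\emph{If.} Let $H$ be a subgraph of $\gr(Q)$ for an NBQP $Q$, with at least one edge. I split the work into three reductions.

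First, isolated vertices are harmless. If $X$ realizes $H$ minus its isolated vertices, then $cr(X)<1$ (Theorem 5.2 of \cite{kupitz2010ball}). So the circumcenter has distance $<1$ from every point of $X$, and we may add arbitrarily many distinct new points in a tiny ball around it without creating new diameter pairs.

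Second, deleting vertices is harmless. If $X$ realizes a graph $H'$ and $H$ is an induced subgraph of $H'$ containing an edge of $H'$, then the corresponding subset $Y\subseteq X$ still has diameter $1$. Moreover $\Gamma(Y)$ is exactly the induced subgraph.

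So it suffices to find an NBQP $Q'$ and a vertex subset $W$ such that $Q'[W]$ equals $H$ minus its isolated vertices. Then Theorem \ref{thm:main_nbqp_to_diam} realizes $Q'$ as an extremal set, and we restrict to $W$.

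Third, we build $Q'$ from $Q$ by removing the unwanted edges one at a time using vertex splitting. Let $av$ be an edge of $Q$ to be removed, and let $\deg a\ge 3$. Take $b_1,b_2$ to be the two neighbours of $v$ in the rotation of $a$. Split $a$ into a new quadrangle $a_1b_1a_2b_2$ so that $a_2$ receives only $v$ (besides $b_1,b_2$), while $a_1$ receives all the other neighbours of $a$. The result is a simple quadrangulation of $\Pb^2$, since $a_1,a_2$ have exactly the two common neighbours $b_1,b_2$. Contracting $a_1a_2$ gives back $Q$, so by Theorem \ref{thm:youngs} and the remark after it (chromatic number is preserved by contraction/splitting) it is again non-bipartite, hence an NBQP. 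Identifying $a_1$ with $a$, the induced subgraph on $V(Q)\cup\{a_1\}\setminus\{a\}$ is exactly $Q-av$, since $a_2$ is left out. Iterating over all edges of $Q$ not in $H$, and always keeping the ``old'' copy of each split vertex in $W$, yields $Q'$ and $W$ as required.

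The main obstacle is the degenerate case where the endpoint we split has degree $2$. Here I would split at the other endpoint $v$ if $\deg v\ge 3$. If both $a$ and $v$ have degree $2$, I would first perform an auxiliary split at a neighbour $w$ of $a$ (using the two edges $wa$ and a further edge at $w$) to raise $\deg a$ to $3$ while leaving the subgraph induced on the old vertices unchanged. I would then apply the construction above. One also has to check carefully that the successive splits do not interfere with the earlier identifications; this should follow because each split only removes one edge from the induced subgraph on the retained vertices.
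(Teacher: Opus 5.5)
Your ``only if'' direction is exactly the paper's chain (Perlstein--Pinchasi, Cairns--Nikolayevsky, Lemma~\ref{lem:eqmaxcases}). For the ``if'' direction you take a genuinely different route, and it is correct.

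\textbf{The paper's route.} It realizes $Q$ itself via Theorem~\ref{thm:main_nbqp_to_diam}. It then uses that this realization is stress-free (Lemma~\ref{lem:stressfree}). So the rigidity matrix has full row rank, and by the implicit function theorem the edge lengths can be perturbed independently. It keeps length $1$ on $E(H)$, shortens the remaining edges of $Q$ slightly, and deletes the vertices outside $V(H)$. Isolated vertices of $H$ need no separate treatment.

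\textbf{Your route.} You instead prove a purely combinatorial fact: every subgraph of an NBQP graph without isolated vertices is an \emph{induced} subgraph of a larger NBQP. You obtain it by splitting off one unwanted edge at a time. After that you only need Theorem~\ref{thm:main_nbqp_to_diam} as a black box, restriction to a subset, and the circumcenter trick.

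\textbf{Checking your bookkeeping.} It is right. After the split, $N(a_1)=N(a)\setminus\{v\}$ and $a_2$ is discarded, so the induced subgraph on the retained vertices loses exactly $av$. Successive splits therefore cannot interfere. Non-bipartiteness also follows directly: a $2$-colouring of the split gives $a_1,a_2$ the same colour (they share the neighbour $b_1$), so it descends to a $2$-colouring of $Q$.

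\textbf{One detail to make explicit.} In the auxiliary split at $w$ in the degree-$2$ case, the second edge $wc$ must be consecutive to $wa$ in the rotation of $w$. Equivalently, you insert a degree-$2$ vertex into a face $wcya$ and join it to $a$ and $c$. Otherwise neither copy of $w$ inherits all of $N(w)$, and the induced subgraph on the old vertices changes.

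\textbf{What each approach buys.} The paper's argument is shorter. It produces $H$ as a small perturbation of the realization of $Q$, with no auxiliary vertices and no degree case analysis. Yours avoids the implicit-function-theorem step and isolates a combinatorial statement about NBQPs of independent interest. The cost is extra vertices and the degree-$2$ cases.
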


\begin{proof}
    Let $Q$ be an NBQP map, and let $Q'$ be a subgraph of it with at least one edge. By Theorem \ref{thm:main_nbqp_to_diam} there is an extremal set $X$ so that $\qt(X)=Q$. By Lemma \ref{lem:stressfree} this diameter realization of $Q$ is stress-free. Since the framework is stress-free, every sufficiently small perturbation of its edge lengths admits a realization arbitrarily close to the original framework.  This follows from applying the Implicit Function Theorem to the squared-edge-length map, see 10.5 and 15.2.2 in \cite{lovasz2019graphs}. (Essentially, the stress-freeness condition expresses that the hypersurfaces corresponding to the edge length constraints meet transversally at the solution $X$.) 

    Therefore we can prescribe edge lengths 1 on $E(Q')$ and edge lengths strictly smaller than 1 on $E(Q)\setminus E(Q')$. This has a realization $X'\subset \R^3$ close to $X$.  If the perturbation is close enough to $X$, then the pairs of points that were strictly closer to each other than 1 are still strictly closer than 1. As $Q'$ has at least one edge, the diameter cannot decrease. Furthermore we can simply delete all vertices in $V(Q)\setminus V(Q')$. Hence, the diameter graph of $X'$ is exactly $Q'$. 

    In the other direction, we could argue the following way. Suppose $X$ is a set of points in $\R^3$. We could extend the ideas in Lemma \ref{lem:diameterispolyhedral} to show that the diameter graph $\Gamma(X)$ has a drawing in the projective plane which extends to an NBQP. Again the vertex rotations would correspond to the neighborhood rotations. This would require us to carefully analyze the skeleton of $\Bc(X)$. We choose an easier path: we can simply chain together the previous theorems. Any diameter graph in $\R^3$ is a generalized thrackleable graph (Theorem \ref{thm:diamisgent}). Any generalized thrackleable graph has a parity embedding in the projective plane (Theorem \ref{thm:thrpairtyemb}). Any parity embedding in the projective plane is a subgraph of an NBQP graph (Lemma \ref{lem:eqmaxcases}).  Hence, any diameter graph is a subgraph of an NBQP graph, and it must obviously have at least one edge since we have at least 2 vertices. 
       
\end{proof}

\section{Further connections and applications}\label{sec:furtherconnections}

Our paper relies heavily on the seminal paper of Kupitz, Martini and Perles \cite{kupitz2010ball}. They asked to characterize  simple 3-connected graphs that are skeletons of ball polytopes of extremal sets (Question 2 in \cite{kupitz2010ball}). Theorem \ref{thm:main_skeleton} answers this question, a simple 3-connected graph is equivalent to $\skel(\Bc(X))$ for some extremal $X$ if and only if it is strongly involutive self-dual.  

They also asked to characterize the extremal sets where the skeleton is simple and 3-connected. We say that $X\subset \R^3$ is a \emph{critical configuration} for the Vázsonyi problem if it is an extremal configuration and any point of $X$ is adjacent to at least 3 diameters. We also say that $X$ is \emph{strongly critical} if $X$ does not have a proper subset which is an extremal configuration. Example \ref{fig:example-G} is critical but not strongly critical.

\begin{question}[Question 3. in \cite{kupitz2010ball}]
     For which extremal sets $X\subset \R^3$ is $\Bc(X)$ 3-polytopal, i.e., has a face structure combinatorially equivalent to that of a 3-polytope?
\end{question}

They conjectured that this happens if and only if $X$ is strongly critical. This conjecture was recently confirmed by López-Campos,  Oliveros, and  Ramírez Alfonsín in \cite{lopez2023borsuk}. 
\begin{theorem}[Theorem 7 in \cite{lopez2023borsuk}]\label{thm:stronglycritical}
    If $X$ is extremal with $|X|\ge 4$, then $\skel\Bc(X)$ is simple and 3-connected if and only if $X$ is strongly critical. 
\end{theorem}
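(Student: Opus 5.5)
The plan is to move the whole statement to the quadrangulation side using Corollary \ref{cor:skeldiagonbody} and the dictionary of Table \ref{tab:nbqp-sisd-dictionary}, and then reduce both conditions to the absence of separating $4$-cycles in $Q:=\qt(X)$.

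\textbf{Step 1: the skeleton side.} Since $\skel\Bc(X)=\diag(Q)$, Part I (as recorded in Table \ref{tab:nbqp-sisd-dictionary}) says that $\diag(Q)$ is $3$-connected if and only if $Q$ has no separating $4$-cycle. For simplicity I would check that parallel edges in $\diag(Q)$ also come from separating $4$-cycles. Two parallel diagonals $ac$ lie in two faces $abcd$ and $ab'cd'$ of the cover. These give a $4$-cycle through $a,c$ that cuts off a disk, and that disk contains a vertex unless the map is tiny. So, for $|X|\ge 4$ and excluding degenerate small cases, the condition ``$\skel\Bc(X)$ is simple and $3$-connected'' is equivalent to ``$Q$ has no separating $4$-cycle''.

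\textbf{Step 2: proper extremal subsets are sub-quadrangulations.} Let $Y\subsetneq X$ be extremal with $|Y|=m\ge 4$. Then $\Gamma(Y)$ is the subgraph $H$ of $\gr(Q)$ induced on $Y$, because $\diam(Y)=1$ and the diameter pairs of $Y$ are exactly the unit-distance pairs inside $Y$. It has $2m-2$ edges. Equip $H$ with the embedding inherited from $Q$, and use that $H$ contains an odd cycle, which is essential by the parity-embedding property. Euler's formula on $\Pb^2$ together with $4F\le 2E$ (faces have even length at least $4$, since $H$ is a subgraph of an NBQP graph) then forces every face of $H$ to be a $4$-cycle. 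The point to verify is that the faces are $2$-cells; I would argue this using that an odd essential cycle exists. Since $m<n$, some face of $H$ is not a face of $Q$. That face is a $4$-cycle bounding a disk in $\Pb^2$ that contains vertices of $X\setminus Y$. Chords are impossible because a face of a quadrangulation is not further subdivided by edges alone. Hence this face is a separating $4$-cycle of $Q$.

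\textbf{Step 3: conversely, separating $4$-cycles give proper extremal subsets.} Let $C$ be a separating $4$-cycle of $Q$. Delete the vertices strictly inside the disk bounded by $C$; call the remaining set $Y$. The result is again a simple quadrangulation of $\Pb^2$, with $C$ becoming a face. It remains non-bipartite, since an essential odd cycle can be rerouted off the disk. Its graph is the induced graph on $Y$, so it has $2|Y|-2$ edges, and $|Y|\ge 4$. Therefore $Y$ is extremal and proper, so $X$ is not strongly critical.

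\textbf{Step 4: the criticality condition.} Criticality (every point lies on at least $3$ diameters) is implied in both directions. A vertex $v$ of degree $2$ in $Q$, with neighbours $b_1,b_2$, lies in two faces $vb_1xb_2$ and $vb_1yb_2$. Then $b_1xb_2y$ is a separating $4$-cycle (for $n>4$), which is excluded by the previous steps. Conversely, a degree-$2$ vertex would give a degree-$2$ vertex in the skeleton, or parallel edges, through the self-duality.

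Combining Steps 1–4: $\skel\Bc(X)$ is simple and $3$-connected $\iff$ $Q$ has no separating $4$-cycle $\iff$ $X$ has no proper extremal subset and is critical. The main obstacle is Step 2: making rigorous that an edge-maximal induced subgraph inherits a cellular quadrangular embedding, rather than having non-disk faces. I would handle this via the essential odd cycle and the count from Lemma 4.4 of Part I.
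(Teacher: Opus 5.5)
Your argument is correct, but it takes a longer route than the paper. The paper's proof is a direct translation in three moves:
\begin{enumerate}
\item By Lemma~\ref{lem:selfdualsimple}, a 3-connected self-dual map is automatically simple, so only 3-connectivity matters.
\item Lemma~\ref{lem:3conequiv} (equivalence of items 1 and 3) turns 3-connectivity into the counting condition that no proper $S\subset V(\qt(X))$ with $|S|\ge 4$ spans more than $2|S|-3$ edges.
\item Every subgraph of an NBQP graph has at most $2|S|-2$ edges. So a proper $S$ spanning more than $2|S|-3$ diameters is exactly a proper extremal subset, and the counting condition is precisely strong criticality.
\end{enumerate}

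You instead go through item 2 of Lemma~\ref{lem:3conequiv}, the absence of separating $4$-cycles. You then prove by hand that proper extremal subsets are exactly the vertex sets left after deleting the interior of a separating $4$-cycle, using Euler's formula on $\Pb^2$, the parity embedding, and the fact that $\Gamma(Y)$ is an induced subgraph. In effect you reprove the equivalence of items 2 and 3 of Lemma~\ref{lem:3conequiv} in the geometric setting, and you replace Lemma~\ref{lem:selfdualsimple} with a direct argument about parallel diagonals.

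Your route gives an explicit structural picture: every proper extremal subset is a sub-quadrangulation cut out along a separating $4$-cycle, and conversely. The paper's route gives a two-line proof that needs nothing beyond the Part I lemmas.

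Two small points to tighten:
\begin{itemize}
\item In Step 2, cellularity of the inherited embedding of $H$ also needs $H$ to be connected. This follows from the $2|V'|-2$ bound applied to each component, since a disconnected $H$ would have at most $2m-4$ edges.
\item In Step 1, the exceptional case is not really ``tiny maps''. The faces $abcd$ and $ab'cd'$ give three internally disjoint $a$--$c$ paths, and the second disk region of this theta graph is bounded by a $4$-cycle through $a$ and $c$. If that disk is itself a face of $Q$, then the middle vertex of the path it shares with $abcd$ has degree $2$. In that case your Step 4 cycle is separating. Alternatively, you can simply cite Lemma~\ref{lem:selfdualsimple}, as the paper does.
\end{itemize}
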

We obtain a new proof of this result. Let us recall the following two results from Part I. 

\begin{lemma}\label{lem:3conequiv}
    Let $G$ be a 2-connected SISD map. Then the following are equivalent:
    \begin{enumerate}
        \item $G$ is 3-connected.
        \item $\overline R(G)$ has no separating 4-cycles.
        \item For every proper subset $S\subset V(\overline R(G))$ that contains at least 4 vertices, we have $|E(\overline R(G)[S])|\le 2|S|-3$.
    \end{enumerate}
    
\end{lemma}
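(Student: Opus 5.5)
We prove the chain $(1)\Rightarrow(2)\Rightarrow(3)\Rightarrow(1)$. The dictionary of Theorem~\ref{thm:commute} is the main tool, with $Q:=\overline R(G)$ and $G=\diag(Q)$. Throughout we use the counting fact (Lemma 4.4 in Part I) that any subgraph of an NBQP graph on $k$ vertices has at most $2k-2$ edges, and that $|E(Q)|=2|V(Q)|-2$ since $Q$ is a quadrangulation of $\Pb^2$.

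\textbf{$(1)\Leftrightarrow(2)$.} This is the dictionary entry that a separating $4$-cycle in $Q$ corresponds to a $2$-cut in $\diag(Q)$. We make it concrete.

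Suppose $C=v_1v_2v_3v_4$ is a separating $4$-cycle in $Q$. Since $Q$ is non-bipartite, $C$ is contractible, being even. It therefore bounds a disk $D$ containing at least one vertex in its interior. Lift $D$ to the sphere cover $\widetilde Q=R(G)$. One copy of $C$ has its two vertices of one color class in $V(G)$ and the other two in $F(G)$. Those two $G$-vertices then separate the $G$-vertices inside the lifted disk from those outside, giving a $2$-cut of $G$. Both sides are nonempty, because the interior of $D$ contains a vertex (or, after lifting, a $G$-vertex, using that the quadrangulation is simple) and the outside contains the other lift.

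Conversely, a $2$-cut $\{x,y\}$ of the plane map $G$ gives two faces $f,g$ each incident with both $x$ and $y$. The cycle $xfyg$ in $R(G)$ is then separating. Its image in $Q$ is a $4$-cycle that remains separating, since the two lifts of the disk are disjoint under the free involution.

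\textbf{$(2)\Rightarrow(3)$.} Let $S$ be a proper subset with $|S|\ge 4$, and suppose $|E(Q[S])|\ge 2|S|-2$. By the counting bound, equality holds. We would then show that $Q[S]$ is itself a quadrangulation of $\Pb^2$ in the induced embedding: an Euler-characteristic count on the induced embedding, with all faces of length at least $4$, forces every face to be a $4$-face. The only exception is a face that is not a $2$-cell, but such a face would have to contain an essential curve, which is impossible because $Q[S]$ contains an odd, hence essential, cycle once it has $2|S|-2$ edges.

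Since $S\ne V$, some vertex of $V\setminus S$ lies inside a face of $Q[S]$. That face is a $4$-cycle of $Q$ bounding a disk that contains a vertex on its inside. Its outside also contains vertices, because $|S|\ge 4$. This is a separating $4$-cycle, a contradiction.

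\textbf{$(3)\Rightarrow(2)$.} Given a separating $4$-cycle $C$ bounding the disk $D$, take $S=V\setminus \mathrm{int}(D)$. The part of $Q$ inside $D$ is a quadrangulated disk with boundary length $4$. If it has $m$ interior vertices, then Euler's formula shows that it contains exactly $2m$ interior edges. Removing it therefore leaves $|E(Q[S])|=2|V|-2-2m=2|S|-2$. Moreover $|S|\ge 4$ and $S$ is proper, which contradicts (3).

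The main obstacle is the face-structure argument in $(2)\Rightarrow(3)$: showing that an edge-extremal induced subgraph is a genuine $2$-cell quadrangulation whose faces are $4$-cycles of $Q$. The Euler count must be done carefully there, allowing for non-cellular faces.
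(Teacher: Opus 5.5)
The paper does not reprove this lemma; it recalls it from Part~I, and Table~\ref{tab:nbqp-sisd-dictionary} records the separating $4$-cycle/$2$-cut correspondence. Your outline is the natural route: lift $4$-cycles of $Q$ to cycles $xfyg$ of $R(G)$, and use an Euler count for (2)$\Leftrightarrow$(3). The direction (3)$\Rightarrow$(2) is fine as written, but two steps are wrong as stated.

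\textbf{The lift in ``separating $4$-cycle $\Rightarrow$ $2$-cut''.} You may not take an arbitrary lift. A lifted disk with boundary $xfyg$, $x,y\in V(G)$, can contain only face-vertices of $R(G)$ in its interior (digon faces of $G$ between $x$ and $y$), and simplicity of $Q$ does not prevent this. Take the $5$-vertex NBQP: $Q_4$ with a vertex $w$ inserted in the face $v_1v_2v_3v_4$ and joined to $v_1,v_3$. Consider the lift of the disk on whose boundary the lifts of $v_1,v_3$ lie in $V(G)$. Its only interior vertex is the face-vertex lift of $w$, so it exhibits no cut. Instead, choose the lift containing the $V(G)$-lift of an interior vertex $w$ of $D$; the two lifts of $w$ have opposite colours, so such a lift exists. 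Its two $G$-vertices separate that lift of $w$ from the $G$-vertices on the other lift, because every edge of $G$ is a diagonal of a face of $R(G)$ lying on one side. In the converse direction, also note that $\pi(xfyg)$ has four distinct vertices, since $f,g\notin\{\tau_G(x),\tau_G(y)\}$ by strong involutivity.

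\textbf{Non-cellular faces in (2)$\Rightarrow$(3).} Your exclusion of non-cellular faces is incomplete. A disconnected $Q[S]$ could have an annular face containing no essential curve. Also, the claim that $Q[S]$ contains an odd cycle once it has $2|S|-2$ edges is exactly what needs proof. Both gaps close using the $2k-2$ bound:
\begin{itemize}
\item Applied to the components, it forces $Q[S]$ to be connected.
\item Applied to $S\setminus\{v\}$, it gives minimum degree at least $2$.
\item A non-cellular face of a connected graph in $\Pb^2$ contains an essential curve, so $Q[S]$ would lie in a disk. All its cycles would then be contractible, hence even, and a bipartite plane graph on $|S|\ge 4$ vertices has at most $2|S|-4$ edges. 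This contradicts $|E(Q[S])|=2|S|-2$.
\item Facial walks of cellular faces are contractible closed walks, hence of even length at least $4$. Euler's formula then forces length exactly $4$.
\item Minimum degree $\ge 2$ excludes backtracking, so each face is bounded by a genuine $4$-cycle of $Q$.
\end{itemize}

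\textbf{The ``outside'' claim.} Drop ``its outside also contains vertices, because $|S|\ge 4$''. It fails for $|S|=4$ (the example above with $S=\{v_1,\dots,v_4\}$), and it is not needed. That example also shows that ``separating'' must be read as ``bounds a disk with an interior vertex'', which is the reading you used in the first part. Under ``vertices on both sides'', the equivalence (1)$\Leftrightarrow$(2) would be false.
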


\begin{lemma}\label{lem:selfdualsimple}
    Any 3-connected self-dual graph is simple. 
\end{lemma}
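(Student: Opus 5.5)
The plan is to use planar duality: it exchanges cycles of $G$ with minimal edge cuts (bonds) of $G^*$. A loop in $G^*$ corresponds to a bridge of $G$, and a pair of parallel edges in $G^*$ (a $2$-cycle) corresponds to a bond of $G$ with exactly two edges. Here we use the convention that a $3$-connected map has at least $4$ vertices. I will show that a $3$-connected planar map $G$ has no bridge and no $2$-edge bond. Then $G^*$ is simple, and since $G\cong G^*$ as maps (hence as graphs), $G$ is simple.

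\textbf{Step 1: $G$ is $3$-edge-connected.} Since removing any two vertices leaves $G$ connected, every vertex has at least $3$ distinct neighbours, so $\deg(v)\ge 3$ for all $v$. Suppose $\{e,f\}$ is an edge cut with $|\{e,f\}|\le 2$ whose removal splits $V$ into nonempty parts $A$ and $B$.
\begin{itemize}
\item If one part, say $A$, is a single vertex $u$, then all edges at $u$ lie in the cut, contradicting $\deg(u)\ge 3$. So $|A|,|B|\ge 2$.
\item Choose one endpoint of $e$ and one endpoint of $f$, with at most two vertices removed in total, so that some vertex of $A$ and some vertex of $B$ survive. This is possible since $|A|,|B|\ge 2$: if both edges share an endpoint in $A$, delete that vertex and the endpoint of the other edge on the side that still retains a vertex; a short case check suffices.
\item Deleting these vertices destroys every $A$--$B$ edge while leaving vertices on both sides, so $G$ minus at most two vertices is disconnected. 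This contradicts $3$-connectivity.
\end{itemize}
The same argument with a single edge rules out bridges.

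\textbf{Step 2: transfer to the dual.} A loop of $G^*$ is dual to a bridge of $G$, and two parallel edges $e^*,f^*$ of $G^*$ form a cycle whose dual edges $\{e,f\}$ form a bond of $G$. By Step 1 neither exists, so $G^*$ is simple.

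\textbf{Step 3: conclude.} Self-duality gives $G\cong G^*$, and simplicity is preserved under isomorphism, so $G$ is simple.

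The main obstacle is the small case analysis in Step 1, deriving $3$-edge-connectivity from $3$-vertex-connectivity for multigraphs. That step must handle loops and parallel edges carefully. A loop in $G$ itself does not break vertex connectivity, which is exactly why the argument goes through the dual rather than directly through $G$.
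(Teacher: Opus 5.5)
Your proof is correct. The paper gives no argument for this lemma; it is only recalled from Part I, so there is nothing in this text to compare against. Your route is the standard duality argument: $3$-vertex-connectivity gives no bond of size at most $2$, so by cycle--bond duality $G^*$ has no loops or parallel edges, and then $G\cong G^*$ is simple.

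Two small remarks:
\begin{itemize}
\item The case check in Step 1 can be done uniformly. Write $e=a_1b_1$ and $f=a_2b_2$ with $a_i\in A$, $b_i\in B$, and delete $a_1$ and $b_2$. Since $|A|,|B|\ge 2$, both $A\setminus\{a_1\}$ and $B\setminus\{b_2\}$ are nonempty, and no edge joins them. This holds whatever coincidences occur among the endpoints.
\item The convention that a $3$-connected graph has at least $4$ vertices is essential, not cosmetic. Take a triangle with one edge doubled. It is a self-dual map: its dual is again a triangle with one doubled edge, with the digon face corresponding to the degree-$2$ vertex. It is not simple, yet deleting any two of its vertices leaves a connected graph. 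Your degree argument (every vertex has at least $3$ distinct neighbours) is exactly where the convention enters.
\end{itemize}
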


With these results we have the following short proof of Theorem \ref{thm:stronglycritical}.

\begin{proof}[New proof of Theorem \ref{thm:stronglycritical}]
Note that  $X$ is strongly critical if and only if for any proper subset $S\subset X$ with at least 4 vertices we have at most $2|S|-3$ diameters spanned by $S$.

Lemma \ref{lem:3conequiv} and Lemma \ref{lem:selfdualsimple} tell us that the skeleton $G$ is simple and 3-connected, if and only if $\overline R(G)$ has no proper subset $S$ with at least 4 vertices and spanning more than  $2|S|-3$ edges. As $\overline R(G)=\qt(X)$ by Lemma \ref{lem:diameterispolyhedral}, this means that the skeleton is simple and 3-connected if and only if $X$ is strongly critical.
    
\end{proof}

\subsection{Colorings}
As we mentioned before Borsuk's conjecture can be viewed as a coloring question of diameter graphs. Hence, coloring properties of diameter graphs have been studied, see for example \cite{hujter2014multiple}. In \cite{lopez2023borsuk} the authors showed that strongly critical configurations correspond to the 4-critical diameter graphs. 

Interestingly, the coloring properties of NBQP maps are also very well studied. This started with the aforementioned theorem of Youngs (Theorem \ref{thm:youngs}). Later this was strengthened in various ways. As we have connected diameter graphs and NBQP maps, we can simply push results from one world to the other. For now, we decided to mention two such corollaries.  

Firstly, let $Q$ be a quadrangulation of the projective plane and let $c:V(Q)\rightarrow\{1,2,3,4\}$ be a proper vertex-coloring. A face of $Q$ is called \emph{rainbow} if its four vertices receive the four distinct colors. We say that a rainbow face is of \emph{type $t$}, where
$t\in\{2,3,4\}$, if the two vertices colored 1 and $t$ are opposite on the boundary of the face. Equivalently, the two remaining colors are also opposite.

\begin{theorem}[\cite{mohar2006quadrangulations} and also \cite{KaiserLoNakamotoNozakiOzeki2025}]\label{thm:bettercol}
Let $Q$ be a non-bipartite quadrangulation of the projective plane. Then, in every proper 4-coloring $c:V(Q)\to\{1,2,3,4\}$, there are at least three rainbow faces. More precisely, there is at least one rainbow face of each of the three types $2,3,4$.
\end{theorem}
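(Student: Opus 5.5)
The plan is to turn the statement into a winding-number parity argument on the canonical double cover $\widetilde Q$ from Section~\ref{sec:rotation}. By the symmetry of the three types we may fix $t\in\{2,3,4\}$ and write $\{1,2,3,4\}=\{1,t,a,b\}$. Place the four colours on the unit circle $S^1$ at angles $0$ (colour $1$), $\pi/2$ (colour $a$), $\pi$ (colour $t$) and $3\pi/2$ (colour $b$). We then map each edge $uv$ of $Q$ to an arc of $S^1$ joining the images of $c(u)$ and $c(v)$:
\begin{itemize}
\item a \emph{quarter arc} if the two colours are adjacent on the circle;
\item a fixed choice of \emph{half arc} if the colours are antipodal, i.e.\ $\{1,t\}$ is routed through $a$, and $\{a,b\}$ is routed through $t$.
\end{itemize}
This defines a map $\phi$ from the $1$-skeleton of $Q$ (and of $\widetilde Q$) to $S^1$. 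For each face $f$ let $w(f)\in\mathbb Z$ be the winding number of $\phi$ along $\partial f$, oriented by the rotation system of the lift.

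The first key step is a local case check over all properly coloured $4$-cycles. We want $w(f)=\pm1$ exactly when $f$ is rainbow of type $t$, since its boundary reads $1,a,t,b$ up to rotation and reflection. In every other case, namely rainbow of the other two types or using at most three colours, we want $w(f)=0$. This is a finite check, but the routing of the antipodal edges must be chosen so that it really gives $0$. If the fixed routing fails in some case, the first fix to try is to route antipodal edges according to the orientation of the face in $\widetilde Q$. Alternatively, one can weaken the claim to ``$w(f)$ is odd iff $f$ is rainbow of type $t$''; only parity is needed below.

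The second step is global. On the sphere $\widetilde Q$ the face boundaries sum, with orientation, to zero in the cycle space, so $\sum_f w(f)$ over the faces of $\widetilde Q$ vanishes. That alone gives nothing, so instead I would cut along the antipodal symmetry. Pick an essential cycle $C$ of $Q$. In an NBQP the essential cycles are exactly the odd cycles, because $Q$ is a parity embedding. Its lift is a closed curve $\widetilde C$ on the sphere that splits it into two antipodal discs, and each disc is a lift of $Q$ cut open along $C$. Summing $w$ over the faces of one disc gives the winding of $\phi$ along $\widetilde C$. This is the winding of the lift of $C$ traversed twice, i.e.\ of a path covering $C$ once and then its antipodal copy, which carries the same colours. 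Since $C$ is odd, the number of quarter steps along $C$ is odd. Here one uses that each antipodal edge contributes $2$ quarter steps and each other edge $1$, and that the colour classes $\{1,t\}$ and $\{a,b\}$ alternate along quarter steps. So the total winding along $C$ alone is an odd multiple of a half turn, and along the doubled curve it is odd.

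Hence the number of faces of $Q$ with odd $w$ is odd, in particular nonzero. By the first step this produces a rainbow face of type $t$, and applying this for $t=2,3,4$ yields the three rainbow faces. I expect the main obstacle to be the interplay of the local check with the orientation reversal in $\mathbb P^2$: in a single disc of the double cover the faces acquire consistent orientations, but the antipodal edge routings must be compatible with the antipodal map, which reverses orientation. The parity computation along $C$ has to be done carefully, tracking the half-arc contributions of edges coloured $\{1,t\}$ and $\{a,b\}$, so that the ``odd number of quarter turns'' claim is actually correct.
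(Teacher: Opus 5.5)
Both of your steps fail, and the failure is structural: no choice of routing fixes it.

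\textbf{The local step.} With your routing, a type-$a$ face $1,t,a,b$ has angular displacement $\pi-\pi/2+\pi+\pi/2=2\pi$. So it winds once, exactly like a type-$t$ face, while a type-$b$ face winds $0$. No routing defined on the edges of $K_4$ can isolate a single type, even mod $2$. Winding is a homomorphism on the cycle space of $K_4$. Let $H_t$ denote the Hamilton $4$-cycle in which $1$ and $t$ are opposite. Every edge of $K_4$ lies on exactly two of $H_2,H_3,H_4$, so $H_2+H_3+H_4=0$ over $\mathbb{Z}_2$, and the parities of $w(H_2),w(H_3),w(H_4)$ sum to $0$. Hence either no type or exactly two types have odd winding. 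Routing face by face, as in your suggested fix, sends an interior edge to different arcs from its two sides. That destroys the cancellation your global step relies on.

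\textbf{The global step.} $\phi$ maps the closed walk $C$ to a closed loop in $S^1$, so $w(C)$ is an integer and $\widetilde C$ has winding $2w(C)$, which is even. You conflated two quantities. The unsigned number of quarter steps along an odd $C$ is indeed $\equiv 2\pmod 4$, so even your own count is not odd. The signed displacement, which is what determines winding, is always $\equiv 0 \pmod 4$. So your identity actually says that an \emph{even} number of faces have odd $w$.

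\textbf{Why no winding argument can work.} Let $\phi$ be any map from the $1$-skeleton of $Q$ to $S^1$, with edges routed however you like. Fix real lifts $\theta(v)$ of the vertex angles. For an edge $e=uv$ with angular displacement $\Delta_e$, set $\gamma(e)=\bigl(\Delta_e-\theta(v)+\theta(u)\bigr)/(2\pi)\in\mathbb{Z}$. Then $w(f)$ is a signed sum of the $\gamma(e)$ over $\partial f$. Each edge lies on exactly two face sides, so $\sum_f w(f)\equiv 0\pmod 2$. A one-dimensional invariant can therefore never force a rainbow face; you need a genuinely two-dimensional one.

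\textbf{The missing idea.} The paper only cites this result; the standard argument is a mod-$2$ degree.
\begin{enumerate}
\item Embed $K_4$ in $\mathbb{P}^2$ as $Q_4$. Its three faces are bounded precisely by $H_2,H_3,H_4$.
\item Extend $c\colon Q\to Q_4$ to a continuous map $F\colon\mathbb{P}^2\to\mathbb{P}^2$. Send each non-rainbow face into the $1$-skeleton, which is possible because its boundary walk in $K_4$ backtracks. Send each type-$t$ rainbow face homeomorphically onto the face bounded by $H_t$.
\item Count preimages of an interior point of the face bounded by $H_t$. This shows the number of type-$t$ rainbow faces is $\equiv\deg_2F \pmod 2$ for each $t$.
\item An essential cycle of $Q$ is odd, so it maps to an odd closed walk of $K_4$. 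That walk is essential in $Q_4$, since the faces of $Q_4$ are even and length parity therefore detects $H_1(\mathbb{P}^2;\mathbb{Z}_2)$.
\item Hence $F^*x=x$ for the generator $x\in H^1(\mathbb{P}^2;\mathbb{Z}_2)$. Then $F^*(x^2)=x^2\neq 0$ forces $\deg_2F=1$. Equivalently, $F$ lifts to an odd map $S^2\to S^2$, which has odd degree.
\end{enumerate}
So every type occurs an odd number of times, in particular at least once.
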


This is also sharp in the sense that there is always a 4-coloring where we only have three rainbow faces. We can simply use the generating method from Section \ref{sec:generating} and maintain such a coloring. Indeed if we simply keep the original colors at each split, the coloring remains proper and the new face will never be rainbow.  

For extremal sets we have the following corollary of Theorem \ref{thm:bettercol} and Lemma \ref{lem:diameterispolyhedral}. 

\begin{corollary}
    If $X\subset \R^3$ is extremal such that $|X|\ge 4$, then in any proper 4-coloring $X\to\{1,2,3,4\}$ of $\qt(X)$, there are at least three rainbow faces of the diameter quadrangulation $\qt(X)$. More precisely, there is at least one rainbow face of each of the three types $2,3,4$.
\end{corollary}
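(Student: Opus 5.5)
The corollary follows by transferring Theorem \ref{thm:bettercol} through the identification of $\qt(X)$ with the diameter graph. No new geometry is needed.

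First, take the extremal set $X$ with $|X|\ge 4$. By Corollary \ref{cor:canonicalissisd} and Theorem \ref{thm:2connect}, the skeleton $\skel\Bc(X)$ is a 2-connected SISD map. Hence, by Theorem \ref{thm:commute}, $\qt(X)=\overline R(\skel\Bc(X))$ is a simple non-bipartite quadrangulation of the projective plane.

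Next, fix a proper 4-coloring $X\to\{1,2,3,4\}$. By Lemma \ref{lem:diameterispolyhedral}(a), the bijective labeling $\vu:V(\qt(X))\to X$ identifies the edges of $\qt(X)$ with the diameter pairs of $X$. So the coloring pulls back along $\vu$ to a proper vertex coloring $c=\chi\circ\vu$ of $\qt(X)$. Properness is preserved exactly because adjacency in $\qt(X)$ coincides with adjacency in $\Gamma(X)$.

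Now apply Theorem \ref{thm:bettercol} to $\qt(X)$ and $c$. It gives at least three rainbow faces, with at least one of each type $t\in\{2,3,4\}$. Being rainbow and having a given type depend only on the colors of the four face vertices and on which pairs are opposite on the face boundary. Both are intrinsic to the map $\qt(X)$, so the statement transfers verbatim to the faces of the diameter quadrangulation under the identification by $\vu$.

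The only step that needs any care is checking that $\qt(X)$ really is non-bipartite and simple, so that the hypothesis of Theorem \ref{thm:bettercol} holds. This is already supplied by Theorem \ref{thm:commute} together with Lemma \ref{lem:diameterispolyhedral}. The remaining steps are bookkeeping.
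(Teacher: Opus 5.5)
Your proposal is correct and is essentially the paper's argument. The paper states this corollary as a direct consequence of Theorem \ref{thm:bettercol} applied to $\qt(X)$, which Lemma \ref{lem:diameterispolyhedral} identifies as an NBQP whose graph is the diameter graph of $X$. Your extra bookkeeping (pulling the coloring back along $\vu$ and checking simplicity and non-bipartiteness via Theorem \ref{thm:commute}) just makes that identification explicit.
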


Secondly, the local chromatic number of NBQP maps was also considered. A proper vertex-coloring of a graph $G$ is a local $k$-coloring if at most $k-1$ colors appear in the neighborhood of any vertex of $G$. The local chromatic number of $G$ is the smallest $k$ such that $G$ has a local $k$-coloring. Tardos, Simonyi and Mohar extended Youngs' theorem to local chromatic number.

\begin{theorem}[Theorem 1.3 in \cite{mohar2013local}]
    The local chromatic number of a quadrangulation of the projective plane is two or four.
\end{theorem}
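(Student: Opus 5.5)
The statement is the theorem of Tardos, Simonyi and Mohar: every quadrangulation of the projective plane has local chromatic number two or four. My plan is to split according to bipartiteness. By Youngs' theorem (Theorem \ref{thm:youngs}) the chromatic number is $2$ or $4$.

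\textbf{Bipartite case.} The local chromatic number equals $2$, since a proper $2$-coloring shows only one color in every neighborhood.

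\textbf{Non-bipartite case.} The local chromatic number is at most $\chi=4$, so the real content is to exclude a local $3$-coloring. Suppose $c$ is a proper coloring with arbitrarily many colors, but every closed neighborhood $N[v]$ uses at most $3$ colors. I would use the standard topological approach. Pass to the canonical double cover $\widetilde Q$, a planar quadrangulation with the antipodal free involution. Lift $c$ to $\widetilde Q$, giving a coloring invariant under the involution.

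The aim is a $\mathbb{Z}_2$-equivariant map from the sphere, or from a suitable complex such as the box complex, into a space of too small dimension. For graphs whose box complex has $\mathbb{Z}_2$-index at least $2$, the general bound of Simonyi and Tardos via the Ky Fan or Tucker lemma gives local chromatic number at least $\lceil t/2\rceil+2$, where $t$ is the coindex. This yields $\psi\ge 3$, which is not enough.

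The sharper step therefore needs the parity structure of the quadrangulation itself. My concrete plan is the following.

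1. Assume a local $3$-coloring and look at each face $abcd$. The opposite pairs $a,c$ share the neighbors $b,d$, so the colors on a face are constrained.
2. Define a $\mathbb{Z}_2$-valued winding or flow quantity, in the style of Youngs' proof. For each pair of colors, consider the subgraph induced on edges colored with that pair and count its crossings with an essential noncontractible curve.
3. Youngs' parity argument shows that for a non-bipartite $Q$ the faces form a nontrivial cocycle, so some face must be rainbow with respect to a suitable projection of the colors. This is in the spirit of Theorem \ref{thm:bettercol}.
4. Then argue that a rainbow face $abcd$, together with the local $3$-color restriction at $a$ and at $b$, produces a vertex whose neighborhood contains three distinct colors.

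The main obstacle is step 4, turning the existence of rainbow faces into a vertex that sees three colors. Since the coloring may use unboundedly many colors, my first attempt there is to merge colors: map the color classes to $\{1,2,3,4\}$ by a map that keeps the coloring proper while preserving the local-$3$ property. Then apply the Mohar theorem, which gives rainbow faces of all three types. Finally, combine faces of different types sharing vertices, via a discrete degree or winding count around the cover, to force a vertex with $3$ neighbor colors.
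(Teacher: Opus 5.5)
\textbf{There is a genuine gap: the lower bound $\psi\ge 4$ is never proved, and the colour-merging step is equivalent to the whole statement.} Your bipartite case and the bound $\psi\le\chi=4$ from Theorem \ref{thm:youngs} are fine. The entire content is the lower bound $\psi\ge 4$ for non-bipartite $Q$.

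The decisive problem is the merging step, not step 4. Merging colour classes preserves the local condition for free. It stays proper only if the graph on the colours (two colours adjacent when they occur at the ends of an edge of $Q$) is $4$-colourable, and you give no reason for that. In fact no such reason can exist independently of the theorem.

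In any proper $4$-colouring of a $4$-chromatic graph, some vertex of colour $4$ sees all of the colours $1,2,3$. Otherwise, recolour every vertex of colour $4$ with a colour missing from its neighbourhood. Colour class $4$ is independent, so this gives a proper $3$-colouring, a contradiction. Hence a local $3$-colouring of a $4$-chromatic graph always uses at least five colours and can never be merged into four. Since $Q$ is $4$-chromatic by Youngs, your merging claim is literally equivalent to $\psi(Q)\ge 4$, the statement you are proving.

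This also shows the plan is misdirected. Once the merge is granted, the recolouring remark finishes immediately, so Theorem \ref{thm:bettercol} and the winding count play no role. Without the merge, Theorem \ref{thm:bettercol} (a statement about $4$-colourings) is unavailable. In any case a rainbow face is only a rainbow $K_{2,2}$ in which each vertex sees exactly two colours, which the local condition allows. That is the same information as the Ky Fan type bound $\psi\ge 3$ that you already found insufficient. What is needed is a vertex with three differently coloured neighbours. Step 4, ``a discrete degree or winding count'', defines no invariant and gives no reason it would be nonzero; it is a placeholder for the entire argument.

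The paper itself gives no proof of this statement; it simply quotes it from Mohar, Simonyi and Tardos. The ingredient your plan lacks is a stronger topological obstruction than the one you quoted. The double cover $\widetilde Q$ yields a $\mathbb{Z}_2$-map from $S^2$ into $\mathrm{Hom}(K_2,Q)$ itself, equivalently into the box complex with the common-neighbour condition, and not merely into its suspension. The suspended version alone only gives the bound $\psi\ge 3$. Simonyi, Tardos and Vre\'{c}ica showed that this unsuspended obstruction forces, in every proper colouring with an arbitrary palette, a vertex seeing three colours in its neighbourhood. A repair of your approach must work directly with an unbounded palette and use this extra equivariant structure of $\widetilde Q$. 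Face-by-face rainbow counting does not capture it.
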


A non-bipartite graph cannot have local chromatic number 2, hence NBQP maps have local chromatic number 4. 

\begin{corollary}
     If $X\subset \R^3$ is extremal such that $|X|\ge 4$, then in any proper $k$-coloring $X\to\{1,\dots, k\}$ of the diameter graph $\Gamma(X)$, there is a vertex $v$ and three neighbors of $v$ such that they all have different colors. 
\end{corollary}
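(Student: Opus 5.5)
The plan is to transfer the local chromatic number result to diameter graphs through the identification of $\Gamma(X)$ with $\qt(X)$ from Lemma \ref{lem:diameterispolyhedral}. First, since $X$ is extremal and $|X|\ge 4$, Lemma \ref{lem:diameterispolyhedral} gives an NBQP map $\qt(X)$ whose graph is $\Gamma(X)$ via a bijective labeling. Any proper $k$-coloring of $\Gamma(X)$ is therefore a proper coloring of the graph of a non-bipartite quadrangulation of the projective plane.

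Next, I argue by contradiction. Suppose there is a proper coloring $c:X\to\{1,\dots,k\}$ in which no vertex has three neighbors of pairwise different colors. Then every neighborhood uses at most two colors, so $c$ is a local $3$-coloring of $\gr(\qt(X))$, and the local chromatic number of $\qt(X)$ is at most $3$.

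By the theorem of Tardos, Simonyi and Mohar quoted above, the local chromatic number of a projective quadrangulation is $2$ or $4$, so it must be $2$. A local $2$-coloring uses only one color on each neighborhood, and I expect to have to spell out why this forces bipartiteness. Take a connected component and fix a vertex $v$. All neighbors of $v$ share one color $\beta$. Each such neighbor $w$ has all of its neighbors, including $v$, colored $c(v)=\alpha$. Propagating along paths shows that vertices at even distance from $v$ get $\alpha$ and vertices at odd distance get $\beta$. Since $\alpha\neq\beta$, this is a proper $2$-coloring of the component. The graph of $\qt(X)$ is connected as a $2$-cell embedded quadrangulation, so the whole graph would be bipartite, contradicting non-bipartiteness. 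Hence some vertex $v$ has neighbors in at least three colors, and choosing one neighbor of each of three distinct colors gives the claimed triple.

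The main obstacle is small: beyond verifying that no vertex with three differently colored neighbors is exactly a local $3$-coloring, everything follows directly from Lemma \ref{lem:diameterispolyhedral} and the cited theorem. A bipartite graph has no neighborhood with more than one color only under a proper $2$-coloring, and our coloring need not be such a coloring. This is why the propagation argument above, rather than an appeal to bipartite colorings, is what rules out the value $2$.
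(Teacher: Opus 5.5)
Your proposal is correct and follows the paper's route. You identify $\Gamma(X)$ with the graph of the NBQP $\qt(X)$ via Lemma~\ref{lem:diameterispolyhedral}, apply the Mohar--Simonyi--Tardos theorem that the local chromatic number is $2$ or $4$, and exclude $2$ by non-bipartiteness; your propagation argument just spells out the one-line remark the paper makes that a non-bipartite graph cannot have local chromatic number $2$.
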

This means that not only do we need 4 sets in a Borsuk partition of an extremal set $X$, but no matter how we partition $X$ into subsets of smaller diameter, there would be a point which is blocked from at least three other groups. 

\subsection{Constructing bodies of constant width}
A body of constant width in $\R^d$ is a convex body whose width is 1 in every possible direction. Generally, it is not easy to construct bodies of constant width. 
Meissner bodies are a special family of 3-dimensional bodies of constant width that were introduced in \cite{MON17}. Here we follow the modern treatment by Hynd \cite{hynd2024density}\footnote{That is, we allow any extremal set $X$, not just strongly critical ones.}. Recall that $\Bc(X):=\bigcap_{x\in X} B(x)$ even for an infinite set $X$. Let $X$ be an extremal set on $n\ge 4$ vertices and let $(e_1,e_1^*),(e_2,e_2^*),\dots, (e_{n-1},e_{n-1}^*)$ denote the dual edge pairs of $\skel\Bc(X)$. That is, $e_i$ and $e_i^*$ are circular arcs in $\R^3$. Then a body of the form $\Bc(X\cup e_1\cup\dots \cup e_{n-1})$ is called a \emph{Meissner polyhedron}. In other words, we take the Reuleaux polyhedron $\Bc(X)$ and smooth one edge of each dual edge pair by intersecting the body with unit balls centered on the other edge of the pair. A Meissner polyhedron is always a body of constant width. Furthermore, Meissner polyhedra are dense within the space of constant width bodies in the Hausdorff topology \cite{sallee1970reuleaux,hynd2024density}.

If $X$ forms the corners of a regular tetrahedron, then there are two essentially different ways to smooth the edges. The two resulting shapes are known as Meissner tetrahedra. Our results constructed new extremal sets, hence we obtained new Meissner polyhedra. As the combinatorial structure of the boundary of a Meissner polyhedron is determined by the underlying Reuleaux polyhedron and the choices between dual edges, our method also generates every such cell structure.

 The Blaschke-Lebesgue theorem says that the smallest-area body of constant width in $\R^2$ is the Reuleaux triangle. The  Blaschke–Lebesgue problem asks to solve this question in higher dimensions. Bonnesen and Fenchel conjectured that the two Meissner tetrahedra have the minimal volume among three-dimensional bodies of constant width \cite{bonnesen1948konvexe}.  For some recent results on the 3-dimensional case, see the works of Bogosel \cite{bogosel2024,BOG24,bogosel2026isoperimetric}.

\subsection{Connection to Steinitz's theorem}\label{subs:steinitz}

The classical theorem of Steinitz says that a simple graph is isomorphic to the skeleton of a 3-polytope if and only if it is 3-connected and planar \cite{Steinitz1922}. Steinitz's theorem has a number of different proofs. In \cite{BarnetteGruenbaum1969Steinitz} we find a proof by Barnette and Grünbaum based on the following ideas.
\begin{itemize}
    \item They show that any simple 3-connected graph can be reduced to a wheel graph by a sequence of edge deletion and edge contraction steps while preserving that the graph is simple and 3-connected. Wheel graphs can be realized as pyramids.
    \item They show that if $G'=G-e$ and $G'$ is already realized as a polyhedron, then we can realize $G$ by perturbing the realization of $G'$.  Suppose $e$ has endpoints $u,v$ and incident faces $F_a,F_b$.  In $G'$, the two faces adjacent to $e$ have merged into one face, and $uv$ is only a chord of that face. We want to perturb the realization so that this chord becomes a real edge separating two distinct faces.
    \item They show that there is an ordering of $V\cup F$ such that it starts with $u,v,F_a,F_b$ and every element is incident with at most three earlier ones. The existence of the ordering relies on the 3-connectedness of $G$. Then we can realize $G$ by placing each vertex and face one by one following this order. We can use the first four to realize the edge $e$. Then, each new vertex has to lie on at most three already chosen planes, and a new face has to pass through at most three already chosen points, so the choices are possible in $\R^3$. If the perturbation is sufficiently small, we stay close to the polyhedron of $G'$ and the convexity is preserved, hence the resulting polyhedron realizes $G$. 
    \item The edge contraction step can be performed by taking an edge deletion step on the polar polyhedron. 
\end{itemize}

Our proof of Theorem \ref{thm:main_nbqp_to_diam} is clearly a variant of this proof. The main difference is that we need a reduction step that preserves self-duality. As we discussed in Section \ref{sec:generating}, in a 2-connected strongly involutive self-dual map $G$, other than $K_4$, we can always find a dual edge pair $(e,e^*)$ so that deleting $e$ and contracting $\tau_G(e)$ preserves 2-connectedness and strongly involutive self-duality.  
Then in the perturbation part we must place a vertex and the corresponding face simultaneously. We could hope for an ordering of the vertices such that each vertex forms a diameter with at most three previous vertices and the ordering starts with the four vertices of the split. As we consider 2-connected graphs, this is not always possible. Consider, for example, the split shown in Figure \ref{fig:noordering}. If we start the ordering with $a,b,c,d$ then the last vertex of the ordering is clearly adjacent to four previous vertices. 

\begin{figure}
    \centering
    \includegraphics[width=0.25\linewidth]{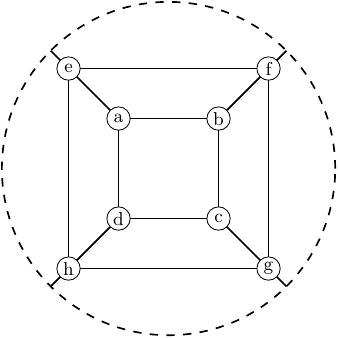}
    \includegraphics[width=0.25\linewidth]{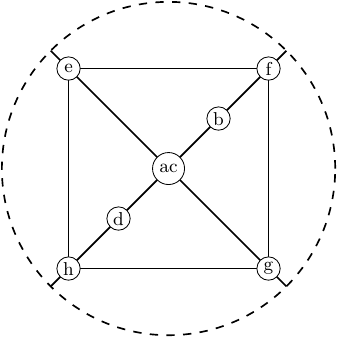}
    \caption{A face contraction without a good ordering of the vertices.}
    \label{fig:noordering}
\end{figure}

This idea could be fixed by showing that given an NBQP map $Q$ there is always a face contraction $abcd$ such that there is a good ordering starting with $a,b,c,d$. We decided to go with a rigidity argument instead, which works for any face contraction.

\section{Open problems}\label{sec:openproblems}

\subsection{Diameter graphs and bodies of constant width with symmetry}
Mani showed that every simple 3-connected planar graph has a representation as the skeleton of a usual convex 3-polytope such that every automorphism of the graph extends to an isometry of the polytope \cite{Mani1971}. Does this extend to diameter graphs and Reuleaux polyhedra? We believe this is the case. We must be careful: in the non-3-connected case there might be graph automorphisms of the graph of $Q$ that are not map automorphisms. Let $Aut(Q)$ denote the set of map automorphisms of $Q$.

\begin{conjecture}
    Suppose $Q$ is a simple non-bipartite quadrangulation map of the projective plane. Then there is an extremal set $X\subset \R^3$ such that the diameter quadrangulation is $Q$, and $Aut(Q)$ acts as a group of isometries on $X$. 
\end{conjecture}

This is equivalent to the following.

\begin{conjecture}
    Suppose $G$ is a 2-connected strongly involutive self-dual map.  Then there is an extremal set $X\subset \R^3$ such that $\skel (\Bc(X))=G$ and $Aut(G)$ acts as a group of isometries on $\Bc(X)$. 
\end{conjecture}

Sallee proved that Meissner polyhedra are dense in the space of bodies of constant width in $\R^3$ with respect to the Hausdorff distance \cite{sallee1970reuleaux, hynd2024density}. Some bodies of constant width have nontrivial symmetry; see, for example, \cite{arman2024small,arman2025small} for bodies of constant width with tetrahedral symmetry. It would be natural to approximate these with Meissner bodies that have the same symmetry group. Unfortunately, this is not always possible, since we modify only one of the edges from each dual-edge pair. For example, it is not hard to see that there is no Meissner body with tetrahedral symmetry. On the other hand Reuleaux polyhedra are also dense in the space of bodies of constant width, and they can exhibit the necessary symmetries. 
\begin{problem}\label{prob:densesym}
    Let $H$ be a symmetry group of a Reuleaux polyhedron. Is the family of Reuleaux polyhedra having symmetry group containing $H$ dense in the family of bodies of constant width having symmetry group $H$?
    
\end{problem}

\subsection{Stress-free diameter frameworks in higher dimensions}
As mentioned in the introduction, diameter graphs are also studied in higher dimensions. Already in dimension 4 we can have quadratically many diameter pairs \cite{erdHos1959sets,swanepoel2009unit}. This also shows that in $\R^4$ not all maximal diameter frameworks are stress-free. Indeed, by \eqref{eq:dimension} we know that a stress-free framework in $\R^4$ with at least 4 vertices has at most $4n-10$ bars. This raises the  natural question of the characterization of stress-free diameter frameworks in $\R^d$. In particular we are interested in the following questions.

\begin{question}
    What is the maximal number of edges in an $n$-vertex stress-free diameter framework in $\R^d$?
\end{question}
\begin{question}
    Suppose $X\subset \R^4$ is a finite set of points in general position, i.e. no five lie on a hyperplane. Is it true that the diameter framework of $X$ is stress-free?
\end{question}

Kalai proposed a number of conjectures that could, in some sense, save the Borsuk conjecture. One of them is the following. 
\begin{conjecture}[Kalai \cite{Kalai_2015}]
    If $G$ is a stress-free diameter framework in $\R^d$  then $G$ is $(d+1)$-colorable.
\end{conjecture}

It would also be interesting to generalize the vertex splitting method to higher dimensions and study how it relates to stress-free diameter frameworks.

\subsection{Tangency graphs of translates}
The diameter graphs can also be understood as tangency graphs of pairwise intersecting unit balls. Indeed, if $\diam (X)=2$, then the diameter graph of $X$ is the same as the tangency graph of the unit balls centered at the points of $X$. Therefore, a graph is the tangency graph of pairwise intersecting unit balls in $\R^3$ if and only if it is a subgraph of an NBQP graph. What can we say about translates of other bodies? 
\begin{question}
Suppose $C$ is a smooth strictly convex body in $\R^3$. Which graphs are tangency graphs of families of pairwise intersecting translates of $C$?
\end{question}

If $C$ is centrally symmetric, we can also understand this question as changing the norm. Given a norm, which graphs are diameter graphs using this norm?

\section*{Acknowledgements}
I thank Andreas Holmsen, Konrad Swanepoel, László Lovász, Tibor Jordán, Márton Naszódi, Máté Jánosik and Dömötör Pálvölgyi for helpful discussions and valuable comments. I am forever grateful to my wife Anna for her continuous support.

\section*{Declaration of AI usage}

\begin{center}
\begin{tabular}{l c}
\hline
\textbf{Activity} & \textbf{AI used} \\
\hline
Asking the research question     & No \\
Coming up with the approach      & No \\
Proof development                & No \\
Writing and exposition           & Yes \\
Checking for mistakes                & Yes \\
Other supporting tasks           & Yes \\
\hline
\end{tabular}
\end{center}

\paragraph{Details.}

The mathematical results are due to the author. That is, every new theorem was originally proved by the author (somewhere around 2024), and a proof for each was originally written by the author. Then, AI was extensively used to clarify the ideas in the paper and to improve the presentation. This includes simplifying some of the original proofs and catching numerous inaccuracies. In some cases, the AI also suggested better definitions and clearer arguments. The AI also helped the author realize that some of the ideas have already appeared in the literature. Most notably, during the conversations with AI the author has learned about the proof of Steinitz's theorem that is now presented in Subsection \ref{subs:steinitz}. Then the author realized that the proof of Theorem \ref{thm:main_skeleton} is a direct analog of that proof. AI help was also used for creating some of the figures, especially the ones in the Appendix.

\bibliographystyle{plain} 
\bibliography{sample}

\appendix
\section{Examples}\label{app:examples}

In this appendix, we collect several examples. Each shows a non-bipartite quadrangulation of the projective plane $Q$ and the corresponding strongly involutive self-dual map $G$.  We also include figures for a diameter realization of $Q$ and a Reuleaux polyhedron with skeleton $G$. See the webpage \cite{damasdi_selfdual_visualizations} for more examples.

\begin{figure}[htbp]
    \centering

    \begin{minipage}[t]{0.48\textwidth}
        \centering
        \includegraphics[width=0.7\linewidth]{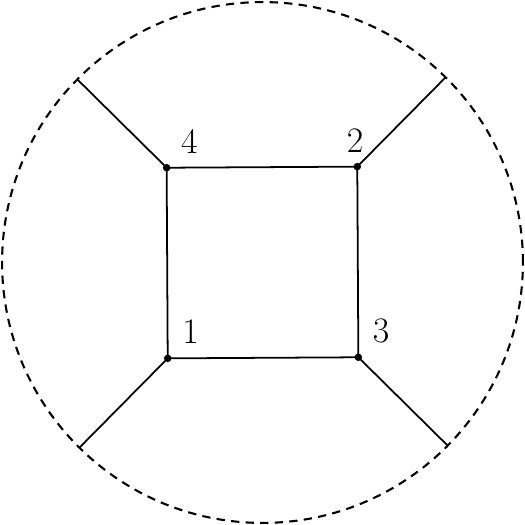}
        \par\smallskip
        (a) NBQP map
    \end{minipage}
    \hfill
    \begin{minipage}[t]{0.48\textwidth}
        \centering
        \includegraphics[width=0.7\linewidth]{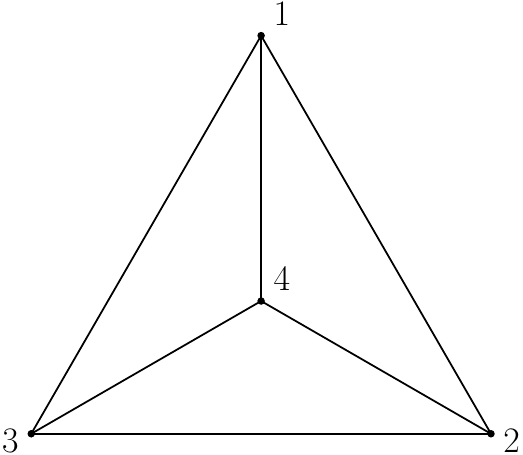}
        \par\smallskip
        (b) SISD map
    \end{minipage}
    
    \medskip
    
    \begin{minipage}[t]{0.48\textwidth}
        \centering
        \includegraphics[width=0.6\linewidth]{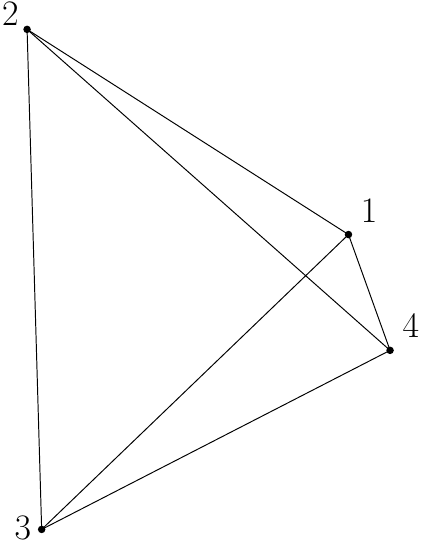}
        \par\smallskip
        (c) Diameter framework
    \end{minipage}
    \hfill
    \begin{minipage}[t]{0.48\textwidth}
        \centering
        \includegraphics[width=0.7\linewidth]{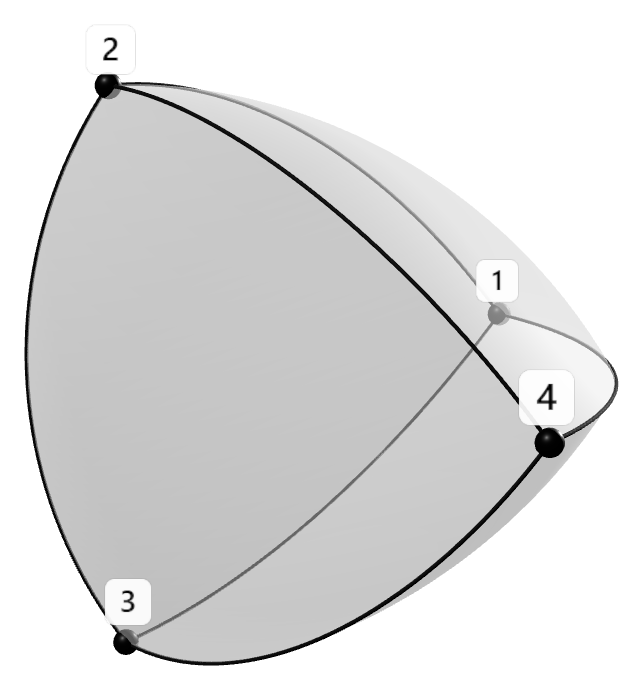}
        \par\smallskip
        (d) Reuleaux polyhedron
    \end{minipage}
    \refstepcounter{example}
    \label{fig:example-A}
    \caption*{Example \theexample. The unique NBQP and SISD map on 4 vertices. The two graphs are isomorphic as abstract graphs, but this is just happenstance. }
\end{figure}

\begin{figure}[htbp]
    \centering

    \begin{minipage}[t]{0.48\textwidth}
        \centering
        \includegraphics[width=0.7\linewidth]{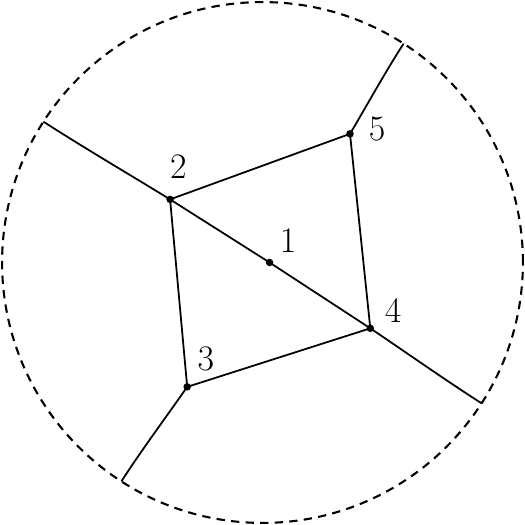}
        \par\smallskip
        (a) NBQP map
    \end{minipage}
    \hfill
    \begin{minipage}[t]{0.48\textwidth}
        \centering
        \includegraphics[width=0.7\linewidth]{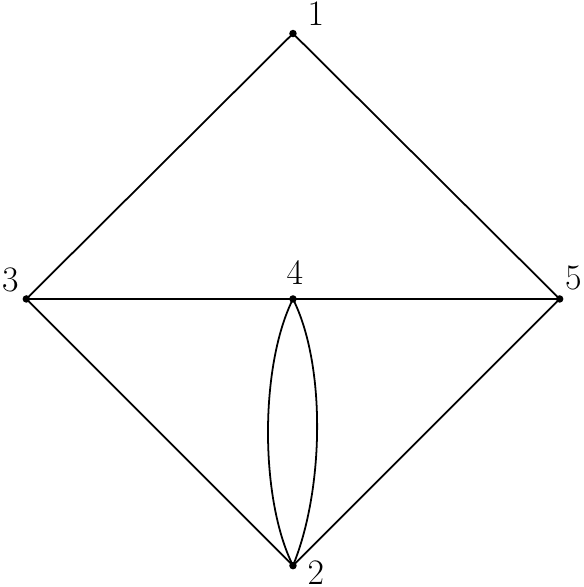}
        \par\smallskip
        (b) SISD map
    \end{minipage}

    \medskip

    \begin{minipage}[t]{0.48\textwidth}
        \centering
        \includegraphics[width=0.7\linewidth]{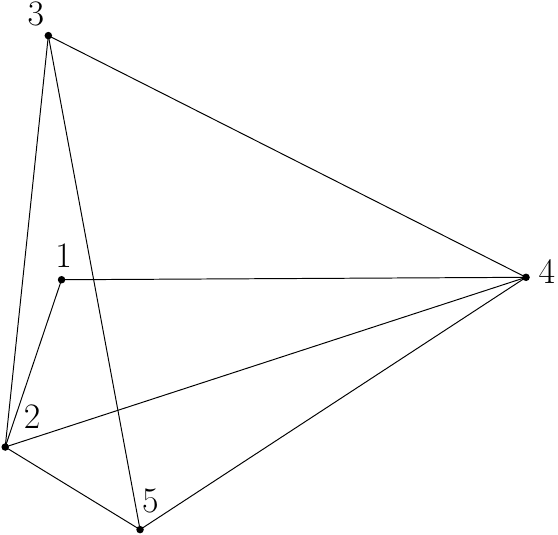}
        \par\smallskip
        (c) Diameter framework
    \end{minipage}
    \hfill
    \begin{minipage}[t]{0.48\textwidth}
        \centering
        \includegraphics[width=0.7\linewidth]{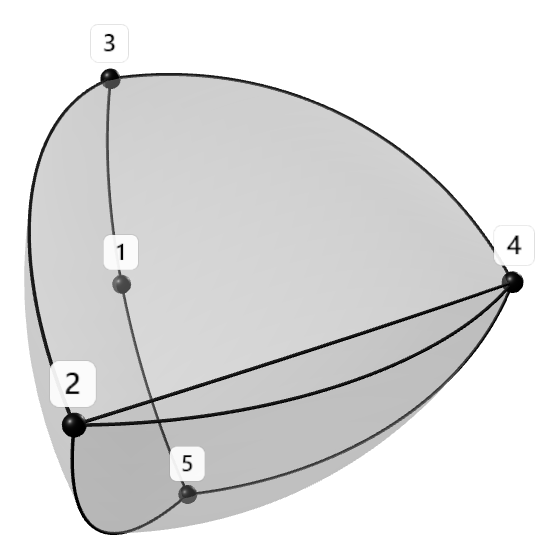}
        \par\smallskip
        (d) Reuleaux polyhedron
    \end{minipage}

    \refstepcounter{example}
    \label{fig:example-B}
    \caption*{Example \theexample. The unique NBQP and SISD map on 5 vertices. The SISD map and Reuleaux body have parallel edges between 2 and 4.}
\end{figure}

\begin{figure}[htbp]
    \centering

    \begin{minipage}[t]{0.48\textwidth}
        \centering
        \includegraphics[width=0.7\linewidth]{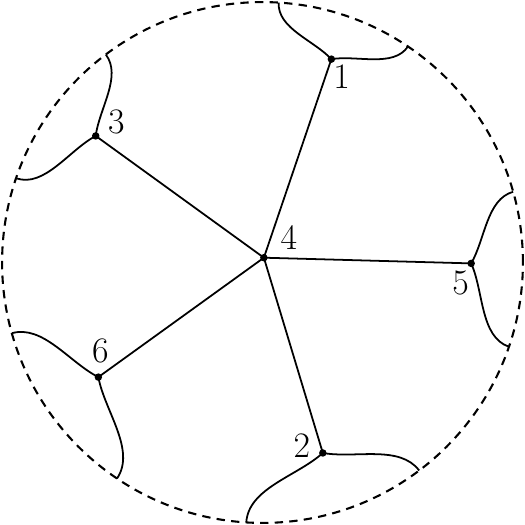}
        \par\smallskip
        (a) NBQP map
    \end{minipage}
    \hfill
    \begin{minipage}[t]{0.48\textwidth}
        \centering
        \includegraphics[width=0.7\linewidth]{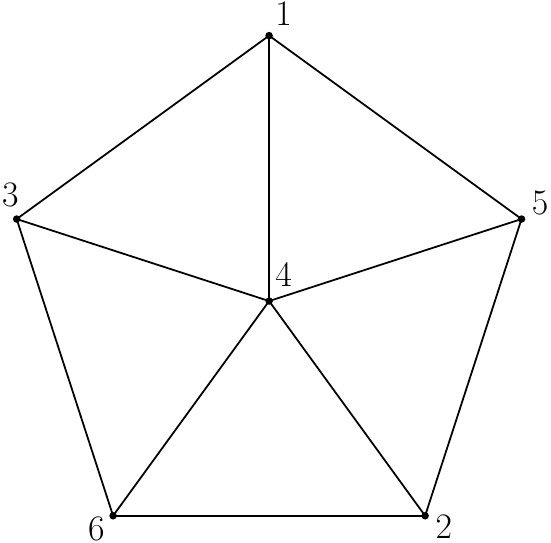}
        \par\smallskip
        (b) SISD map
    \end{minipage}

    \medskip

    \begin{minipage}[t]{0.48\textwidth}
        \centering
        \includegraphics[width=0.7\linewidth]{figcollection/cikk-n6-3-diam.pdf}
        \par\smallskip
        (c) Diameter framework
    \end{minipage}
    \hfill
    \begin{minipage}[t]{0.48\textwidth}
        \centering
        \includegraphics[width=0.7\linewidth]{figcollection/cikk-n6-3-reu.png}
        \par\smallskip
        (d) Reuleaux polyhedron
    \end{minipage}

    \refstepcounter{example}
    \label{fig:example-C}
    \caption*{Example \theexample. The unique separating 4-cycle free NBQP on 6 vertices and the corresponding 3-connected SISD map. The two graphs are isomorphic as abstract graphs, but this is just happenstance.}
\end{figure}

\begin{figure}[htbp]
    \centering

    \begin{minipage}[t]{0.48\textwidth}
        \centering
        \includegraphics[width=0.7\linewidth]{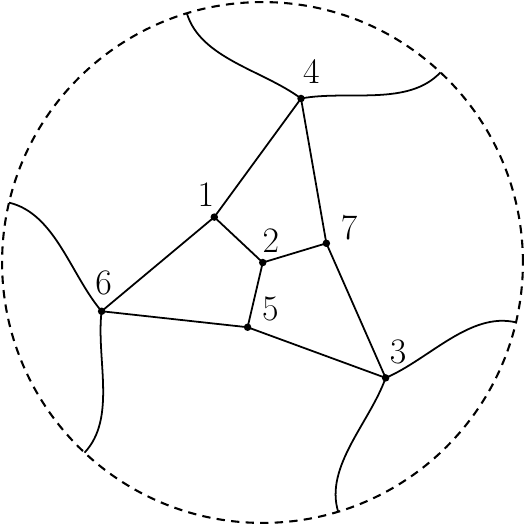}
        \par\smallskip
        (a) NBQP map
    \end{minipage}
    \hfill
    \begin{minipage}[t]{0.48\textwidth}
        \centering
        \includegraphics[width=0.7\linewidth]{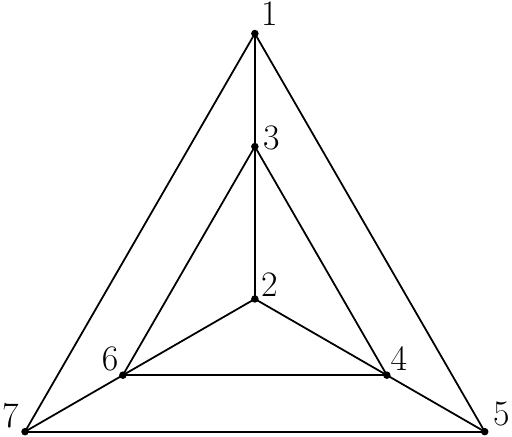}
        \par\smallskip
        (b) SISD map
    \end{minipage}

    \medskip

    \begin{minipage}[t]{0.48\textwidth}
        \centering
        \includegraphics[width=0.7\linewidth]{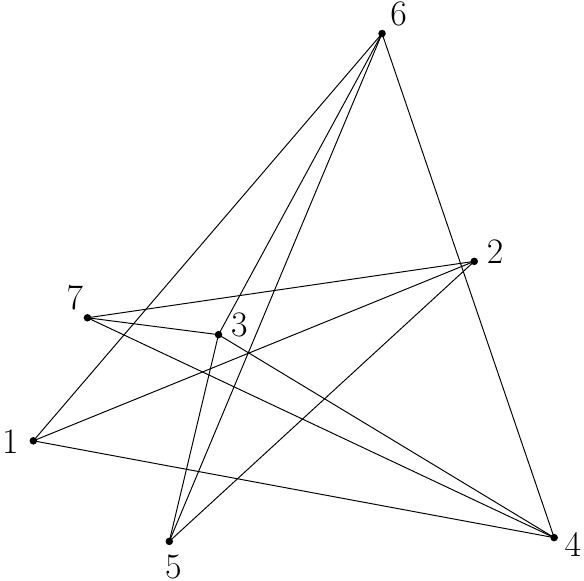}
        \par\smallskip
        (c) Diameter framework
    \end{minipage}
    \hfill
    \begin{minipage}[t]{0.48\textwidth}
        \centering
        \includegraphics[width=0.7\linewidth]{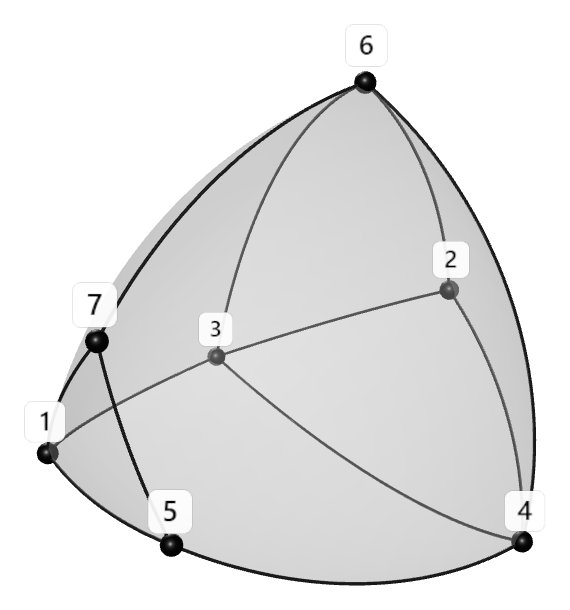}
        \par\smallskip
        (d) Reuleaux polyhedron
    \end{minipage}

    \refstepcounter{example}
    \label{fig:example-D}
    \caption*{Example \theexample. The unique separating 4-cycle free NBQP on 7 vertices and the corresponding 3-connected SISD map.}
\end{figure}

\begin{figure}[htbp]
    \centering

    \begin{minipage}[t]{0.48\textwidth}
        \centering
        \includegraphics[width=0.7\linewidth]{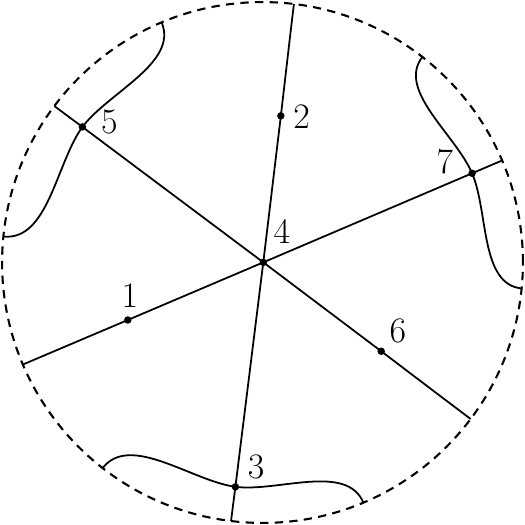}
        \par\smallskip
        (a) NBQP map
    \end{minipage}
    \hfill
    \begin{minipage}[t]{0.48\textwidth}
        \centering
        \includegraphics[width=0.7\linewidth]{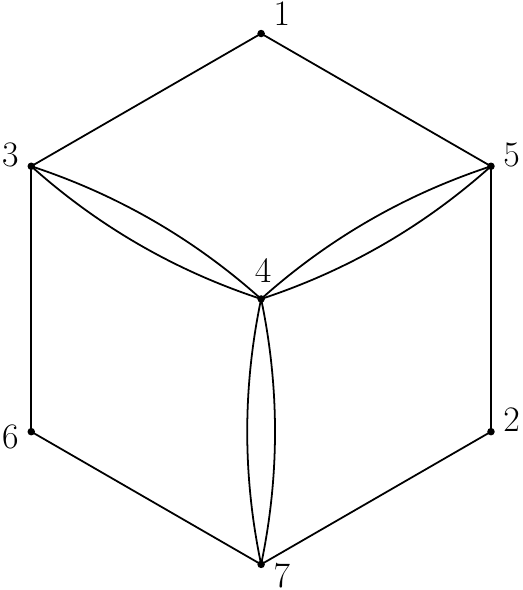}
        \par\smallskip
        (b) SISD map
    \end{minipage}

    \medskip

    \begin{minipage}[t]{0.48\textwidth}
        \centering
        \includegraphics[width=0.7\linewidth]{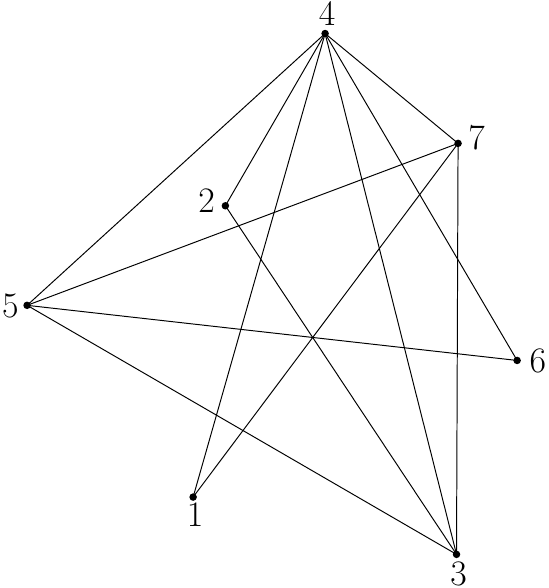}
        \par\smallskip
        (c) Diameter framework
    \end{minipage}
    \hfill
    \begin{minipage}[t]{0.48\textwidth}
        \centering
        \includegraphics[width=0.7\linewidth]{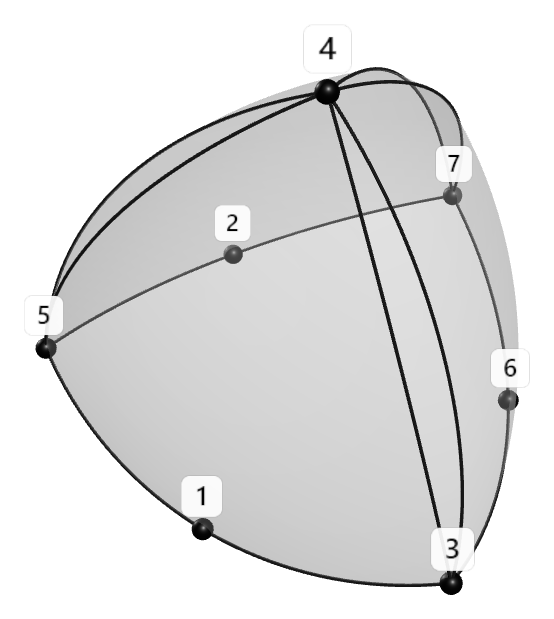}
        \par\smallskip
        (d) Reuleaux polyhedron
    \end{minipage}

    \refstepcounter{example}
    \label{fig:example-E}
    \caption*{Example \theexample. Another example on 7 vertices.}
\end{figure}

\begin{figure}[htbp]
    \centering

    \begin{minipage}[t]{0.48\textwidth}
        \centering
        \includegraphics[width=0.7\linewidth]{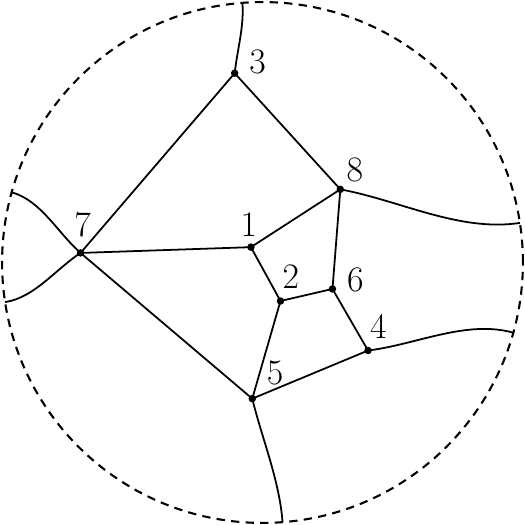}
        \par\smallskip
        (a) NBQP map
    \end{minipage}
    \hfill
    \begin{minipage}[t]{0.48\textwidth}
        \centering
        \includegraphics[width=0.7\linewidth]{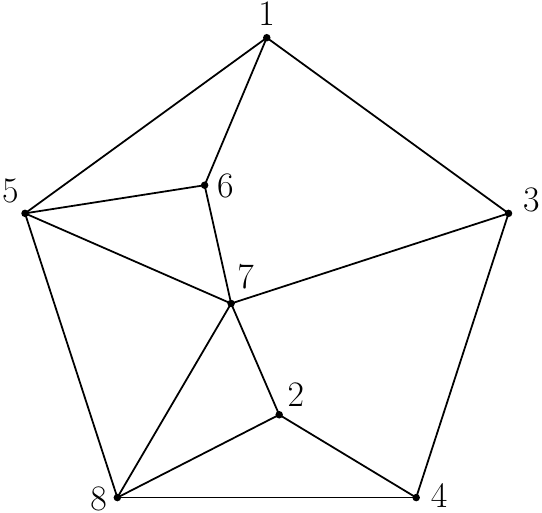}
        \par\smallskip
        (b) SISD map
    \end{minipage}

    \medskip

    \begin{minipage}[t]{0.48\textwidth}
        \centering
        \includegraphics[width=0.6\linewidth]{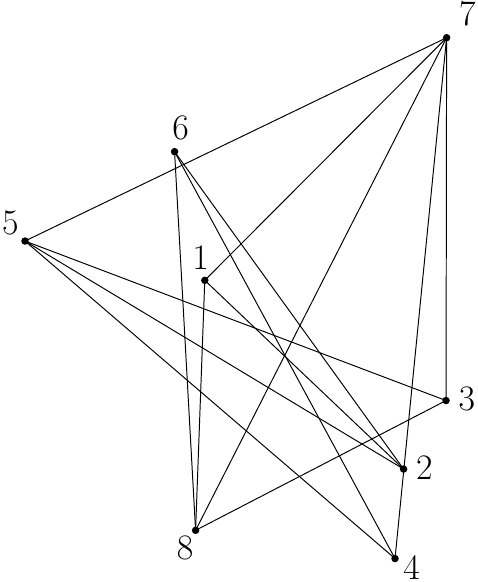}
        \par\smallskip
        (c) Diameter framework
    \end{minipage}
    \hfill
    \begin{minipage}[t]{0.48\textwidth}
        \centering
        \includegraphics[width=0.7\linewidth]{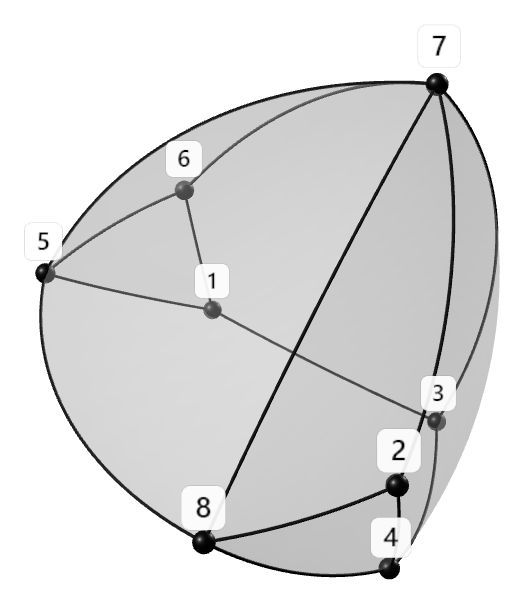}
        \par\smallskip
        (d) Reuleaux polyhedron
    \end{minipage}

    \refstepcounter{example}
    \label{fig:example-F}
    \caption*{Example \theexample. One of the two separating 4-cycle free NBQPs on 8 vertices. The other one is a wheel graph.}
\end{figure}

\begin{figure}[htbp]
    \centering

    \begin{minipage}[t]{0.48\textwidth}
        \centering
        \includegraphics[width=0.7\linewidth]{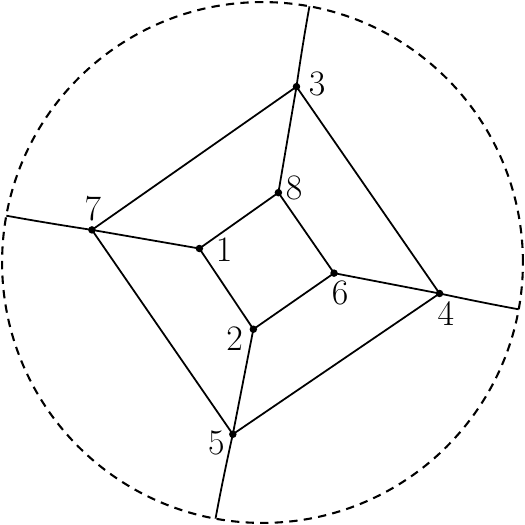}
        \par\smallskip
        (a) NBQP map
    \end{minipage}
    \hfill
    \begin{minipage}[t]{0.48\textwidth}
        \centering
        \includegraphics[width=0.7\linewidth]{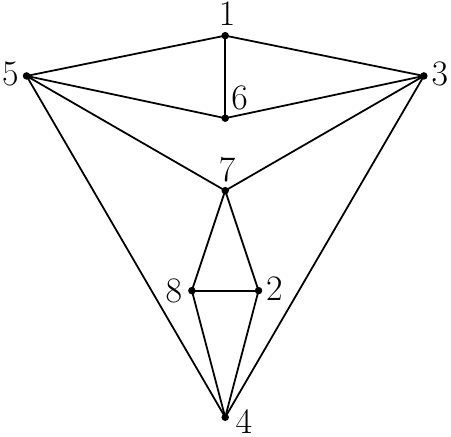}
        \par\smallskip
        (b) SISD map
    \end{minipage}

    \medskip

    \begin{minipage}[t]{0.48\textwidth}
        \centering
        \includegraphics[width=0.6\linewidth]{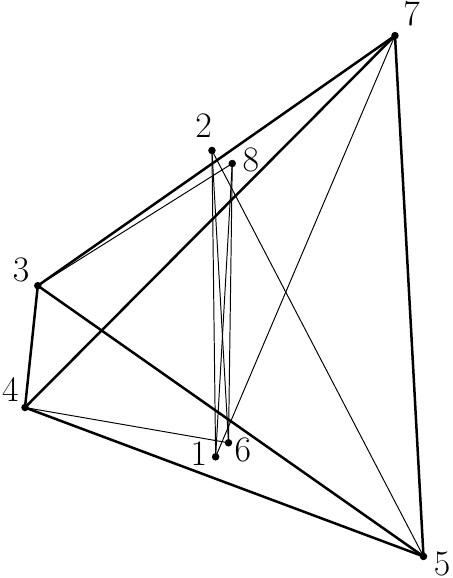}
        \par\smallskip
        (c) Diameter framework
    \end{minipage}
    \hfill
    \begin{minipage}[t]{0.48\textwidth}
        \centering
        \includegraphics[width=0.7\linewidth]{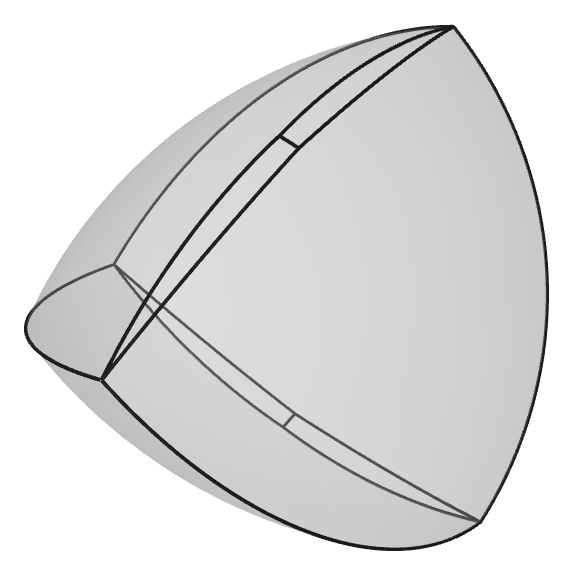}
        \par\smallskip
        (d) Reuleaux polyhedron
    \end{minipage}

    \refstepcounter{example}
    \label{fig:example-G}
    \caption*{Example \theexample. The smallest NBQP where the minimum degree is at least 3, but we do have a separating 4-cycle. The corresponding SISD map does not have parallel edges, but it is not 3-connected. The diameter framework is critical but not strongly critical.}
\end{figure}

\begin{figure}[htbp]
    \centering

    \begin{minipage}[t]{0.48\textwidth}
        \centering
        \includegraphics[width=0.7\linewidth]{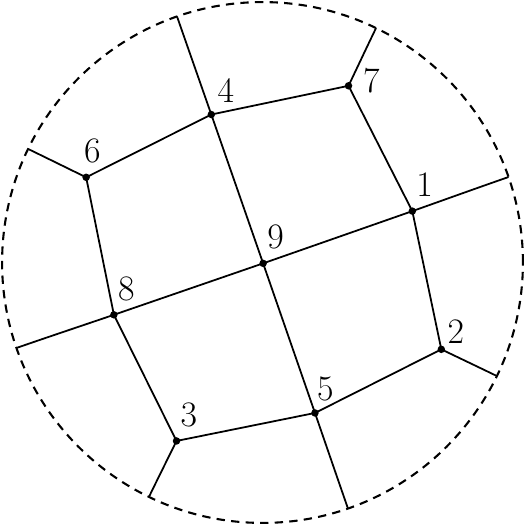}
        \par\smallskip
        (a) NBQP map
    \end{minipage}
    \hfill
    \begin{minipage}[t]{0.48\textwidth}
        \centering
        \includegraphics[width=0.7\linewidth]{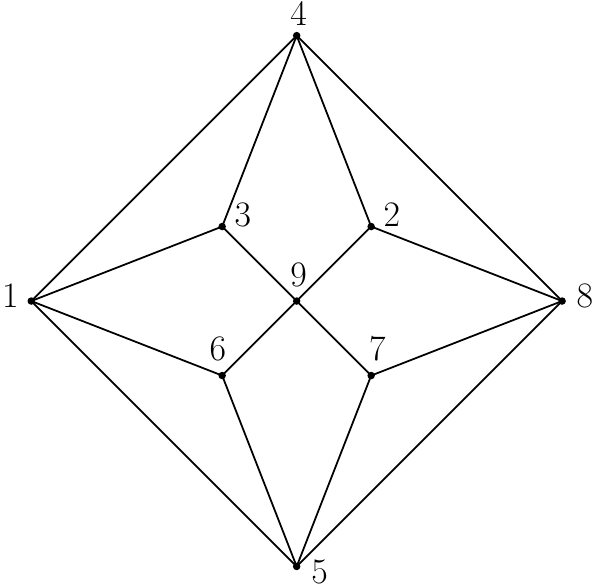}
        \par\smallskip
        (b) SISD map
    \end{minipage}

    \medskip

    \begin{minipage}[t]{0.48\textwidth}
        \centering
        \includegraphics[width=0.6\linewidth]{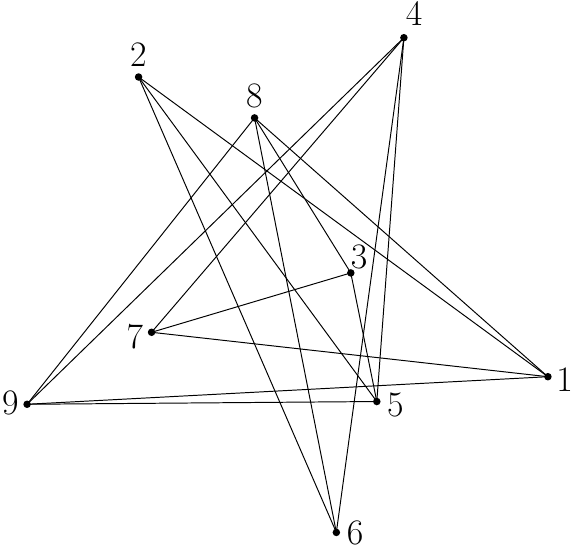}
        \par\smallskip
        (c) Diameter framework
    \end{minipage}
    \hfill
    \begin{minipage}[t]{0.48\textwidth}
        \centering
        \includegraphics[width=0.7\linewidth]{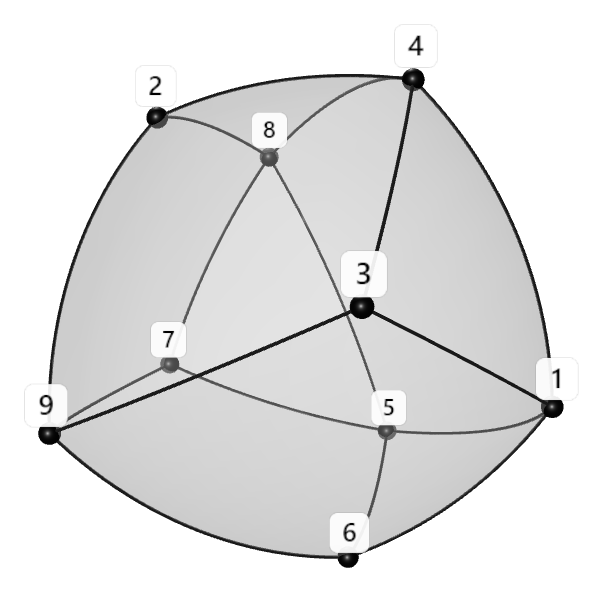}
        \par\smallskip
        (d) Reuleaux polyhedron
    \end{minipage}

    \refstepcounter{example}
    \label{fig:example-H}
    \caption*{Example \theexample. A highly symmetric example on 9 vertices.}
\end{figure}

\end{document}